\setlist[description]{font=\normalfont\scshape}
\xpatchcmd{\proof}{\itshape}{\normalfont\bfseries}{}{}
\newtheoremstyle{repeat}{}{}{\itshape}{}{\bfseries}{.}{.5em}{#3, repeated}
\newtheorem{theorem}{Theorem}[section]
\newtheorem{proposition}[theorem]{Proposition}
\newtheorem{lemma}[theorem]{Lemma}
\newtheorem{corollary}[theorem]{Corollary}
\newtheorem{fact}[theorem]{Fact}
\theoremstyle{definition}
\newtheorem{definition}[theorem]{Definition}
\newtheorem{remark}[theorem]{Remark}
\newtheorem{example}[theorem]{Example}
\newtheorem{question}[theorem]{Question}
\theoremstyle{repeat}
\newtheorem*{repeated-theorem}{Repeat}
\renewcommand{\L}{\mathcal{L}}
\newcommand{\MM}{\mathfrak{M}}
\newcommand{\Q}{\mathbb{Q}}
\newcommand{\OP}{$\mathsf{OP}$\xspace}
\newcommand{\TP}[1][]{\ifthenelse{\equal{#1}{}}{$\mathsf{TP}$}{$\mathsf{TP_{#1}}$}\xspace}
\newcommand{\SOP}[1][]{\ifthenelse{\equal{#1}{}}{$\mathsf{SOP}$}{$\mathsf{SOP_{#1}}$}\xspace}
\newcommand{\IP}{$\mathsf{IP}$\xspace}
\newcommand{\NOP}{$\mathsf{NOP}$\xspace}
\newcommand{\NTP}[1][]{\ifthenelse{\equal{#1}{}}{$\mathsf{NTP}$}{$\mathsf{NTP_{#1}}$}\xspace}
\newcommand{\NSOP}[1][]{\ifthenelse{\equal{#1}{}}{$\mathsf{NSOP}$}{$\mathsf{NSOP_{#1}}$}\xspace}
\newcommand{\NIP}{$\mathsf{NIP}$\xspace}
\DeclareMathOperator{\tp}{tp}
\DeclareMathOperator{\Aut}{Aut}
\DeclareMathOperator{\cf}{cf}
\renewcommand{\S}{\operatorname{S}}
\renewcommand{\d}{\operatorname{d}}
\renewcommand{\phi}{\varphi}
\newcommand{\equivls}{\equiv^\textup{Ls}}
\newcommand{\op}{{\textup{op}}}
\def\Ind#1#2{#1\setbox0=\hbox{$#1x$}\kern\wd0\hbox to 0pt{\hss$#1\mid$\hss}
\lower.9\ht0\hbox to 0pt{\hss$#1\smile$\hss}\kern\wd0}
\def\ind{\mathop{\mathpalette\Ind{}}}
\def\Notind#1#2{#1\setbox0=\hbox{$#1x$}\kern\wd0\hbox to 0pt{\mathchardef
\nn="3236\hss$#1\nn$\kern1.4\wd0\hss}\hbox to 0pt{\hss$#1\mid$\hss}\lower.9\ht0
\hbox to 0pt{\hss$#1\smile$\hss}\kern\wd0}
\def\nind{\mathop{\mathpalette\Notind{}}}
\title{Dividing Lines between Positive Theories}
\author{Anna Dmitrieva, Francesco Gallinaro and Mark Kamsma}
\thanks{The first author was supported by Engineering and Physical Sciences Research Council Studentship. The second author was supported by EPSRC grant EP/S017313/1, by a London Mathematical Society Early Career Fellowship, and by the program GeoMod ANR-19-CE40-0022-01 (ANR-DFG). The third author was supported by EPSRC grant EP/W522314/1.}
\address[Anna Dmitrieva]{School of Mathematics, University of East Anglia, UK}
\email[]{a.dmitrieva@uea.ac.uk}
\urladdr[Anna Dmitrieva]{}
\address[Francesco Gallinaro]{Mathematisches Institut, Albert-Ludwigs-Universit\"at Freiburg, Germany}
\email[]{francesco.gallinaro@mathematik.uni-freiburg.de}
\urladdr[]{https://fgallinaro.github.io/}
\address[Mark Kamsma]{Department of Mathematics, Imperial College London, UK}
\email[]{mark@markkamsma.nl}
\urladdr[]{https://markkamsma.nl}
\date{\today}
\subjclass{03C45 (Primary), 03C95, 03B20 (Secondary)}
\keywords{positive logic; dividing lines; classification theory; stability; simplicity; order property; tree property; strict order property; independence property}
\begin{document}

\maketitle

\begin{abstract}
We generalise the properties \OP, \IP, $k$-\TP, \TP[1], $k$-\TP[2], \SOP[1], \SOP[2] and \SOP[3] to positive logic, and prove various implications and equivalences between them. We also provide a characterisation of stability in positive logic in analogy with the one in full first-order logic, both on the level of formulas and on the level of theories. For simple theories there are the classically equivalent definitions of not having \TP and dividing having local character, which we prove to be equivalent in positive logic as well. Finally, we show that a thick theory $T$ has \OP iff it has \IP or \SOP[1] and that $T$ has \TP iff it has \SOP[1] or \TP[2], analogous to the well-known results in full first-order logic where \SOP[1] is replaced by \SOP in the former and by \TP[1] in the latter. Our proofs of these final two theorems are new and make use of Kim-independence.
\end{abstract}

\tableofcontents

\section{Introduction}
Model-theoretic dividing lines are used to measure how ``tame'' logical theories are. The most important such dividing lines can be formulated in terms of combinatorial properties. For example, a theory is stable if it does not have the order property. These various properties form an intricate diagram of implications and equivalences.

Positive logic is a generalisation of full first-order logic, and allows for the treatment of e.c.\ models of a non-companiable inductive theory \cite{haykazyan_existentially_2021}, hyperimaginaries (e.g.\ the $(-)^\text{heq}$ construction, see \cite[Subsection 10C]{dobrowolski_kim-independence_2022}), continuous logic \cite{ben-yaacov_model_2008} and more \cite{kamsma_bilinear_2023}. Some of these dividing lines have recently been studied in positive logic \cite{shelah_lazy_1975, pillay_forking_2000, ben-yaacov_simplicity_2003, haykazyan_existentially_2021, dobrowolski_kim-independence_2022, dobrowolski_amalgamation_2023}, and for some of them there is a positive version of the corresponding combinatorial property. However, these definitions and the implications between them that we know from full first-order logic are currently developed ad hoc, leaving gaps in the overall picture. For example, simplicity theory has been developed in positive logic \cite{pillay_forking_2000, ben-yaacov_simplicity_2003}, but simplicity in positive logic has so far only been defined in terms of local character for dividing and is nowhere equated to the usual definition of not having the tree property. The main goal of this paper is to provide the definitions of and implications between the most important dividing lines in terms of combinatorial properties, while also proving equivalences with other characterisations of these dividing lines.

\textbf{Main results.}
In full first-order logic stable formulas are characterised in various ways, for example as those that do not have have \OP (the \emph{order property}) or by counting types. We recover this characterisation in positive logic in \thref{thm:stable-formula}, tying together previous work on stability in positive logic from \cite{shelah_lazy_1975, ben-yaacov_simplicity_2003}. Subsequently, we obtain the usual equivalence of definitions for a stable theory in \thref{stable-theory}: either through type counting or by the lack of \OP.

Our first main result in the unstable setting is the implication diagram between the various combinatorial properties we consider. See the start of Section \ref{sec:implication-between-the-properties} for a discussion about the strictness of implications, and implications that are missing compared to full first-order logic.
\begin{theorem}
\thlabel{implication-diagram}
The following implications between properties hold for a positive theory $T$.
\[
% https://tikzcd.yichuanshen.de/#N4Igdg9gJgpgziAXAbVABwnAlgFyxMJZAZgBoBGAXVJADcBDAGwFcYkQBrAHS5xgA8cwALQ8AKgAVkAJkoBfEHNLpMufIRRkADNTpNW7Hn0GQATgFsmwaaK6SZ8xcpAZseAkQCspHTQYs2RBAjASEeAEkJBSUVN3UiclJZP31A4N5Q4HEpKmjnVzUPTSTdfwMg7gzBEWy82MKNZC1SYlLUwyqwrgBlAHkpVrqXVXdGxNaUgI7jLr6pWSGC0aJpFrapoJDqnjnkXKd65eKJvQ30mbNLRmtbSUWR+JRvE7K0ra7+6N0YKABzeCIoAAZqYIOYkIkQDgIEhmqdymBmIxGDRGPQAEYwRgSB5FECmLC-AAWOBAqKwYDSUAgzHRjDYNCJMHoUCQiORNBw9CwjHYkEpBxAILBsM5MMQABZJgikSiQGjMdjcRp5TAgaTyQKgtTafTBcLwYhIdCkKt4YF2XKFVicXE8fT1WT5RSqTS6WwYkLQYazSbEABOaUW2WojE25XsB0a51akA693671IMhQ8UANiDbJD8rDSrtKqjiZFkrFSAAHJnEJbQ4rbQ1I2rSZ6DUhvKmkBnzVmOTnaxGgoXNa7dR7nC3EJ2-QB2SvV3vh-MNx1D9jxvXNpOIGftxAVrtV7PWvP1geNp2MF2rt3rsebvd+wP7udHutHVXLmPDhNyeRAA
\begin{tikzcd}
                                       &                                        &                                        & {2\text{-\TP[2]}} \arrow[d, Rightarrow] \arrow[rr, Rightarrow] &  & \text{\IP} \arrow[ddd, Rightarrow] \\
                                       &                                        &                                        & {k\text{-\TP[2]}} \arrow[d, Rightarrow]                              &  &                                    \\
                                       & {\text{\TP[1]}} \arrow[d, Leftrightarrow]              &                                        & k\text{-\TP} \arrow[d, Leftrightarrow]                                               &  &                                    \\
{\text{\SOP[3]}} \arrow[r, Rightarrow] & {\text{\SOP[2]}} \arrow[r, Rightarrow] & {\text{\SOP[1]}} \arrow[r, Rightarrow] & 2\text{-\TP} \arrow[rr, Rightarrow]                            &  & \text{\OP}                        
\end{tikzcd}
\]
\end{theorem}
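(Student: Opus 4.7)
The plan is to prove each implication in the diagram separately, mostly by adapting the corresponding classical arguments from full first-order logic to the positive setting. The recurring subtlety is that negation is not available as a connective: wherever a classical proof uses $\neg\phi$ to express inconsistency, we have to work with a pair of positive formulas whose conjunction is inconsistent, and track both formulas carefully through the construction.

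First come the easy weakenings. The chain $\SOP[3] \Rightarrow \SOP[2] \Rightarrow \SOP[1]$ follows by restricting a witnessing configuration, since shorter forbidden cycles are automatically allowed once longer ones are. Similarly $2$-\TP[2] $\Rightarrow k$-\TP[2] is immediate, as $2$-inconsistency implies $k$-inconsistency of any $k$ elements. The implication $k$-\TP[2] $\Rightarrow k$-\TP is a structural reinterpretation of an array as a tree: from a \TP[2] array $(a_{i,j})_{i,j<\omega}$ one builds a $k$-\TP tree in $\omega^{<\omega}$ by placing the $n$-th row at depth $n$, so that siblings come from the same row (hence are $k$-inconsistent) and paths are transversals (hence are consistent). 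The implication $\SOP[1] \Rightarrow 2$-\TP falls out directly from the tree-based definition of $\SOP[1]$, whose defining pattern already packages a $2$-\TP tree.

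Next, $2$-\TP[2] $\Rightarrow $ \IP is obtained by applying Ramsey to the columns of a \TP[2] array to extract an indiscernible sequence along which transversals are consistent and rows are inconsistent; this alternation of consistency and inconsistency then directly witnesses \IP. The implications $2$-\TP $\Rightarrow $ \OP and \IP $\Rightarrow $ \OP are standard Shelah-style extractions: an indiscernible sequence produced by Ramsey on a \TP or \IP witness yields, after an alternation argument, an order property witness. The two biconditional equivalences drawn in the diagram, namely $k$-\TP with the property directly below it and $\TP[1]$ with the property directly below it, are more involved; the plan is, in each case, to use tree-Ramsey combinatorics together with compactness to extract an indiscernible tree from a given tree-configuration, with one direction of each equivalence being almost immediate and the other requiring a careful tree-reshaping.

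The main obstacle will be these biconditional equivalences, since they rely on tree-Ramsey-style indiscernibility results that must be set up carefully in the positive setting, where indiscernibility is naturally only up to Lascar strong type and parameters live in existentially closed models. Once the corresponding combinatorial lemmas are in place, the remaining implications are fairly direct translations of the first-order arguments, with the pair-of-positive-formulas trick replacing any explicit use of negation.
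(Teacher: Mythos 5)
Your list of easy implications is mostly fine ($2$-\TP[2] $\Rightarrow$ $k$-\TP[2], $k$-\TP[2] $\Rightarrow$ $k$-\TP, \SOP[2] $\Rightarrow$ \SOP[1], \SOP[1] $\Rightarrow$ $2$-\TP via the path-collapsing re-indexing, \IP $\Rightarrow$ \OP), but there are genuine gaps elsewhere. First, \SOP[3] $\Rightarrow$ \SOP[2] is not ``restricting a witnessing configuration'': in this paper \SOP[3] is defined at the level of theories by a \emph{linear} configuration $(a_i)_{i<\omega}$ with two formulas $\phi_0,\phi_1$ and a negation of $\exists x(\phi_0(x,y_2)\wedge\phi_1(x,y_1))$, while \SOP[2] is a binary-tree property; there are no ``cycles'' anywhere in these definitions. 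The implication requires an actual construction (adapted from Conant): by compactness pass to a $\Q$-indexed sequence, build a tree of \emph{pairs} $c_\eta=(b_q,b_r)$ indexed by $2^{<\omega}$ for the conjunction $\chi(x,y_1,y_2)=\phi_0(x,y_1)\wedge\phi_1(x,y_2)$, and verify that the disjunction $\psi(y_4,y_1)\vee\psi(y_2,y_3)$ is a negation of $\exists x(\chi(x,y_1,y_2)\wedge\chi(x,y_3,y_4))$. Nothing in your sketch produces this, and the ``shorter forbidden cycles'' heuristic does not apply to the definitions in use.

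Second, your plan for the two biconditionals rests on ``tree-Ramsey'' extraction of indiscernible trees, and this is where the proposal would fail. Such modelling/indiscernible-tree technology is exactly what the paper avoids because it is \emph{not} available off the shelf in positive logic (even the array version needed a separate preprint, cf.\ \thref{q:2-tp2-iff-k-tp2}; and already for sequences plain Ramsey extraction fails and must be replaced by Erd\H{o}s--Rado, \thref{erdos-rado-indiscernible-sequences}). Moreover you have misplaced the difficulty: \TP[1] $\Leftrightarrow$ \SOP[2] needs no indiscernibility at all --- it is the purely combinatorial embedding $h(\eta^\frown i)=h(\eta)^\frown(0)^{i\frown}1$ of $\omega^{<\omega}$ into $2^{<\omega}$ (\thref{proposition:sop2ifftp1}) --- whereas $k$-\TP $\Leftrightarrow$ $2$-\TP is the hard step, proved in \thref{thm:tp-implies-2-tp} by adapting Shelah's III.7.7 argument (Erd\H{o}s--Rado plus repeated pigeonhole on the data $(v_\alpha,\psi_\alpha)$), and it only yields $2$-\TP for a \emph{conjunction} $\phi(x,y_1)\wedge\ldots\wedge\phi(x,y_{k'})$, so the equivalence holds at the level of theories rather than formulas --- a point your sketch does not register. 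Smaller deviations: the paper proves $2$-\TP[2] $\Rightarrow$ \IP by the direct pairing trick $b_i=a_{i,0}a_{i,1}$ with $\chi(z_1z_2,t):=\phi(t,z_2)$ and negation $\phi(t,z_1)\wedge\psi(z_1,z_2)$, with no indiscernibility, which is precisely how the uniform-negation issue you wave at is solved; and $2$-\TP $\Rightarrow$ \OP goes through type-counting (\thref{prop:tp-implies-many-types}) combined with \thref{thm:stable-formula}, since in positive logic ``many $\phi$-types implies \OP'' is itself the nontrivial \thref{lem:nop-implies-stable}, not a routine alternation extraction.
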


Like stable theories, simple theories can be defined in different ways, which are equivalent in full first-order logic. This includes defining  simplicity in terms of local character for dividing, as is done in previous studies of simplicity in positive logic \cite{ben-yaacov_simplicity_2003}, or as those theories not having \TP. We prove that these are equivalent in positive logic as well in \thref{dividing-local-character-iff-ntp}.

Finally, we recall the following two famous theorems from full first-order logic. Here \SOP stands for the \emph{strict order property}, a property that we do not consider in this paper but implies \SOP[3] (see also \thref{rem:sop}).
\begin{theorem}[{\cite[Theorem II.4.7]{shelah_classification_1990}}]
\thlabel{thm:first-order-op-iff-ip-or-sop}
A full first-order theory $T$ has \OP iff it has \IP or \SOP.
\end{theorem}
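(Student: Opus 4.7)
The plan is to prove both directions separately. The forward direction, that \IP or \SOP implies \OP, is easy: an \IP witness produced by $\phi(x,y)$ with sequences $(a_i)_{i<\omega}$ and $(b_S)_{S\subseteq\omega}$ where $\models\phi(a_i,b_S) \iff i\in S$ restricts, on taking $S$ to range over initial segments of $\omega$, to an explicit \OP witness for $\phi$; and an \SOP witness gives an infinite chain in a definable partial preorder, which is itself an instance of \OP. So the real content is the converse.

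For the converse I would assume $T$ has \OP and is \NIP, and derive \SOP. Let $\phi(x,y)$ together with sequences $(a_i)_{i<\omega}$, $(b_j)_{j<\omega}$ witness \OP, so $\models\phi(a_i,b_j)\iff i\le j$. By Ramsey's theorem plus compactness, I replace these by an indiscernible sequence of pairs $((a_i,b_i))_{i\in\mathbb{Q}}$ that still witnesses the same order property. Then I consider the auxiliary formula
\[
\psi(y,y') \;:=\; \exists x\,\bigl(\phi(x,y)\wedge\neg\phi(x,y')\bigr),
\]
and note that $\models\psi(b_j,b_k) \iff j<k$, which already produces an infinite chain for $\psi$. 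The remaining task is to upgrade $\psi$ to a formula that genuinely defines a partial order — irreflexivity is immediate from the definition, but transitivity is not.

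The main obstacle is producing transitivity, and this is precisely where \NIP is used. Shelah's alternation bound for \NIP formulas says that the truth value of $\phi(c,b_i)$, as $b_i$ ranges over an indiscernible sequence, can change only boundedly often; equivalently, the trace of the definable family $\{\phi(c,y)\}$ on the indiscernible sequence is linearly ordered up to bounded perturbation. Applying this to $\phi(x,b_i)$ along the $\mathbb{Q}$-indexed indiscernible sequence, the definable sets $\phi(x,b_i)(\mathfrak{C})$ become essentially linearly nested, so the failure of transitivity of $\psi$ can be repaired by replacing $\psi$ with a Boolean refinement (for instance, requiring strict inclusion witnessed by a bounded number of alternation points, or iterating $\psi$ a fixed number of times) that is transitive on the tuples coming from the indiscernible sequence while retaining the infinite chain $(b_i)$.

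Putting this together, the refined formula is an \SOP-witness, completing the converse. The cleanest formulation is essentially Shelah's in \cite[Theorem II.4.7]{shelah_classification_1990}; the non-trivial bookkeeping is concentrated in the alternation-bound step, and the remaining ingredients (Ramsey, compactness, extraction of indiscernibles, and the passage $\phi\leadsto\psi$) are routine.
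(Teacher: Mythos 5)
You should first note that the paper does not prove this statement at all: it is quoted from Shelah (Theorem II.4.7 of \emph{Classification Theory}) purely as a point of comparison for \thref{thm:op-iff-ip-or-sop1}, whose proof in Section \ref{sec:interactions-with-independence-relations} concerns a genuinely different statement (positive logic, \SOP replaced by \SOP[1]) and proceeds by Kim-independence and stationarity. So your sketch can only be judged against the classical first-order argument, not against anything in this paper.

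Judged that way, the easy direction (\IP or \SOP implies \OP) is fine, but the converse has a genuine gap exactly at the step you defer as ``bookkeeping''. Two concrete problems. First, the asserted equivalence $\models\psi(b_j,b_k)\iff j<k$ for $\psi(y,y'):=\exists x(\phi(x,y)\wedge\neg\phi(x,y'))$ is only half-justified: witnesses taken from the sequence $(a_i)$ give one implication, but nothing rules out other elements of the monster witnessing $\psi$ in the reverse direction, and it is precisely that failure which \SOP requires. Second, and more seriously, \NIP does not say that the family $\{\phi(\MM,b_i)\}_{i}$ is ``essentially linearly nested'': it only bounds, for each fixed $c$, the number of alternations of the truth value of $\phi(c,b_i)$ along the indiscernible sequence (the formula $x=y$ along any non-constant indiscernible sequence is \NIP and as far from nested as possible). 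Consequently neither of your proposed repairs --- a Boolean refinement of $\psi$, or iterating $\psi$ a fixed number of times --- is shown, or obviously true, to produce a transitive relation with an infinite strict chain. The standard way to close the gap is different in kind: take $n$ maximal (by \NIP) such that some $c$ alternates $n$ times along $(b_q)_{q\in\Q}$, fix parameters from the sequence realising the first $n-1$ alternations, and define $y\le y'$ by the universal formula saying that every $x$ realising that fixed alternation pattern satisfies $\phi(x,y)\to\phi(x,y')$. Transitivity is then automatic, since this is a relativised trace-inclusion, and the maximality of $n$ is what yields strictness along the $b_q$'s; this maximality argument is the actual content of Shelah's theorem, and it is missing from your proposal.
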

\begin{theorem}[{\cite[Theorem III.7.11]{shelah_classification_1990}\footnote{Gaps in this proof have been filled in in \cite[Theorem 5.9]{kim_tree_2014}.}}]
\thlabel{thm:first-order-tp-iff-tp1-or-tp2}
A full first-order theory $T$ has \TP iff it has \TP[1] or \TP[2].
\end{theorem}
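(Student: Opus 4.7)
The plan is to handle the two directions of the biconditional separately. The direction from \TP[1] or \TP[2] to \TP is immediate from the implication diagram in \thref{implication-diagram}: since \TP[1] is equivalent to $k$-\TP and $k$-\TP[2] implies $k$-\TP, any tree witnessing \TP[1] or \TP[2] automatically witnesses \TP (understood as $k$-\TP for some $k$).

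For the substantive direction, assume $T$ has \TP witnessed by a formula $\phi(x,y)$ and that no formula has \TP[2]; I would then produce a tree witnessing \TP[1] for some formula. First I would extract, via a Ramsey/compactness argument on trees, a strongly indiscernible tree $(a_\eta)_{\eta\in\omega^{<\omega}}$ witnessing $k$-\TP for $\phi$, with consistent paths and $k$-inconsistent antichains. The absence of \TP[2] imposes a rigid constraint on indiscernible arrays: no row of such a tree can be \emph{broadly} inconsistent without producing a \TP[2]-pattern. The core step is then to show that, under \NTP[2], incomparable pairs in the tree can be made $2$-inconsistent, not merely $k$-inconsistent: if too many pairs $\phi(x,a_\eta) \wedge \phi(x,a_\nu)$ were simultaneously consistent, combining this with the $k$-inconsistency along a row would reconstruct a \TP[2]-pattern for $\phi$ or for an auxiliary conjunction of instances, contradicting the hypothesis.

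The main obstacle is precisely this refinement from $k$-inconsistency on antichains to $2$-inconsistency on antichains, which is the content of the famous gap in Shelah's original proof filled by Kim in \cite[Theorem 5.9]{kim_tree_2014}. The subtlety lies in simultaneously controlling path-consistency and antichain-inconsistency while passing to an indiscernible subtree, and in choosing the right intermediate notion of \emph{strong} tree property to mediate between the two. Following Kim's approach, one would use an array-extraction argument to move between tree and sequence patterns, iteratively refining the tree so that antichain-inconsistency is eventually witnessed by pairs. A more modern route via Kim-independence, of the sort the authors indicate they use for the positive analogue, could streamline this step by replacing combinatorial tree-surgery with independence-theoretic arguments.
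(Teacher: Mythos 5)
You should first note that the paper itself gives no proof of this statement: it is quoted directly from \cite[Theorem III.7.11]{shelah_classification_1990} with the gaps filled by \cite[Theorem 5.9]{kim_tree_2014}, and the authors' own contribution is the positive-logic variant \thref{thm:tp-iff-sop1-or-tp2}, proved by an entirely different, Kim-independence-theoretic method. So the question is whether your sketch stands on its own as a proof, and it does not. The easy direction is fine, although your justification is slightly off: the diagram gives \TP[1] $\Leftrightarrow$ \SOP[2] $\Rightarrow$ \SOP[1] $\Rightarrow$ $2$-\TP and $k$-\TP[2] $\Rightarrow$ $k$-\TP, not ``\TP[1] is equivalent to $k$-\TP''.

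For the substantive direction your outline correctly identifies the shape of the classical argument (extract a suitably indiscernible tree witnessing $k$-\TP, then run a dichotomy: either incomparable nodes can be made pairwise inconsistent, giving \TP[1], or consistency among incomparable nodes combines with inconsistency along rows to build a \TP[2]-array), but it stops exactly where the work begins. The extraction step is not a routine ``Ramsey/compactness argument on trees'' but the modelling property for strongly indiscernible trees, itself a nontrivial theorem; and the upgrade from $k$-inconsistency along siblings to $2$-inconsistency on all incomparable pairs under \NTP[2] is precisely the content of \cite[Theorem 5.9]{kim_tree_2014}, which you name and defer to rather than prove. The sentence ``if too many pairs were simultaneously consistent \ldots would reconstruct a \TP[2]-pattern for $\phi$ or for an auxiliary conjunction'' is the heart of the proof and needs an actual argument: which formula, which array, and why the rows are uniformly inconsistent while each column-choice remains consistent. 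Finally, the suggested ``more modern route via Kim-independence'' does not directly yield the stated theorem either: the paper's \thref{thm:tp-iff-sop1-or-tp2} gives \TP iff \SOP[1] or \TP[2], and recovering \TP[1] in place of \SOP[1] requires Mutchnik's result that \SOP[1] and \SOP[2] coincide in full first-order logic \cite{mutchnik_nsop_2_2022} together with \thref{proposition:sop2ifftp1}, as the authors themselves point out in \thref{rem:two-famous-theorems-comparison}. So as written your proposal is a correct road map with the central lemma outsourced, not a proof.
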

We will prove the following versions of these theorems for positive logic.
\begin{theorem}
\thlabel{thm:op-iff-ip-or-sop1}
A thick theory $T$ has \OP iff it has \IP or \SOP[1]. Equivalently: $T$ is stable iff it is \NIP and \NSOP[1].
\end{theorem}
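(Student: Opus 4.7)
The plan is to handle the two directions separately. For the backward direction (\IP or \SOP[1] implies \OP) it suffices to invoke \thref{implication-diagram}: both the chains \IP $\Rightarrow$ \OP and \SOP[1] $\Rightarrow$ 2-\TP $\Rightarrow$ \OP appear there.

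For the forward direction I proceed by contraposition: assume that $T$ is thick, \NIP, and \NSOP[1], and aim to show that $T$ is stable. The plan is to exploit the Kim-independence machinery for thick \NSOP[1] positive theories (as in \cite{dobrowolski_kim-independence_2022}), namely symmetry and extension of Kim-independence over existentially closed models, existence of Kim-Morley sequences in every complete type over an e.c.\ model, and Kim's Lemma. Suppose, for contradiction, that some positive formula $\phi(x, y)$ has \OP. Using thickness, I would extract a Lascar-indiscernible sequence witnessing \OP and, passing over an e.c.\ model $M$, obtain an $M$-indiscernible sequence $(b_i)_{i < \omega}$ together with a tuple $a$ whose $\phi$-pattern on $(b_i)$ is strictly ordered. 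I would then promote $(b_i)$ to a Kim-Morley sequence in $\tp(b_0/M)$; by Kim's Lemma it still witnesses the order against a suitable realisation.

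The central step is to show that under \NIP and \NSOP[1] (with thickness) any such Kim-Morley sequence must be an indiscernible set, which would contradict the strict linear order present in the $\phi$-pattern. Classically (Shelah II.4.7) this step uses the negation of $\phi$ to count alternations along the sequence and extract a shattering family. In positive logic negation is unavailable, so I would replace it by separating positive formulas between distinct Lascar strong types---available by thickness---and combine this with symmetry of Kim-independence to transport ``forward'' order instances to ``backward'' ones, producing the \IP pattern that contradicts \NIP.

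The main obstacle is precisely this translation from the classical argument: Shelah's proof depends on Boolean manipulations of $\phi$ and $\neg \phi$, which here must be replaced by Lascar-type separation and by the structural properties of Kim-independence. This is where the assumptions of thickness and \NSOP[1] do the essential work that \NSOP[] did in Shelah's original version.
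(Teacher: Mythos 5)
Your backward direction is exactly the paper's (quote \thref{implication-diagram}), but the forward direction has a genuine gap at precisely the point you flag as the ``central step''. Two problems. First, the reduction itself: your plan is to show that under \NIP and \NSOP[1] every $\ind^K_M$-Morley sequence is an indiscernible set; no argument is given for this, and the claim is essentially equivalent to the stability you are trying to prove, so as stated the plan is circular at its core. The suggested replacement for Shelah's alternation counting (``separating positive formulas between Lascar strong types plus symmetry of Kim-independence'') does not produce the configuration you need: to witness \IP you must manufacture parameters $(c_\sigma)_{\sigma \in 2^\omega}$ realizing all patterns along the sequence simultaneously, and symmetry alone cannot do that --- the tool that does is the \textsc{Independence Theorem} of \thref{fact:kim-pillay-nsop1}, applied inductively along an $\ind^K_M$-Morley sequence, and your sketch never invokes it. Second, the transfer step ``promote $(b_i)$ to a Kim--Morley sequence; by Kim's Lemma it still witnesses the order'' is unjustified: Kim's Lemma concerns Kim-dividing of a single formula over $M$ along different Morley sequences, whereas the \OP pattern is not a dividing configuration, and a Kim--Morley sequence in $\tp(b_0/M)$ need not realize the same type over $M$ (as a sequence) as the original indiscernible sequence, so the ordered $\phi$-pattern is not automatically preserved. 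Moreover, the paper deliberately avoids Kim-dividing and Kim's Lemma altogether, relying only on the axiomatic characterization in \thref{fact:kim-pillay-nsop1}.

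For contrast, the paper's unstable case never touches an \OP formula directly. It first proves (\thref{prop:stable-iff-stationarity}, resting on \thref{fact:stable-iff-stationary-types}) that a thick \NSOP[1] theory is stable iff $\ind^K$ satisfies \textsc{Stationarity} over $\lambda_T$-saturated e.c.\ models. Instability then gives $a_0 \equiv_M a_1$ with $a_0 \ind^K_M b$, $a_1 \ind^K_M b$ but $a_0 \not\equiv_{Mb} a_1$; writing $p_0 = \tp(a_0 b/M)$ and $p_1 = \tp(a_1 b/M)$, these are separated by a formula $\phi \in p_1$ and a negation $\psi \in p_0$, and the \textsc{Independence Theorem} along an $\ind^K_M$-Morley sequence $(b_i)_{i<\omega}$ with $b_0 = b$ (built via \thref{nsop1-construct-kim-morley-sequence}) yields, after compactness, realizations $a_\sigma \models \bigcup_{i<\omega} p_{\sigma(i)}(x, b_i)$ for all $\sigma \in 2^\omega$ --- which is exactly the \IP pattern, with $\psi$ as the uniform negation. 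If you want to salvage your outline, the missing idea is to trade your \OP witness for this failure of \textsc{Stationarity} and then run the amalgamation construction; without that, the core of the argument is absent.
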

\begin{theorem}
\thlabel{thm:tp-iff-sop1-or-tp2}
A thick theory $T$ has \TP iff it has \SOP[1] or \TP[2]. Equivalently: $T$ is simple iff it is \NSOP[1] and \NTP[2].
\end{theorem}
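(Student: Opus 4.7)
The forward direction is immediate from \thref{implication-diagram}: both $\SOP[1]\Rightarrow 2\text{-}\TP\Rightarrow\TP$ and $k\text{-}\TP[2]\Rightarrow k\text{-}\TP\Rightarrow\TP$ are recorded there. So the real content is the contrapositive of the other direction: assuming $T$ is thick, \NSOP[1], and \NTP[2], we must show that $T$ is simple. By \thref{dividing-local-character-iff-ntp}, it is enough to show that dividing has local character over existentially closed models.

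My plan is to mimic the modern Kim-independence proof of \thref{thm:first-order-tp-iff-tp1-or-tp2} in full first-order logic and transfer it to positive logic using the Kim-independence machinery developed for positive theories (e.g., \cite{dobrowolski_kim-independence_2022,dobrowolski_amalgamation_2023}). Thickness will be used throughout to guarantee that Lascar-type-definable equivalence relations behave well and that Morley sequences exist in the positive setting. First, I would record that in a thick \NSOP[1] theory, Kim-independence $\ind^K$ over e.c.\ models is symmetric, has extension, and satisfies the independence theorem (3-amalgamation of Lascar types) — these are exactly the tools established positively in the cited works, and they are the key leverage against \SOP[1].

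Next, I would use the \NTP[2] hypothesis to bridge Kim-dividing and ordinary dividing. The natural route is: over an e.c.\ model $M$, a formula $\phi(x,b)$ Kim-divides iff it divides. The $\Leftarrow$ direction is automatic; for $\Rightarrow$, assuming $\phi(x,b)$ divides witnessed by an $M$-indiscernible sequence $(b_i)$, one invokes the \NTP[2] hypothesis to extract (via a standard array/Ramsey argument adapted to positive formulas) a Morley sequence in $\tp(b/M)$ witnessing the same inconsistency, hence Kim-dividing. Once Kim-independence coincides with non-dividing independence over models, the symmetry and independence theorem of Kim-independence translate directly into symmetry and amalgamation for dividing; combined with the already-available extension property over e.c.\ models, a standard counting-of-types/Morley-sequence argument then yields local character for dividing, giving \NTP.

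The main obstacle I anticipate is step two: pushing the ``\NTP[2] $\Rightarrow$ dividing equals Kim-dividing'' argument through positively. In full first-order logic this goes via Chernikov–Kaplan-style array constructions and compactness, but in positive logic compactness is replaced by positive compactness and one must be careful that all auxiliary indiscernible arrays live in e.c.\ models and that the formulas used are positive. Thickness is what lets one extract indiscernible subsequences with the right Lascar-type control, so I expect the proof to hinge on a careful positive analogue of the Chernikov–Kaplan ``broom/array'' lemma rather than on any genuinely new combinatorial idea. Once that bridge is in place, the rest follows by routine adaptation of the classical Kim-independence argument.
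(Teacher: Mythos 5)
Your overall skeleton (reduce to comparing dividing with Kim-dividing over e.c.\ models, using the positive Kim-independence machinery and thickness) is in the right spirit, but the load-bearing step of your plan is exactly the part you leave unproved, and the route you sketch for it is the one the paper deliberately avoids. You propose to show ``\NTP[2] $\Rightarrow$ dividing equals Kim-dividing over e.c.\ models'' by a positive-logic analogue of the Chernikov--Kaplan array/broom argument. That is a substantial theorem in its own right: in full first-order logic it rests on strictly invariant Morley sequences, global invariant extensions and the broom lemma, none of which is available off the shelf in the positive setting (and none of which is developed in the sources you cite); asserting that ``a standard array/Ramsey argument adapted to positive formulas'' will deliver it is a genuine gap, not a routine adaptation. (There is also a small slip in bookkeeping: the automatic implication is that Kim-dividing implies dividing, since a Kim-Morley sequence is in particular an indiscernible sequence; the substantive direction is the converse. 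Likewise, ``local character over e.c.\ models'' is not literally \thref{def:simple-theory}; you should pass through \thref{fact:kim-pillay-simple}, where this is harmless because non-dividing always satisfies \textsc{Base-Monotonicity}.)

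The paper sidesteps the Chernikov--Kaplan machinery entirely by using the \NSOP[1] hypothesis where you try to use \NTP[2]: it proves the contrapositive of your bridge. If $T$ is thick, \NSOP[1] and not simple, then by \thref{fact:kim-pillay-nsop1} and \thref{fact:kim-pillay-simple} the only axiom $\ind^K$ can fail is \textsc{Base-Monotonicity}, which $\ind^d$ always has, so $\ind^d \neq \ind^K$ and one finds $a, b, M$ with $a \ind^K_M b$ but $a \nind^d_M b$. Taking an $M$-indiscernible dividing witness $J = (b_j)_{j<\omega}$ with inconsistency uniformly witnessed by a negation $\psi$ of $\exists x(\phi(x,y_1)\wedge\ldots\wedge\phi(x,y_k))$, and spreading $J$ along an $\ind^K_M$-Morley sequence of copies $(J_i)_{i<\omega}$ (\thref{nsop1-construct-kim-morley-sequence}), the rows are $\psi$-inconsistent because each $J_i$ is an $M$-automorphic copy of $J$, while every path is consistent by iterated use of the \textsc{Independence Theorem} together with the Lascar-strong-type control that thickness provides. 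This exhibits $k$-\TP[2] directly, which is precisely the contrapositive statement you need, obtained without any \NTP[2]-internal array combinatorics. If you want to salvage your write-up, replace your step two by this construction (i.e.\ use the independence theorem of $\ind^K$ to build the \TP[2] array from a failure of $\ind^d = \ind^K$) rather than attempting a positive Chernikov--Kaplan lemma; your final step then also simplifies, since once $\ind^d = \ind^K$ simplicity follows immediately from \thref{fact:kim-pillay-simple}.
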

For an in-depth discussion about why we use \NSOP[1] we refer to \thref{rem:two-famous-theorems-comparison}. It is worth mentioning however that our proofs are completely different from the proofs of the original two theorems. Using recent developments on Kim-independence in \NSOP[1] theories we give proofs based on independence relations. Thickness is a mild assumption that is automatically satisfied in full first-order logic, see also \thref{def:thickness} and the discussion before it.

\textbf{Overview.} We start with the basics for positive logic in Section \ref{sec:preliminaries}. We deal with the different characterisations of stable formulas and stable theories in Section \ref{sec:positive-stability}. We then collect all the definitions of the various combinatorial properties we consider in Section \ref{sec:definitions}. In Section \ref{sec:implication-between-the-properties} we prove the implications between the various properties, i.e.\ we prove \thref{implication-diagram}. In Section \ref{sec:interactions-with-independence-relations} we consider interactions between independence relations and some of the combinatorial properties, obtaining the equivalence of definitions for a simple theory and proving \thref{thm:op-iff-ip-or-sop1} and \thref{thm:tp-iff-sop1-or-tp2}. Finally, Section \ref{sec:discussion-and-open-questions} discusses and asks some natural questions.

\textbf{Acknowledgements.} We would like to thank Jonathan Kirby for many useful discussions. We would also like to thank the anonymous referee for their comments that helped improve this paper.

\section{Preliminaries of positive logic}
\label{sec:preliminaries}
We only recall the definitions and facts about positive logic that we need, for a more extensive treatment and discussion see \cite{ben-yaacov_positive_2003, poizat_positive_2018} and for a more survey-like overview see \cite[Section 2]{dobrowolski_kim-independence_2022}.
\begin{definition}
\thlabel{def:positive-syntax}
Fix a signature $\L$. A \emph{positive formula} in $\L$ is one that is obtained from combining atomic formulas using $\wedge$, $\vee$, $\top$, $\bot$ and $\exists$. An \emph{h-inductive sentence} is a sentence of the form $\forall x(\phi(x) \to \psi(x))$, where $\phi(x)$ and $\psi(x)$ are positive existential formulas. A \emph{positive theory} is a set of h-inductive sentences.
\end{definition}
Whenever we say ``formula'' or ``theory'' we will mean ``positive formula'' and ``positive theory'' respectively, unless explicitly stated otherwise. This also means that every formula and theory we consider will be implicitly assumed to be positive.
\begin{remark}
\thlabel{rem:morleyisation}
We can study full first-order logic as a special case of positive logic. This is done through a process called \emph{Morleyisation}. For this we add a relation symbol $R_\phi(x)$ to our language for every full first-order formula $\phi(x)$. Then we have our theory (inductively) express that $R_\phi(x)$ and $\phi(x)$ are equivalent. This way every first-order formula is (equivalent to) a relation symbol, and thus in particular to a positive existential formula.
\end{remark}
We are generally only interested in \emph{existentially closed} models. These can be characterised in various ways, but the one that matters for us is the following.
\begin{definition}
\thlabel{def:negation}
A \emph{negation} of a formula $\phi(x)$ is a formula $\psi(x)$ such that $T \models \neg \exists x(\phi(x) \wedge \psi(x))$. Equivalently, $\psi(x)$ implies $\neg \phi(x)$ modulo $T$.
\end{definition}
\begin{definition}
\thlabel{def:existentially-closed}
We call a model $M$ of a theory $T$ \emph{existentially closed} or \emph{e.c.}\ if whenever $M \not \models \phi(a)$ then there is a negation $\psi(x)$ of $\phi(x)$ with $M \models \psi(a)$
\end{definition}
Following our earlier convention about dropping the ``positive'' everywhere, a (positive) \emph{type} will be a set of (positive) formulas, over some parameter set $B$, satisfied by some tuple $a$ in some e.c.\ model $M$:
\[
\tp(a/B) = \{ \phi(x, b) : M \models \phi(a, b) \text{ and } b \in B \}.
\]
Throughout we will assume that our theories have the \textit{joint continuation property} or JCP (that is, for any two models $M_1$ and $M_2$ there is a model $N$ with homomorphisms $M_1 \rightarrow N \leftarrow M_2$). This is the positive version of working in a complete theory, and we can always extend a theory $T$ to a theory with JCP by taking the set of all h-inductive sentences that are true in some e.c.\ model of $T$. Under the JCP assumption we can work in a monster model, and these can be constructed for positive theories using the usual techniques. We let the reader fix their favourite notion of smallness (e.g., fix a big enough cardinal $\kappa$, and let ``small'' mean $< \kappa$). We recall the properties of a monster model $\MM$:
\begin{itemize}
\item \emph{existentially closed}, $\MM$ is an e.c.\ model;
\item \emph{very homogeneous}, for any small $a, b, C$ we have $\tp(a/C) = \tp(b/C)$ iff there is $f \in \Aut(\MM/C)$ with $f(a) = b$ (we will also write $a \equiv_C b$);
\item \emph{very saturated}, any finitely satisfiable small set of formulas $\Sigma$ over $\MM$ is satisfiable in $\MM$.
\end{itemize}
As usual, we will omit the monster model from notation. For example, we write $\models \phi(a)$ instead of $\MM \models \phi(a)$.

We finish this section with the definition of indiscernible sequences and a lemma to find such sequences. The construction of indiscernible sequences using Ramsey's theorem fails in positive logic, but the construction using the Erd\H{o}s-Rado theorem goes through and gives in fact a stronger result.
\begin{definition}
\thlabel{def:indiscernible-sequences}
A sequence $(a_i)_{i \in I}$ (for some linear order $I$) is $C$-indiscernible if for any $i_1 < \ldots < i_n$ and $j_1 < \ldots < j_n$ in $I$ we have $a_{i_1} \ldots a_{i_n} \equiv_C a_{j_1} \ldots a_{j_n}$.
\end{definition}
\begin{lemma}[{\cite[Lemma 1.2]{ben-yaacov_simplicity_2003}}]
\thlabel{erdos-rado-indiscernible-sequences}
Let $C$ be any parameter set, $\kappa$ any cardinal, and let $\lambda = \beth_{(2^{|T| + |C| + \kappa})^+}$. Then for any sequence $(a_i)_{i < \lambda}$ of $\kappa$-tuples there is a $C$-indiscernible sequence $(b_i)_{i < \omega}$ such that for all $n < \omega$ there are $i_1 < \ldots < i_n < \lambda$ with $b_1 \ldots b_n \equiv_C a_{i_1} \ldots a_{i_n}$.
\end{lemma}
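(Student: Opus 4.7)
The plan is to apply the Erd\H{o}s-Rado partition theorem to the complete positive types of finite subtuples of $(a_i)_{i<\lambda}$, and to extract the desired indiscernible sequence as a subsequence. The only positive-logic-specific ingredient is a type-counting bound: every complete positive type over $C$ is determined by the positive formulas it contains, and there are at most $2^{|T|+|C|+\kappa}$ such formulas (modulo $T$-equivalence) in any fixed finite number of $\kappa$-tuples of variables over $C$, so the space of complete positive $n\kappa$-types over $C$ has cardinality at most $\mu := 2^{|T|+|C|+\kappa}$ for every $n<\omega$.

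First, for each $n<\omega$ I would define the colouring
\begin{equation*}
c_n \colon [\lambda]^n \to \{\text{positive }n\kappa\text{-types over }C\}, \qquad \{i_1<\ldots<i_n\} \mapsto \tp(a_{i_1}\ldots a_{i_n}/C),
\end{equation*}
which uses at most $\mu$ colours. Assembling the $c_n$ into a single colouring $c\colon [\lambda]^{<\omega}\to\mu$ and invoking the iterated Erd\H{o}s-Rado relation $\beth_{\mu^+}\to(\omega)^{<\omega}_\mu$, I obtain an infinite subset $H=\{h_0<h_1<\ldots\}\subseteq\lambda$ such that each restriction $c_n|_{[H]^n}$ is constant with some value $p_n$.

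Now set $b_i:=a_{h_i}$. For any increasing $i_1<\ldots<i_n$ and $j_1<\ldots<j_n$ in $\omega$, both $\{h_{i_1},\ldots,h_{i_n}\}$ and $\{h_{j_1},\ldots,h_{j_n}\}$ lie in $[H]^n$, so by monochromaticity $\tp(b_{i_1}\ldots b_{i_n}/C)=p_n=\tp(b_{j_1}\ldots b_{j_n}/C)$, yielding $C$-indiscernibility. Moreover for each $n<\omega$ we have $b_1\ldots b_n = a_{h_1}\ldots a_{h_n}$ literally, so taking $i_k:=h_k<\lambda$ produces the witnessing increasing tuple in the original sequence.

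The main obstacle I anticipate is not in the model theory but in the set theory: the partition relation $\beth_{\mu^+}\to(\omega)^{<\omega}_\mu$ is not a single application of the classical Erd\H{o}s-Rado theorem $\beth_{n-1}(\mu)^+\to(\mu^+)^n_\mu$, since the latter produces only a monochromatic set of size $\mu^+$ per application, which is too small to iterate naively to the next arity. The standard workaround is a transfinite recursion inside $\beth_{\mu^+}$ combined with a pinning argument, exploiting that $\beth_{\mu^+}>\beth_\alpha(\mu)$ for every $\alpha<\mu^+$ to keep enough room for the next colouring. This is pure partition calculus and transfers verbatim from the full first-order setting; the rest of the argument is syntactically insensitive to whether one works in full first-order or positive logic.
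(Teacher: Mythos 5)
There is a genuine gap, and it is exactly at the step you flag as ``pure partition calculus'': the relation $\beth_{(2^{|T|+|C|+\kappa})^+}\rightarrow(\omega)^{<\omega}_{\mu}$ you invoke is not a theorem of ZFC. A cardinal $\chi$ satisfying $\chi\rightarrow(\omega)^{<\omega}_{2}$ (simultaneous homogeneity for colourings of \emph{all} finite arities, with an infinite homogeneous set) is by definition at least the Erd\H{o}s cardinal $\kappa(\omega)$; the least such cardinal is strongly inaccessible, so its existence cannot be proved in ZFC, and no amount of transfinite recursion or ``pinning'' inside $\beth_{\mu^+}$ recovers it. In fact, in the basic case where $|T|$, $|C|$ and the tuple length are countable, ZFC outright refutes your relation: any cardinal satisfying $\chi\rightarrow(\omega)^{<\omega}_2$ lies above the least one, which is inaccessible and hence a strong limit $\chi=\beth_\chi$, while there is provably no inaccessible $\leq\beth_{(2^{\aleph_0})^+}$. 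The failure is not incidental to your write-up; it is forced by the strengthened conclusion you aim for, namely that $(b_i)$ can be taken to be a \emph{literal subsequence} $b_i=a_{h_i}$. That subsequence form is strictly stronger than the lemma and is precisely what cannot be had at $\beth_{\mu^+}$; the lemma is stated in the weaker ``based on'' form (each $b_1\ldots b_n$ is only $\equiv_C$ to some increasing tuple of the $a_i$'s) for this very reason. (A small separate slip: the number of positive formulas over $C$ in the relevant variables is $|T|+|C|+\kappa+\aleph_0$, not $2^{|T|+|C|+\kappa}$; the bound $\mu=2^{|T|+|C|+\kappa}$ on the number of complete types is still correct.)

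Note also that the paper does not prove this lemma at all: it quotes it as \cite[Lemma 1.2]{ben-yaacov_simplicity_2003}. The standard proof there (Morley's method) avoids your obstacle as follows. One applies the Erd\H{o}s--Rado theorem one arity at a time, to subsets of the index set of unboundedly large beth-cardinality below $\lambda$, and uses the pigeonhole principle over the $(2^{|T|+|C|+\kappa})^+$ many ``levels'' to stabilise, for each $n$, a single complete $n$-tuple type $p_n$ over $C$ such that for every $\alpha<(2^{|T|+|C|+\kappa})^+$ there is a set of indices of size $\geq\beth_\alpha(\mu)$ all of whose increasing $m$-tuples realise $p_m$ for $m\leq n$; the $p_n$ are coherent by construction. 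One then realises the resulting EM-type by saturation of the monster, obtaining $(b_i)_{i<\omega}$ which need not sit inside the original sequence. Two points your argument would still need even after this repair: finite satisfiability of the EM-type is witnessed by increasing tuples of the $a_i$'s (this is where the unbounded-size homogeneous sets are used), and, specific to positive logic, if a tuple in the monster realises the complete positive type $p_n$ of another tuple then its type \emph{equals} $p_n$ (by existential closedness a strictly larger consistent positive type is impossible); this maximality is what turns ``all increasing $n$-tuples realise $p_n$'' into genuine $C$-indiscernibility and into the clause $b_1\ldots b_n\equiv_C a_{i_1}\ldots a_{i_n}$, without ever mentioning negations.
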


\begin{definition}
For a theory $T$ we write $\lambda_T = \beth_{(2^{|T|})^+}$.
\end{definition}

\begin{remark}
\thlabel{rem:boundedness}
Since inequality may not be positively definable, there may be infinite bounded positively definable sets in our monster. In fact, the cardinality of the e.c.\ models of a positive theory might be bounded (such a theory is called \emph{bounded}), which results in a monster model that is itself ``small''. An extreme example is the empty theory in the empty language, whose e.c.\ models are singletons, and so the monster is a singleton. However, there is no need for special treatment for these cases. It just means that if we speak about a sequence (or otherwise indexed set) of parameters $(a_i)_{i < \lambda}$ where $\lambda$ is larger than the cardinality of the monster, we will have duplicates in this sequence. Particularly, the only indiscernible sequences in bounded theories (or, more generally, in bounded positively definable sets) are the constant ones.
\end{remark}

\section{Positive stability}
\label{sec:positive-stability}
In this section we begin our treatment of dividing lines in positive theories from \emph{stability}. We introduce the \emph{order property} (\thref{def:order-property}), the first example of the combinatorial properties which will be discussed in the next sections. \thref{thm:stable-formula} provides a characterisation of stable formulas in the positive context, analogous to the various characterising properties that are well known from full first-order logic. The techniques used in this section are adapted from \cite{shelah_finite_1970, shelah_lazy_1975, grossberg_shelahs_2002, ben-yaacov_simplicity_2003}, as well as from the standard techniques used for full first-order theories. There is also work on stability in the positive setting in \cite[Chapter 4]{belkasmi_contributions_2012}, see \thref{rem:belkasmi-thesis} for more details.
\begin{definition}
\thlabel{def:phi-type}
Let $\phi(x, y)$ be a formula. For $a$ and a parameter set $B$, we write
\[
\tp_\phi(a/B) = \{ \phi(x, b) : \, \models \phi(a, b) \text{ where } b \in B \}.
\]
A \emph{$\phi$-type} over $B$ is a set of formulas of the form $\tp_\phi(a/B)$ for some $a$. So it is the restriction of a maximal type over $B$ to just the $\phi$-formulas. We write $\S_\phi(B)$ for the set of $\phi$-types over $B$.
\end{definition}
\begin{example}
\thlabel{ex:phi-type-not-maximal}
A $\phi$-type is not necessarily maximal. For example, consider the theory $T$ with inequality and two disjoint unary predicates $P$ and $Q$. The e.c.\ models of $T$ are then simply two disjoint infinite sets. Let $M$ be such an e.c.\ model and let $a \in P(M)$ and $b \in Q(M)$. Let $\phi(x)$ be the formula $P(x)$: then $\tp_\phi(a) = \{ \phi(x) \}$, while $\tp_\phi(b) = \emptyset$.
\end{example}
\begin{definition}
\thlabel{def:stable-formula}
Let $\lambda$ be an infinite cardinal. A formula $\phi(x, y)$ is \emph{$\lambda$-stable} if $|B| \leq \lambda$ implies $|\S_\phi(B)| \leq \lambda$. We call $\phi(x, y)$ \emph{stable} if it is $\lambda$-stable for some $\lambda$.
\end{definition}
The following is taken from \cite[Definition 2.1]{ben-yaacov_simplicity_2003}.
\begin{definition}
\thlabel{def:definable-phi-type}
Let $p(x)$ be a type over $B$ and let $\phi(x, y)$ be a formula. A \emph{$\phi$-definition} of $p(x)$ over $C$ is a partial type $\d_p \phi(y)$ over $C$ with $|\d_p \phi(y)| \leq |T|$ such that
\[
\phi(x, b) \in p(x) \quad \Longleftrightarrow \quad \models \d_p \phi(b).
\]
We say that $p(x)$ is \emph{$\phi$-definable (over $C$)} if it has a $\phi$-definition over $C$. If $p(x)$ is $\phi$-definable over $B$ we just say it is $\phi$-definable.
\end{definition}
\begin{definition}
\thlabel{def:order-property}
A formula $\phi(x,y)$ has the \emph{order property} (\OP) if there are sequences $(a_i)_{i < \omega}$ and $(b_i)_{i < \omega}$ and a negation $\psi(x,y)$ of $\phi(x,y)$ such that for all $i, j < \omega$ we have: 
\begin{align*}
&\models \phi(a_i, b_j) \quad\text{if} \ i < j, \\
&\models \psi(a_i, b_j) \quad\text{if} \ i \geq j.
\end{align*}
\end{definition}
Note that by compactness the exact shape of the linear order in the order property (\thref{def:order-property}) does not matter. That is, we can replace $\omega$ with any infinite linear order. In fact, we can use this trick to state the order property in terms of indiscernible sequences, getting rid of the negation $\psi(x, y)$.
\begin{proposition}
\thlabel{prop:order-property-indiscernible-sequence}
A formula $\phi(x, y)$ has the order property iff there is an indiscernible sequence $(a_i b_i)_{i < \omega}$ such that
\[
\models \phi(a_i, b_j) \quad \Longleftrightarrow \quad i < j.
\]
\end{proposition}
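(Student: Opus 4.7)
The plan is to prove both directions, handling the subtler backward direction first. For $(\Leftarrow)$, let $(a_i b_i)_{i < \omega}$ be the given indiscernible sequence. I keep the same sequences $(a_i)_{i < \omega}$ and $(b_i)_{i < \omega}$ as candidate \OP-witnesses; what is missing is a single negation $\psi(x, y)$ of $\phi$ with $\models \psi(a_i, b_j)$ for all $i \geq j$. By indiscernibility, every pair $(a_i, b_j)$ with $i > j$ has the same type as $(a_1, b_0)$, while every diagonal pair $(a_i, b_i)$ has the same type as $(a_0, b_0)$; both $\phi(a_0, b_0)$ and $\phi(a_1, b_0)$ fail by assumption. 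Since $\MM$ is existentially closed, I pick negations $\psi_0, \psi_1$ of $\phi$ with $\models \psi_0(a_0, b_0)$ and $\models \psi_1(a_1, b_0)$. Then $\psi := \psi_0 \vee \psi_1$ is a positive formula, still a negation of $\phi$ (each of $\phi \wedge \psi_i$ is inconsistent modulo $T$, so their disjunction is too), and it holds on every pair with $i \geq j$ by indiscernibility.

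For $(\Rightarrow)$, I start from \OP-witnesses $(a_i)_{i < \omega}$, $(b_i)_{i < \omega}$ and a negation $\psi$. First, stretch: the partial type in variables $(x_i, y_i)_{i < \lambda_T}$ consisting of $\phi(x_i, y_j)$ for $i < j$ together with $\psi(x_i, y_j)$ for $i \geq j$ is finitely satisfiable by the original $\omega$-sequences (any finite sub-fragment mentions finitely many indices, which embed order-preservingly into $\omega$), so saturation of $\MM$ yields a realisation $(a'_i)_{i < \lambda_T}$, $(b'_i)_{i < \lambda_T}$. Now apply \thref{erdos-rado-indiscernible-sequences} to the pair-sequence $(a'_i b'_i)_{i < \lambda_T}$ to extract an indiscernible sequence $(c_i d_i)_{i < \omega}$ whose initial segment of any length $n$ shares its type with $(a'_{k_1} b'_{k_1}, \ldots, a'_{k_n} b'_{k_n})$ for some $k_1 < \ldots < k_n$. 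For prescribed $i, j < \omega$, choose $n$ larger than both and use this matching to obtain $(c_i, d_j) \equiv (a'_{k_i}, b'_{k_j})$; then $\models \phi(c_i, d_j)$ iff $k_i < k_j$ iff $i < j$, as required.

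The main obstacle, compared to the full first-order proof, lies entirely in the backward direction: the definition of \OP demands a \emph{single} negation $\psi$, and since positive logic lacks a primitive $\neg \phi$ one has to manufacture this $\psi$ by gluing local negations. Thanks to indiscernibility only two such local negations are needed, and the disjunction of two negations is again a negation, so one disjunction suffices. The forward direction is then routine given the positive compactness afforded by saturation of $\MM$ and the Erd\H{o}s-Rado lemma already in hand.
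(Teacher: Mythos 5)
Your proposal is correct and follows essentially the same route as the paper: the forward direction elongates the \OP witnesses by compactness and extracts an indiscernible sequence via \thref{erdos-rado-indiscernible-sequences}, and the backward direction manufactures the single negation as the disjunction of two negations obtained from $\not\models\phi(a_0,b_0)$ and $\not\models\phi(a_1,b_0)$, spread along the sequence by indiscernibility. No gaps; the only differences are cosmetic (order of the directions and the explicit choice of $\lambda_T$ as the stretching length).
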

\begin{proof}
For the left to right direction let $(a'_i)_{i < \omega}$, $(b'_i)_{i < \omega}$ and $\psi(x, y)$ witness the order property. By compactness we may elongate the sequences to $(a'_i)_{i < \lambda}$ and $(b'_i)_{i < \lambda}$. Making sure that $\lambda$ is big enough, we can then by \thref{erdos-rado-indiscernible-sequences} base an indiscernible sequence $(a_i b_i)_{i < \omega}$ on $(a'_i b'_i)_{i < \lambda}$. Now if $i < j < \omega$ then there are $i_0 < j_0 < \lambda$ such that $a_i b_j \equiv a'_{i_0} b'_{j_0}$, and so $\models \phi(a_i, b_j)$ follows from $\models \phi(a'_{i_0}, b'_{j_0})$. For the converse we prove the contrapositive, so let $j \leq i$. Then there are $j_0 \leq i_0 < \lambda$ (with $j_0 = i_0$ iff $j = i$) such that $a_i b_j \equiv a'_{i_0} b'_{j_0}$. Hence $\models \psi(a'_{i_0}, b'_{j_0})$ and so $\not \models \phi(a_i, b_j)$.

For the right to left direction we only need to find the negation $\psi(x, y)$. As we have $\not \models \phi(a_0, b_0)$ there must be some negation $\psi_1(x, y)$ of $\phi(x, y)$ with $\models \psi_1(a_0, b_0)$. By indiscernibility we have $\models \psi_1(a_i, b_i)$ for all $i < \omega$. Similarly, using $\not \models \phi(a_1, b_0)$ we find a negation $\psi_2(x, y)$ with $\models \psi_2(a_i, b_j)$ for all $j < i$. Take $\psi(x, y)$ to be $\psi_1(x, y) \vee \psi_2(x, y)$. As both of $\psi_1(x, y)$ and $\psi_2(x, y)$ are negations of $\phi(x, y)$ we have that $\psi(x, y)$ is also a negation of $\phi(x, y)$. Furthermore, by construction $j \leq i$ implies $\models \psi(a_i, b_j)$.
\end{proof}
\begin{definition}
\thlabel{def:binary-tree-property}
A formula $\phi(x, y)$ is said to have the \emph{binary tree property} if there is a negation $\psi(x, y)$ of $\phi(x, y)$ together with $(b_\eta)_{2^{< \omega}}$ such that for every $\sigma \in 2^\omega$ the set
\[
\{ \chi_{\sigma(n)}(x, b_{\sigma|_n}) : n < \omega \}
\]
is consistent, where $\chi_0 := \phi$ and $\chi_1 := \psi$.
\end{definition}
\begin{definition}[{\cite[Definition 2.1]{ben-yaacov_simplicity_2003}, simplified}]
\thlabel{def:phi-psi-rank}
For contradictory formulas $\phi(x, y)$ and $\psi(x, y)$ we define the \emph{$(\phi, \psi)$-rank} $R_{\phi, \psi}(-)$ as follows. The input is a set of formulas (possibly with parameters) in free variables $x$. Then $R_{\phi, \psi}(-)$ is the least function into the ordinals (together with $-1$ and $\infty$) such that:
\begin{itemize}
\item $R_{\phi, \psi}(\Sigma) \geq 0$ if $\Sigma(x)$ is consistent;
\item $R_{\phi, \psi}(\Sigma) \geq \alpha + 1$ if there is some $b$ such that $R_{\phi, \psi}(\Sigma \cup \{\phi(x, b)\}) \geq \alpha$ and $R_{\phi, \psi}(\Sigma \cup \{\psi(x, b)\}) \geq \alpha$;
\item $R_{\phi, \psi}(\Sigma) \geq \ell$ if $R_{\phi, \psi}(\Sigma) \geq \alpha$ for all $\alpha < \ell$, where $\ell$ is a limit ordinal.
\end{itemize}
\end{definition}
\begin{lemma}
\thlabel{lem:phi-psi-rank-basics}
Let $\phi(x, y)$ and $\psi(x, y)$ be contradictory formulas.
\begin{enumerate}[label=(\roman*)]
\item If $\Sigma(x)$ implies $\Sigma'(x)$ then $R_{\phi, \psi}(\Sigma) \leq R_{\phi, \psi}(\Sigma')$.
\item The property $R_{\phi, \psi}(\Sigma) \geq n$ is type-definable by
\[
\exists (y_\eta)_{\eta \in 2^{< n}} \left( \bigwedge_{\sigma \in 2^n} \exists x \left( \Sigma(x) \wedge \bigwedge_{k < n} \chi_{\sigma(k)}(x, y_{\sigma|_k}) \right) \right),
\]
where $\chi_0$ and $\chi_1$ are $\phi$ and $\psi$ respectively. In particular, if $\Sigma$ is finite (i.e.\ a formula), then this is just a formula.
\end{enumerate}
\end{lemma}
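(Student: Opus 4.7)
Both parts are routine inductions, so the plan is really to set them up cleanly. For (i) I will induct on $\alpha$ to show $R_{\phi,\psi}(\Sigma) \geq \alpha \Longrightarrow R_{\phi,\psi}(\Sigma') \geq \alpha$. The base case is immediate from the fact that if $\Sigma \Rightarrow \Sigma'$ and $\Sigma$ is consistent then $\Sigma'$ is consistent. In the successor step, any witness $b$ for $R_{\phi,\psi}(\Sigma) \geq \alpha + 1$ is also a witness for $\Sigma'$: the enriched sets $\Sigma \cup \{\chi_i(x,b)\}$ still imply $\Sigma' \cup \{\chi_i(x,b)\}$ for $i \in \{0,1\}$, so the inductive hypothesis applies to each side, with the same $b$. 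The limit case is trivial from the definition.

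For (ii) I will induct on $n$. The case $n = 0$ reduces the displayed formula to $\exists x\,\Sigma(x)$, which is exactly consistency of $\Sigma$, i.e.\ $R_{\phi,\psi}(\Sigma) \geq 0$. For the step from $n$ to $n+1$, unpacking the rank definition gives that $R_{\phi,\psi}(\Sigma) \geq n+1$ iff there exists $b$ with $R_{\phi,\psi}(\Sigma \cup \{\phi(x,b)\}) \geq n$ and $R_{\phi,\psi}(\Sigma \cup \{\psi(x,b)\}) \geq n$. By the inductive hypothesis each of these two conditions is type-defined by the depth-$n$ tree formula applied to $\Sigma \cup \{\phi(x,b)\}$ and $\Sigma \cup \{\psi(x,b)\}$ respectively. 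Setting $y_\emptyset := b$ and concatenating the two witnessing families of $y$-parameters as the $0$- and $1$-subtrees under $y_\emptyset$ produces a depth-$(n+1)$ witness to the displayed formula; conversely, from such a witness one reads off $b := y_\emptyset$ and the two subtrees.

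The only fiddly step is the reindexing. A sequence $\sigma \in 2^{n+1}$ decomposes as $\sigma(0) ^\frown \sigma'$ with $\sigma' \in 2^n$; then $\sigma|_0 = \emptyset$ and $\sigma|_{k+1} = \sigma(0) ^\frown \sigma'|_k$ for $k < n$. Splitting the big conjunction over $\sigma \in 2^{n+1}$ according to the value of $\sigma(0)$, and renaming $y_{i ^\frown \eta} \rightsquigarrow y'_\eta$ on each half, transforms the two halves into exactly the depth-$n$ formulas supplied by the inductive hypothesis. I do not expect any real obstacle beyond this bookkeeping, since the rank in \thref{def:phi-psi-rank} is visibly engineered to match the tree formula, and the in-particular clause (that the definition is a formula when $\Sigma$ is finite) falls out because only finitely many $y$-variables appear and all connectives used are $\exists$ and $\wedge$.
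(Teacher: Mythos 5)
Your proof is correct and fills in exactly the two routine inductions the paper has in mind: the paper's own proof consists only of the observation that both parts are straightforward induction arguments, and you carry them out in the expected way. The reindexing at the successor step of (ii) — splitting $\sigma \in 2^{n+1}$ as $\sigma(0)^\frown\sigma'$, peeling off $y_\emptyset$, and recognising the two halves of the conjunction as the depth-$n$ formulas for $\Sigma \cup \{\phi(x,y_\emptyset)\}$ and $\Sigma \cup \{\psi(x,y_\emptyset)\}$ — is the only non-trivial bookkeeping, and you handle it correctly.
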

\begin{proof}
Both are straightforward induction arguments. The key intuition being that $R_{\phi, \psi}(\Sigma) \geq n$ expresses that we can build a binary tree like \thref{def:binary-tree-property} of height $n$ and where every path is also consistent with $\Sigma$.
\end{proof}
\begin{lemma}
\thlabel{lem:binary-tree-iff-infinite-phi-psi-rank}
A formula $\phi(x, y)$ has the binary tree property iff there is a negation $\psi(x, y)$ of $\phi(x, y)$ such that $R_{\phi, \psi}(x=x) \geq \omega$.
\end{lemma}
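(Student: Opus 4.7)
My plan is to treat the two directions separately, both using the characterisation of $R_{\phi,\psi}(\Sigma)\geq n$ from \thref{lem:phi-psi-rank-basics}(ii), which says precisely that there exists a tree $(y_\eta)_{\eta\in 2^{<n}}$ of parameters such that each of the $2^n$ paths is consistent with $\Sigma$.

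For the forward direction, suppose $\phi(x,y)$ has the binary tree property witnessed by a negation $\psi(x,y)$ and parameters $(b_\eta)_{\eta\in 2^{<\omega}}$. For any fixed $n<\omega$ and any $\sigma\in 2^n$, pick any extension $\sigma'\in 2^\omega$ of $\sigma$; then $\{\chi_{\sigma'(k)}(x,b_{\sigma'|_k}):k<\omega\}$ is consistent, so in particular its finite initial segment $\{\chi_{\sigma(k)}(x,b_{\sigma|_k}):k<n\}$ is consistent. So $(b_\eta)_{\eta\in 2^{<n}}$ witnesses $R_{\phi,\psi}(x=x)\geq n$ via \thref{lem:phi-psi-rank-basics}(ii), and this holds for every $n<\omega$, hence $R_{\phi,\psi}(x=x)\geq\omega$.

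For the backward direction, we need to upgrade the finite approximations into a single infinite tree by compactness. Fix a negation $\psi(x,y)$ of $\phi(x,y)$ with $R_{\phi,\psi}(x=x)\geq\omega$. Consider the set of (positive) formulas over $\emptyset$ in variables $(y_\eta)_{\eta\in 2^{<\omega}}$:
\[
\Sigma\bigl((y_\eta)_\eta\bigr) = \left\{\exists x\bigwedge_{k<n}\chi_{\sigma(k)}(x,y_{\sigma|_k}) : n<\omega,\ \sigma\in 2^n\right\}.
\]
A finite subset of $\Sigma$ mentions only finitely many variables, all sitting inside $2^{<N}$ for some $N<\omega$. Since $R_{\phi,\psi}(x=x)\geq N$, by \thref{lem:phi-psi-rank-basics}(ii) we can find parameters realising the existentially quantified formula displayed there, and these parameters (padded arbitrarily at higher levels) realise the chosen finite subset of $\Sigma$. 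By saturation of the monster, $\Sigma$ is realised by some $(b_\eta)_{\eta\in 2^{<\omega}}$. Now for any $\sigma\in 2^\omega$, every finite subset of $\{\chi_{\sigma(k)}(x,b_{\sigma|_k}):k<\omega\}$ is consistent because $\Sigma$ is satisfied by $(b_\eta)_\eta$; by saturation again this set is consistent, giving the binary tree property.

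The only subtle point is to confirm that the formulas appearing in $\Sigma$ and in \thref{lem:phi-psi-rank-basics}(ii) are all positive, so that the saturation of the monster applies; this is immediate because $\phi$ and $\psi$ are positive and the construction only uses $\wedge$ and $\exists$. I do not anticipate any real obstacle beyond bookkeeping: the content of the statement is essentially that the characterisation of $R_{\phi,\psi}\geq n$ in \thref{lem:phi-psi-rank-basics}(ii) is preserved in the limit by compactness, and this is the standard way in which ranks and tree properties interact.
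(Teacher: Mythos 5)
Your proof is correct and follows the same route as the paper, which simply cites \thref{lem:phi-psi-rank-basics} and compactness; you have merely spelled out the details of translating between the finite rank conditions $R_{\phi,\psi}(x=x)\geq n$ and the infinite tree via the type-definability of the rank and saturation of the monster. No gaps.
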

\begin{proof}
By \thref{lem:phi-psi-rank-basics} and compactness.
\end{proof}
\begin{theorem}
\thlabel{thm:stable-formula}
The following are equivalent for a formula $\phi(x, y)$:
\begin{enumerate}[label=(\roman*)]
\item $\phi$ is stable,
\item $|\S_\phi(B)| \leq (|B| + |T|)^{|T|}$ for every $B$,
\item $\phi$ does not have the order property,
\item $\phi$ does not have the binary tree property,
\item $R_{\phi,\psi}(x=x) < \omega$ for every negation $\psi(x, y)$ of $\phi(x, y)$,
\item for any $B$ every type over $B$ is $\phi$-definable.
\end{enumerate}
\end{theorem}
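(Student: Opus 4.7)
I aim to establish the cycle of implications
\[
\text{(ii)} \Rightarrow \text{(i)} \Rightarrow \text{(iii)} \Rightarrow \text{(iv)} \Rightarrow \text{(vi)} \Rightarrow \text{(ii)},
\]
together with (iv) $\Leftrightarrow$ (v), which is already \thref{lem:binary-tree-iff-infinite-phi-psi-rank}. The trivial implications are (ii) $\Rightarrow$ (i), which holds for $\lambda = 2^{|T|}$, and (vi) $\Rightarrow$ (ii), which follows by counting: every $\phi$-type over $B$ extends to a complete type over $B$ whose $\phi$-definition is a partial type of size $\leq |T|$ over $B$, and the number of partial types of size $\leq |T|$ in the formulas over $B$ is bounded by $(|B|+|T|)^{|T|}$.

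For (i) $\Rightarrow$ (iii), I proceed by contrapositive. Given OP, \thref{prop:order-property-indiscernible-sequence} yields an indiscernible witness $(a_i b_i)_{i < \omega}$ that I elongate by compactness along any linear order $I$. Taking $I$ to be a dense subset of $2^\lambda$ (with lex order) of size $\lambda$, the Dedekind cuts of $I$ number at least $2^\lambda > \lambda$ and yield that many pairwise distinct $\phi$-types over $\{b_i : i \in I\}$, violating $\lambda$-stability for every infinite $\lambda$.

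For (iii) $\Rightarrow$ (iv), I again argue contrapositively. Assume $\phi$ has BTP with negation $\psi$ and tree $(b_\eta)_{\eta \in 2^{<\omega}}$. By compactness I extend the tree to any height, realize a carefully chosen family of branches (differing from a fixed reference branch cofinally far to the right), and apply the positive Erd\H{o}s-Rado lemma \thref{erdos-rado-indiscernible-sequences} to the resulting sequence of realizations together with the relevant branch-point parameters. The indiscernible sequence extracted realizes, after re-indexing, the pattern characterizing OP in \thref{prop:order-property-indiscernible-sequence} with the same negation $\psi$.

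The main obstacle is (iv) $\Rightarrow$ (vi), equivalently (v) $\Rightarrow$ (vi): finite $(\phi,\psi)$-rank for every negation $\psi$ implies that every complete type $p$ is $\phi$-definable. For each negation $\psi$ of $\phi$, let $r_\psi = R_{\phi,\psi}(p)$, which is finite by (v) and monotonicity \thref{lem:phi-psi-rank-basics}(i). The recursion defining rank rules out the simultaneous occurrence of $R_{\phi,\psi}(p \cup \{\phi(x,b)\}) \geq r_\psi$ and $R_{\phi,\psi}(p \cup \{\psi(x,b)\}) \geq r_\psi$ for any parameter $b$, so the condition $\phi(x,b) \in p$ is pinned down uniformly by the rank behaviour at $b$. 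Invoking the type-definability of $R_{\phi,\psi}(-) \geq n$ from \thref{lem:phi-psi-rank-basics}(ii), I assemble a partial type in $y$ capturing this behaviour; collecting the contributions of the at most $|T|$ many negations $\psi$ gives a partial type of size $\leq |T|$ that serves as the $\phi$-definition $\d_p\phi(y)$.
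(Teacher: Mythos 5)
Your decomposition differs from the paper's (the paper cites Ben-Yaacov's Proposition 2.2 for the block (i)$\Leftrightarrow$(ii)$\Leftrightarrow$(v)$\Leftrightarrow$(vi) and then proves (i)$\Leftrightarrow$(iii) itself, the hard direction being the splitting argument of \thref{lem:nop-implies-stable}), and in your cycle all of that hard content is shifted into the step (iii)$\Rightarrow$(iv), i.e.\ into a direct proof that the binary tree property implies \OP. That step, as sketched, does not work. From a tree $(b_\eta)$ with realized branches, the only instances $\phi(a,b_\eta)$ or $\psi(a,b_\eta)$ you can certify are those where $b_\eta$ lies \emph{on} the branch realized by $a$; and no choice of branches and nodes can certify the full \OP pattern, because the pattern forces two branches to take different directions at some column node $b_j$ and yet both pass through a later column node $b_{j'}$ --- impossible in a tree, since diverging branches share no later nodes. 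Concretely, any family such as ``branches deviating from a fixed reference branch'' only pins down a triangular region plus the diagonal; after applying \thref{erdos-rado-indiscernible-sequences} the remaining triangle is filled in by indiscernibility in one of two ways, and in one of the two cases you obtain the pattern ``$\phi(a_i,b_j)$ iff $i\neq j$'' (or ``iff $i=j$'') with $\psi$ on the other part --- a configuration realized by $x\neq y$ (resp.\ $x=y$) in the stable theory of an infinite set, so it yields no \OP for $\phi$ and, incidentally, certainly not ``with the same negation $\psi$''. Handling that bad case is precisely where the real work lies; the paper's route from BTP to \OP goes through the rank/type-counting equivalences and then the Grossberg--Shelah-style splitting argument, and your proposal contains no substitute for it.

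Two smaller points. In (i)$\Rightarrow$(iii) the auxiliary fact is misquoted: the lexicographic order on $2^\lambda$ need not have a dense (cut-separating) subset of size $\lambda$ for uncountable $\lambda$ (its density is $2^{<\lambda}$); what you need is the standard fact the paper uses, namely a linear order $I$ with a dense subset $I_0$ of size $\lambda$ and $|I|>\lambda$, e.g.\ built from $2^{<\mu}$ with $\mu$ minimal such that $2^\mu>\lambda$. In (iv)$\Rightarrow$(vi) your outline is essentially Ben-Yaacov's argument, but it omits the step that makes it work: one must choose, for each negation $\psi$, a \emph{finite} $p_0\subseteq p$ with $R_{\phi,\psi}(p_0)=R_{\phi,\psi}(p)$ (possible since the rank is finite and is the minimum over finite subtypes), and then characterize $\phi(x,b)\in p$ by ``$R_{\phi,\psi}(p_0\cup\{\phi(x,b)\})\geq r_\psi$ for every negation $\psi$'', using that $\phi(x,b)\notin p$ forces some negation $\psi_0(x,b)\in p$ by maximality of $p$ in the e.c.\ monster. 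Without passing to such finite subtypes the rank condition is not obviously expressible by at most $|T|$ formulas over $B$, as \thref{def:definable-phi-type} requires.
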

\begin{proof}
The equivalence (iv) $\Leftrightarrow$ (v) is \thref{lem:binary-tree-iff-infinite-phi-psi-rank}. The equivalence between (i), (ii), (v) and (vi) is exactly \cite[Proposition 2.2]{ben-yaacov_simplicity_2003}.

\underline{(i) $\Rightarrow$ (iii)} We prove the contraposition. So let $\lambda$ be an arbitrary infinite cardinal. By a standard result there is a linear order $I$ with a dense subset $I_0 \subseteq I$ such that $|I_0| = \lambda$ and $|I| > \lambda$ (see e.g.\ \cite[Exercise 8.2.8]{tent_course_2012}). Let $\psi(x, y)$ be the negation of $\phi(x, y)$ witnessing the order property. So by compactness there are $(a_i)_{i \in I}$ and $(b_i)_{i \in I}$ such that for all $i,j \in I$:
\begin{align*}
&\models \phi(a_i, b_j) & \text{if } i < j,\\
&\models \psi(a_i, b_j) & \text{if } i \geq j.
\end{align*}
Set $B = (b_i)_{i \in I_0}$, then as $I_0$ is dense in $I$ we have that $\tp_\phi(a_i/B) \neq \tp_\phi(a_j/B)$ for any $i \neq j$. So we find $|\S_\phi(B)| \geq |I| > \lambda$ while $|B| \leq \lambda$ and we conclude that $\phi$ is not $\lambda$-stable.

\underline{(iii) $\Rightarrow$ (i)} This implication requires some more preparation, so we postpone it to \thref{lem:nop-implies-stable}.
\end{proof}
\begin{example}
\thlabel{ex:counterexample-to-classical-stable-formula}
In full first-order logic, for a formula $\phi(x, y)$ the following are equivalent (see e.g.\ \cite[Theorem 8.2.3]{tent_course_2012}):
\begin{enumerate}[label=(\roman*)]
\item $\phi$ is stable,
\item there is no sequence $(a_i b_i)_{i < \omega}$ such that $\models \phi(a_i, b_j)$ iff $i < j$,
\item $|\S_\phi(B)| \leq |B|$ for any infinite $B$.
\end{enumerate}
Of course, (ii) is the classical formulation of the order property. In a full first-order theory this is easily seen to be equivalent to \thref{def:order-property}: just take $\psi(x, y)$ to be $\neg \phi(x, y)$. Point (iii) is a stronger version of \thref{thm:stable-formula}(ii).

We will show that this equivalence generally fails in positive logic. That is, we will construct a theory together with a stable formula $\phi(x, y)$ (in fact, the entire theory will be stable) such that (ii) and (iii) fail for $\phi$.

Write $\Q_{(0,1)} = \{q \in \Q : 0 < q < 1\}$. Consider the language $\L$ with a constant for each element of $\Q_{(0,1)}$, and an order symbol $\leq$. Considering the obvious $\L$-structure on $\Q_{(0,1)}$, we let $T$ be the set of all h-inductive sentences true in $\Q_{(0,1)}$. One quickly verifies that the real unit interval $[0, 1]$ is a maximal e.c.\ model for this theory. So the number of $\phi$-types is bounded by $2^{\aleph_0}$, for any $\phi$. Hence every formula is stable.

Consider the formula $\phi(x, y)$ given by $x \leq y$. For $n < \omega$ set $a_n = 1-\frac{1}{n+2}$ and $b_n = 1-\frac{1}{n+1}$. Then clearly $\models \phi(a_i, b_j)$ iff $i < j$, so (ii) fails for $\phi(x, y)$. The important difference with \thref{def:order-property} is of course that for $i \geq j$ there is not just one uniform reason (in the form of a negation of $\phi$) for $\not \models \phi(a_i, b_j)$.

Using the same formula $\phi(x, y)$, we let $B = \Q_{(0,1)}$. The $\phi$-types over $B$ then correspond exactly to real numbers in $[0,1]$, via Dedekind cuts. So we have $|\S_\phi(B)| = 2^{\aleph_0} > \aleph_0 = |B|$, and hence (iii) fails As $B$ only contains constants we may even take $B = \emptyset$, but then $B$ is no longer infinite, which is technically required for (iii).

Generally, this example shows that in positive logic we may find some infinite linear order in a stable theory, but as long as they are bounded this should not cause unstability. Intuitively this is because growth (e.g.\ of the type spaces) beyond that bound is then again well-behaved.

Note that in particular this sort of behaviour can also appear in unbounded theories, if they have bounded sorts or bounded positively definable sets. For example, we could add a separate sort with a symbol for inequality to the theory in this example, and have our theory state that the additional sort is an infinite set. The theory is now unbounded, but the example still goes through.
\end{example}
With the adjusted definitions for stability of a formula, we get the usual equivalent definitions of a stable theory. The arguments are standard, but we include them for completeness' sake.
\begin{definition}
\thlabel{def:stable-theory}
Let $\lambda$ be an infinite cardinal. A theory $T$ is \emph{$\lambda$-stable} if $|B| \leq \lambda$ implies $|\S_n(B)| \leq \lambda$ for all $n < \omega$, where $\S_n(B)$ is the set of $n$-types with parameters in $B$. We call $T$ \emph{stable} if it is $\lambda$-stable for some $\lambda$. 
\end{definition}
\begin{example}
\thlabel{ex:bounded-stable}
Any bounded theory is stable: since every type must be realised in the monster, we have for all $n < \omega$ that $|S_n(\MM)|=|\MM|$. Considering $|\MM|$ is fine here, because in bounded theories the monster is small, see \thref{rem:boundedness}.
\end{example}
\begin{theorem}
\thlabel{stable-theory}
The following are equivalent for a theory $T$:
\begin{enumerate}[label=(\roman*)]
\item $T$ is stable,
\item all formulas in $T$ are stable,
\item $T$ is $\lambda$-stable for all $\lambda$ such that $\lambda^{|T|} = \lambda$.
\end{enumerate}
\end{theorem}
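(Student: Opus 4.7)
The plan is to prove the three implications (iii) $\Rightarrow$ (i) $\Rightarrow$ (ii) $\Rightarrow$ (iii). The first is essentially immediate: since $(2^{|T|})^{|T|} = 2^{|T|}$, condition (iii) provides at least one $\lambda$ for which $|\S_n(B)| \leq \lambda$ whenever $|B| \leq \lambda$, and this gives stability.

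For (i) $\Rightarrow$ (ii), I would fix a formula $\phi(x,y)$ and observe that restriction gives a surjection $\S_{|x|}(B) \twoheadrightarrow \S_\phi(B)$: any $\phi$-type $\tp_\phi(a/B)$ is the $\phi$-restriction of $\tp(a/B)$. Hence $|\S_\phi(B)| \leq |\S_{|x|}(B)|$. So if $T$ is $\lambda$-stable and $|B| \leq \lambda$, then $|\S_\phi(B)| \leq \lambda$, showing $\phi$ is $\lambda$-stable (hence stable).

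The real content is (ii) $\Rightarrow$ (iii), and here I would use the characterisation (vi) in \thref{thm:stable-formula}: under (ii), every type over $B$ is $\phi$-definable, for every $\phi$. Fix $\lambda$ with $\lambda^{|T|} = \lambda$ and $|B| \leq \lambda$, and fix $n < \omega$. For each formula $\phi(x,y)$ (with $|x| = n$) and each $p \in \S_n(B)$, choose a $\phi$-definition $\d_p\phi(y)$, which by \thref{def:definable-phi-type} is a partial type over $B$ of size at most $|T|$. A type $p \in \S_n(B)$ is then determined by the sequence $(\d_p\phi)_\phi$, because
\[
p(x) = \{\phi(x,b) : b \in B, \phi(x,y) \in \L, \models \d_p\phi(b)\}.
\]
To count: the number of formulas with parameters in $B$ is bounded by $|T| \cdot (|B| + \aleph_0)^{<\omega} \leq \lambda$, so the number of partial types over $B$ of size at most $|T|$ is bounded by $\lambda^{|T|} = \lambda$. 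Hence for each of the $|T|$ formulas $\phi$ there are at most $\lambda$ choices of $\d_p\phi$, giving a total of $\lambda^{|T|} = \lambda$ sequences. Thus $|\S_n(B)| \leq \lambda$, as required.

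The main obstacle is the counting step in (ii) $\Rightarrow$ (iii), and in particular making sure that the $\phi$-definitions live over $B$ (not over some larger set) so that the cardinal arithmetic bound $\lambda^{|T|} = \lambda$ actually closes the argument; this is exactly what characterisation (vi) of \thref{thm:stable-formula} gives us. Everything else is routine bookkeeping once that characterisation is available.
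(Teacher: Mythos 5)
Your proposal is correct. The decomposition into the cycle (iii) $\Rightarrow$ (i) $\Rightarrow$ (ii) $\Rightarrow$ (iii) and the first two implications are exactly as in the paper; the only genuine difference is in (ii) $\Rightarrow$ (iii). There the paper invokes characterisation (ii) of \thref{thm:stable-formula}, namely the local bound $|\S_\phi(B)| \leq (|B| + |T|)^{|T|} \leq \lambda^{|T|} = \lambda$, and then simply observes that a complete type is determined by its family of $\phi$-restrictions, so $|\S_n(B)| \leq \lambda^{|T|} = \lambda$. You instead route through characterisation (vi), definability of types: a type is determined by a choice of $\phi$-definitions $(\d_p\phi)_\phi$, and you count the possible definition schemes, using that Definition \thref{def:definable-phi-type} bounds $|\d_p\phi| \leq |T|$ and that (vi) guarantees the definitions can be taken over $B$ itself, so the count $\lambda^{|T|} = \lambda$ closes. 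Both arguments are sound and lean on the same cited theorem; the paper's version is marginally more economical (no need for definability or the size bound on definitions, just the surjection onto $\phi$-restrictions), while yours makes explicit the ``types are coded by their definitions'' picture, which is a useful perspective but strictly more machinery than needed here. Your injectivity argument and the cardinal arithmetic (including the implicit use of $|T| < \lambda$, which indeed follows from $\lambda^{|T|} = \lambda$) are all fine.
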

\begin{proof}
\underline{(i) $\Rightarrow$ (ii)} Let $\lambda$ be such that $T$ is $\lambda$-stable. Then whenever $|B| \leq \lambda$ we have for any $\phi(x, y)$ that $|\S_\phi(B)| \leq |\S_n(B)| \leq \lambda$, where $n = |x|$. So every formula is $\lambda$-stable.

\underline{(ii) $\Rightarrow$ (iii)} Let $\lambda$ be such that $\lambda^{|T|} = \lambda$, and let $|B| \leq \lambda$. As $\lambda^{|T|} = \lambda$ we have that $\lambda > |T|$. So for any $\phi$ we have by \thref{thm:stable-formula} that $|\S_\phi(B)| \leq (|B| + |T|)^{|T|} \leq \lambda^{|T|} = \lambda$. Every type is fully determined by its restrictions to $\phi$-types, as $\phi$ ranges over all formulas in the theory. So there are at most $|T| \times \lambda = \lambda$ many types over $B$, as required.

\underline{(iii) $\Rightarrow$ (i)} Note that $(2^{|T|})^{|T|} = 2^{|T|}$, so $T$ is $2^{|T|}$-stable and hence stable.
\end{proof}
In the remainder of this section we finish the proof of \thref{thm:stable-formula}.
\begin{definition}
\thlabel{def:splitting-type}
Let $\phi(x, y)$ and $\psi(y, z)$ be formulas without parameters and let $A \subseteq B$ be sets of parameters. We say that a type $p(x) \in \S_\phi(B)$ \emph{$(\psi, \phi)$-splits over $A$} if there are $b, b' \in B$ such that $\tp_\psi(b/A) = \tp_\psi(b'/A)$ while $\phi(x, b) \in p(x)$ and $\phi(x, b') \not \in p(x)$.
\end{definition}
\begin{lemma}
\thlabel{lem:non-splitting-facts}
Let $\phi(x, y)$ and $\psi(y, z)$ be formulas without parameters, and let $A \subseteq C$ be parameter sets.
\begin{enumerate}[label=(\roman*)]
\item Suppose $B$ is such that $A \subseteq B \subseteq C$ and it realises every $\psi$-type over $A$ that is realised in $C$. Then if $p_1, p_2 \in \S_\phi(C)$ do not $(\psi, \phi)$-split over $A$ we have that $p_1|_B = p_2|_B$ implies $p_1 = p_2$.
\item There are at most $2^{|\S_\psi(A)| + |A| + |T|}$ many types in $\S_\phi(C)$ that do not $(\psi, \phi)$-split over $A$.
\item If $\lambda \geq |A| + |T|$ then there are at most $2^{2^\lambda}$ many types in $\S_\phi(C)$ that do not $(\psi, \phi)$-split over $A$.
\end{enumerate}
\end{lemma}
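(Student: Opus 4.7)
The plan is to prove the three parts in order, using (i) to drive the counting in (ii) and (iii).

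For (i), I would fix an arbitrary tuple $c$ from $C$ of the arity of $y$ and invoke the hypothesis on $B$ to produce some $b \in B$ with $\tp_\psi(b/A) = \tp_\psi(c/A)$. Since both $b$ and $c$ lie in $C$ and $p_1, p_2$ do not $(\psi,\phi)$-split over $A$, the definition of splitting gives $\phi(x,c) \in p_i \Leftrightarrow \phi(x,b) \in p_i$ for $i = 1, 2$. Combining this with $p_1|_B = p_2|_B$ forces $\phi(x,c) \in p_1 \Leftrightarrow \phi(x,c) \in p_2$, so $p_1 = p_2$.

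For (ii) the main task is to manufacture an intermediate set $B$ satisfying the hypothesis of (i) while keeping $|B|$ small. For each $\psi$-type $r \in \S_\psi(A)$ realised in $C$ I would pick a single realiser $b_r \in C$ and set $B = A \cup \{b_r : r \in \S_\psi(A) \text{ realised in } C\}$, giving $|B| \leq |A| + |\S_\psi(A)|$. Part (i) then says that the restriction map $p \mapsto p|_B$ is injective on the set of $(\psi,\phi)$-non-splitting types in $\S_\phi(C)$, so their number is at most $|\S_\phi(B)|$. Finally the crude bound $|\S_\phi(B)| \leq 2^{|B|+|T|} \leq 2^{|\S_\psi(A)|+|A|+|T|}$ (where $|T|$ absorbs the count of instances of $\phi$ over $B$, in particular handling the arity of $y$) finishes this part.

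Part (iii) is then pure cardinal arithmetic on top of (ii): since $|\S_\psi(A)| \leq 2^{|A|+|T|} \leq 2^\lambda$ and $|A| + |T| \leq \lambda \leq 2^\lambda$, we get $|\S_\psi(A)| + |A| + |T| \leq 2^\lambda$, and the bound from (ii) collapses to $2^{2^\lambda}$.

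The only real point of care is the construction of $B$ in (ii); everything else is either a direct application of the non-splitting definition or bookkeeping with cardinals.
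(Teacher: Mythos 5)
Your proposal is correct and takes essentially the same approach as the paper: prove (i) by chaining the non-splitting hypothesis with agreement on $B$ to transfer membership $\phi(x,c) \in p_1 \Leftrightarrow \phi(x,b) \in p_1 \Leftrightarrow \phi(x,b) \in p_2 \Leftrightarrow \phi(x,c) \in p_2$; build $B$ by choosing one realiser per $\psi$-type realised in $C$; bound the non-splitting types by $|\S_\phi(B)|$ via the injective restriction map and then by $2^{|B|+|T|}$; and collapse in (iii) by cardinal arithmetic.
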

\begin{proof}
To prove (i) we show that $p_1 \subseteq p_2$, from which the result follows by symmetry. Let $\phi(x, c) \in p_1$. By the assumption on $B$, $\tp_\psi(c/A)$ is realised by some $b \in B$. As $p_1$ does not $(\psi, \phi)$-split over $A$ we must then have $\phi(x, b) \in p_1$. We thus have $\phi(x, b) \in p_2$, because $p_1|_B = p_2|_B$, and $\phi(x, c) \in p_2$ follows from the fact that $p_2$ does not $(\psi, \phi)$-split over $A$.

For (ii) we can let $B$ be such that $A \subseteq B \subseteq C$ and realising every $\psi$-type over $A$ that is realised in $C$, while also $|B| \leq |\S_\psi(A)| + |A|$. By (i) then the number of types in $\S_\phi(C)$ that do not $(\psi, \phi)$-split over $A$ is bounded by $|\S_\phi(B)| \leq 2^{|B| + |T|} \leq 2^{|\S_\psi(A)| + |A| + |T|}$.

Finally, for (iii) we apply (ii) using that $|\S_\psi(A)| + |A| + |T| \leq 2^\lambda$.
\end{proof}
We can now fill in the final missing piece of \thref{thm:stable-formula}. The proof strategy used here is based on \cite{grossberg_shelahs_2002}.
\begin{lemma}
\thlabel{lem:nop-implies-stable}
If a formula $\phi(x, y)$ does not have the order property then it is stable.
\end{lemma}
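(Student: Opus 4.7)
The plan is to prove the contrapositive: assuming $\phi(x,y)$ is unstable, construct an indiscernible sequence witnessing \OP. The strategy is the splitting-chain approach of \cite{grossberg_shelahs_2002}, adapted to positive logic via \thref{lem:non-splitting-facts}.

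\emph{Step 1: Building a splitting chain.} Pick a cardinal $\lambda$ large enough that by instability there is a parameter set $B$ with $|B| \leq \lambda$ whose $\phi$-type space exceeds the bound from iterating \thref{lem:non-splitting-facts}(iii). Choosing an appropriate auxiliary formula $\theta(y,z)$ (rich enough that same-$\theta$-type over a small set implies same full positive type over that set, up to logical equivalence), this forces the existence of a type $p \in \S_\phi(B)$ which $(\theta, \phi)$-splits over every small subset of $B$. Inductively construct an increasing chain of small sets $A_0 \subseteq A_1 \subseteq \ldots \subseteq B$ and, for each $n < \omega$, elements $b_n, b_n' \in A_{n+1}$ with $\tp_\theta(b_n/A_n) = \tp_\theta(b_n'/A_n)$, $\phi(x, b_n) \in p$, and $\phi(x, b_n') \notin p$. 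Fix a realiser $a$ of $p$, so $\models \phi(a, b_n)$ and $\not\models \phi(a, b_n')$ for every $n$. Since there are at most $|T|$ negations of $\phi$ up to equivalence, a pigeonhole argument on $n$ yields a cofinal subsequence (which we re-index) and a single negation $\psi(x, y)$ of $\phi$ such that $\models \psi(a, b_n')$ for every $n$.

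\emph{Step 2: Indiscernibility and \OP.} Elongate $(b_n, b_n')_{n<\omega}$ to length $\lambda_T$ by compactness, and apply \thref{erdos-rado-indiscernible-sequences} over $a$ to obtain an $\{a\}$-indiscernible sequence $(c_n, c_n')_{n<\omega}$ with $\models \phi(a, c_n)$, $\models \psi(a, c_n')$, and the $\theta$-type equality $\tp_\theta(c_n/c_{<n} c_{<n}') = \tp_\theta(c_n'/c_{<n} c_{<n}')$ inherited from the original data. By the choice of $\theta$ this gives $c_n$ and $c_n'$ the same full type over $c_{<n} c_{<n}'$, so by homogeneity of the monster model there is, for each $i$, an automorphism fixing $c_{<i} c_{<i}'$ and sending $c_i$ to $c_i'$. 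Applying such automorphisms in sequence and pushing $a$ through them produces, for each $i$, an element $a_i$ such that $\models \psi(a_i, c_j)$ for $j < i$ and $\models \phi(a_i, c_j)$ for $j \geq i$. After a further application of \thref{erdos-rado-indiscernible-sequences} to the sequence $(a_i, c_i)_{i < \omega}$, \thref{prop:order-property-indiscernible-sequence} yields \OP for $\phi$.

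\emph{Main obstacle.} The crux of the argument is twofold. First, in positive logic the non-satisfaction of $\phi$ is not witnessed by a canonical negation; securing a single $\psi$ that works uniformly along the whole sequence relies on pigeonhole using that there are only $|T|$ negations up to equivalence. Second, the $\theta$-type equality provided by the splitting chain must translate into an automorphism-theoretic transfer, which requires choosing $\theta$ rich enough that the $\theta$-type determines the full type on the relevant parameter sets; this choice is possible because \thref{lem:non-splitting-facts}(iii) still applies for such $\theta$, but balancing richness of $\theta$ against the cardinality bound on non-splitting types is the most delicate step of the construction.
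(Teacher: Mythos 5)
There is a genuine gap, and it sits exactly at the point you flag as ``the most delicate step.'' Your argument hinges on an auxiliary formula $\theta(y,z)$ whose $\theta$-type over a small set determines the \emph{full positive type} over that set. No such single formula exists in general: a positive theory typically has infinitely many pairwise inequivalent positive formulas, and no one formula's instances can encode them all. You cannot fix this by passing to a large fragment, because \thref{lem:non-splitting-facts}(iii) needs $\psi$ to be a single formula to keep the counting bound $2^{2^\lambda}$; the ``balance'' you allude to is not achievable.

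The paper's proof avoids this problem by choosing the auxiliary formula to be exactly $\chi(y,x):=\phi(x,y)$, i.e.\ $\phi$ with the roles of the variables swapped, and by \emph{not} trying to upgrade $\chi$-type equality to full-type equality. Rather than moving $a$ around by automorphisms, it realises proxies $c_j$ of $\tp(a_i/B_j b_j b'_j)$ \emph{inside} the parameter chain (possible because the chain was built to realise all types over its small subsets). Since each $c_k$ lies in $B_j$ for $k<j$, the $\chi$-type equality $\tp_\chi(b_j/B_j)=\tp_\chi(b'_j/B_j)$ immediately yields $\models\phi(c_k,b_j)\Leftrightarrow\models\phi(c_k,b'_j)$ --- which, after extracting an indiscernible sequence $(d_j,d'_j,e_j)$, is precisely the biconditional needed, with the $e_j$'s playing the left-hand role in \OP. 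Two further issues in your Step 2 are then moot but worth noting: (a) an automorphism fixing $c_{<i}c'_{<i}$ and sending $c_i\mapsto c'_i$ is uncontrolled on $c_{>i}$, so ``$\models\phi(a_i,c_j)$ for $j\geq i$'' does not follow from pushing $a$ through such maps; and (b) the biconditional $\models\phi(e_k,d_j)\Leftrightarrow\models\phi(e_k,d'_j)$ for $k<j$ does not by itself tell you which truth value occurs, so the paper must split into two cases (using $(e_n,d'_n)$ in one and $(e_n,d_{n+1})$ with reversed order in the other), a step your outline omits.
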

\begin{proof}
We prove the contrapositive, so we assume that $\phi(x, y)$ is not stable. For convenience, set $\mu = 2^{2^{\lambda_T}}$. As $\phi$ is not $\mu$-stable, we find some set $A$ such that $|A| \leq \mu$ and $\S_\phi(A) > \mu$. We can thus find $(a_i)_{i < \mu^+}$ such that $\tp_\phi(a_i / A) \neq \tp_\phi(a_j / A)$ for all $i \neq j < \mu^+$. We inductively build a continuous chain of sets $(A_i)_{i < \mu}$ with $A_0 = A$ such that for all $i < \mu$:
\begin{enumerate}[label=(A\arabic*)]
\item $|A_i| \leq \mu$,
\item for every $B \subseteq A_i$ with $|B| \leq \lambda_T$ every type in $\S(B)$ (in finitely many variables) is realised in $A_{i+1}$.
\end{enumerate}
We can indeed do this because there are at most $\mu^{\lambda_T} = \mu$ many subsets of $A_i$ that have cardinality at most $\lambda_T$, and there are at most $2^{|B| + |T|} \leq 2^{\lambda_T} < \mu$ many types over such a parameter set $B$.

Set $\chi(y, x) := \phi(x, y)$. We now claim that there are cofinally many $i < \mu^+$ such that for all $j < \lambda_T$ the type $\tp_\phi(a_i / A_j)$ $(\chi, \phi)$-splits over each $B \subseteq A_j$ of cardinality at most $\lambda_T$.

\emph{Proof of claim.} Suppose for a contradiction that the claim is false. Then there is some $\alpha < \mu^+$ such that for all $\alpha < i < \mu^+$ there is $j_i < \lambda_T$ and $B_i \subseteq A_{j_i}$ of cardinality at most $\lambda_T$ such that $\tp_\phi(a_i / A_{j_i})$ does not $(\chi, \phi)$-split over $B_i$. As $\mu^+ > \lambda_T$, by the pigeonhole principle, we can find some $I \subseteq \mu^+$ with $|I| = \mu^+$ such that $j_i = j_{i'}$ for all $i,i' \in I$. Write $j$ for $j_i$, where $i \in I$. As $\mu^+ > \mu = \mu^{\lambda_T} \geq |A_j|^{\lambda_T}$ we can apply the pigeonhole principle again to find $I' \subseteq I$ with $|I'| = \mu^+$ and $B_i = B_{i'}$ for all $i,i' \in I'$. Write $B$ for $B_i$, where $i \in I'$. We have that $A \subseteq A_j$, so for any distinct $i, i' \in I'$ we have that $\tp_\phi(a_i / A_j) \neq \tp_\phi(a_{i'} / A_j)$. We thus find $\mu^+ > 2^{2^{\lambda_T}}$ many types that do not $(\chi, \phi)$-split over a set of cardinality at most $\lambda_T$. This contradicts \thref{lem:non-splitting-facts}(iii) and completes the proof of the claim.

Using the claim we find some $i < \mu^+$ such that $a_i \not \in \bigcup_{j < \lambda_T} A_j$, because $\left| \bigcup_{j < \lambda_T} A_j \right| \leq \mu$. So for all $j < \lambda_T$ $\tp_\phi(a_i / A_j)$ $(\chi, \phi)$-splits over every $B \subseteq A_j$ of cardinality at most $\lambda_T$. By induction on $j < \lambda_T$ we define $b_j, b'_j, c_j \in A_{2j+2}$, such that:
\begin{enumerate}[label=(B\arabic*)]
\item writing $B_j = \{b_k, b'_k, c_k : k < j\}$, we have $B_j \subseteq A_{2j}$;
\item $\tp_\chi(b_j/B_j) = \tp_\chi(b'_j/B_j)$,
\item $\models \phi(a_i, b_j)$ and $\not \models \phi(a_i, b'_j)$,
\item $c_j \in A_{2j+1}$ is such that $\tp(c_j / B_j b_j b'_j) = \tp(a_i/B_j b_j b'_j)$.
\end{enumerate}

Let $j < \lambda_T$ and assume we have constructed $b_k, b'_k, c_k$ for all $k < j$. As $B_j \subseteq A_{2j}$ has cardinality at most $\lambda_T$, we have that $\tp_\phi(a_i/A_{2j})$ $(\chi, \phi)$-splits over $B_j$. We can thus find $b_j, b'_j \in A_{2j}$ such that $\tp_\chi(b_j / B_j) = \tp_\chi(b'_j / B_j)$ while $\phi(x, b_j) \in \tp(a_i/A_{2j})$ and $\phi(x, b'_j) \not \in \tp(a_i/A_{2j})$. By construction of $A_{2j+1}$, in particular by (A2), we can find $c_j \in A_{2j+1}$ realising $\tp(a_i/B_j b_j b'_j)$.

Let $(d_j d'_j e_j)_{j < \omega}$ be an indiscernible sequence based on $(b_j b'_j c_j)_{j < \lambda_T}$. We note the following two properties.
\begin{enumerate}[label=(C\arabic*)]
\item By (B2) we have for all $k < j < \lambda_T$ that $\models \chi(b_j, c_k)$ if and only if $\models \chi(b'_j, c_k)$, that is $\models \phi(c_k, b_j)$ if and only if $\models \phi(c_k, b'_j)$. So we must have $\models \phi(e_k, d_j)$ if and only if $\models \phi(e_k, d'_j)$ for all $k < j < \omega$.
\item By (B3) and (B4) we have for all $k \leq j < \lambda_T$ that $\models \phi(c_j, b_k)$ and $\not \models \phi(c_j, b'_k)$. So we must have $\models \phi(e_j, d_k)$ and $\not \models \phi(e_j, d'_k)$ for all $k \leq j < \omega$.
\end{enumerate}
Based on (C1) we distinguish two cases, and show that in each case $\phi(x, y)$ has the order property.
\begin{enumerate}
\item The case where for all $k < j < \omega$ we have $\models \phi(e_k, d'_j)$. By (C2) we have $\not \models \phi(e_j, d'_k)$ for all $k \leq j < \omega$. And we conclude by applying \thref{prop:order-property-indiscernible-sequence} to $(e_n, d'_n)_{n < \omega}$.
\item The case where for all $k < j < \omega$ we have $\not \models \phi(e_k, d_j)$. By (C2) we have that $\models \phi(e_j, d_k)$ for all $k \leq j < \omega$. Write $\omega^\op$ for $\omega$ with the opposite order, then $(e_n, d_{n+1})_{n \in \omega^\op}$ is an indiscernible sequence such that $\models \phi (e_k, d_j) \Leftrightarrow k <^\op j$. Applying compactness and an analogue of \thref{prop:order-property-indiscernible-sequence} we conclude that $\phi(x,y)$ has the order property.
\end{enumerate}
\end{proof}
\begin{remark}
\thlabel{rem:belkasmi-thesis}
We compare the work in this section to \cite[Chapter 4]{belkasmi_contributions_2012}. Their Definition 4.10 is a definition for the order property for formulas $\phi(x, y)$ where $x$ and $y$ are tuples of variables of the same length (and sorts). One quickly verifies that their order property implies our \thref{def:order-property}. Conversely, given $\phi(x, y)$ satisfying our \thref{def:order-property}, as witnessed by $(a_i)_{i < \omega}$, $(b_i)_{i < \omega}$ and $\psi(x, y)$, the formula $\theta(x_1 y_1, x_2 y_2) := \phi(x_1, y_2)$ has the order property in the sense of \cite{belkasmi_contributions_2012} as witnessed by $(a_i b_i)_{i < \omega}$ and negation $\psi'(x_1 y_1, x_2 y_2) := \psi(x_1, y_2)$. Another difference is that \cite{belkasmi_contributions_2012} treats bounded theories separately, proving in \cite[Lemme 4.8]{belkasmi_contributions_2012} that they are stable. However, as we noted in \thref{rem:boundedness}, there is no need for such special treatment: we have seen how stability of bounded theories fits in our approach in \thref{ex:bounded-stable}. Given the translation of the notion of stability for formulas, we get the same results as \cite{belkasmi_contributions_2012} on the level of theories. However, our version allows for local stability and comparison to further combinatorial properties on the level of formulas (e.g.\ \thref{2-tp-implies-op}).
\end{remark}

\section{Definitions of the combinatorial properties}
\label{sec:definitions}
In this section we gather the definitions of the combinatorial properties we will consider. The definitions are very similar to those we know from full first-order logic, and they do indeed coincide when considering a full first-order theory as a positive theory (\thref{rem:morleyisation}). The main ingredient, which we already used in \thref{def:order-property} for \OP, is the idea of \cite[Section 6]{haykazyan_existentially_2021} to introduce ``inconsistency witnesses''. Whenever a traditional definition would say that a set of formulas is inconsistent, we now require the satisfaction of a positive formula that implies the inconsistency of that set of formulas. For example, if we would normally say that $\{\phi(x, a_1), \phi(x, a_2)\}$ is inconsistent, we now want $\models \psi(a_1, a_2)$ where $\psi(y_1, y_2)$ is a negation of $\exists x(\phi(x, y_1) \wedge \phi(x, y_2))$. The importance of this is that we can then use compactness to change the size or shape of the set of parameters involved. For example, instead of only considering sequences of shape $\omega$ for \OP we can consider any infinite sequence.
\begin{definition}
A formula $\phi(x,y)$ has the \emph{independence property} (\IP) if there are $(a_i)_{i < \omega}$, $(c_\sigma)_{\sigma\in 2^\omega}$ and a negation $\psi(x,y)$ of $\phi(x,y)$ such that for all $i < \omega$ and $\sigma\in 2^\omega$ we have: 
\begin{align*}
&\models \phi(a_i, c_\sigma) \quad\text{if} \ \sigma(i) = 1, \\
&\models \psi(a_i, c_\sigma) \quad\text{if} \ \sigma(i) = 0.
\end{align*}
\end{definition}

The study of \IP in positive logic has been initiated in the recent preprint \cite{dobrowolski_amalgamation_2023} by Dobrowolski and Mennuni.

\begin{definition}
\thlabel{def:tree-language}
Let $\kappa$ and $\lambda$ be (potentially finite) cardinals. As usual, we will consider the set $\kappa^{<\lambda}$ of functions $\eta: \alpha \to \kappa$ where $\alpha < \lambda$, as a tree. The partial order on the tree is given by $\eta \preceq \mu$ if $\mu$ extends $\eta$ as a function. We call $\eta$ and $\mu$ \emph{incomparable} if $\eta \not \preceq \mu$ and $\mu \not \preceq \eta$. For any $\eta, \mu \in \kappa^{<\lambda}$ we write $\eta^\frown \mu$ for their concatenation (viewing the functions as strings of ordinals $< \kappa$).
\end{definition}
\begin{definition}
For a natural number $k \geq 2$, a formula $\phi(x,y)$ has the \emph{k-tree property} ($k$-\TP) if there are $(a_\eta)_{\eta \in \omega^{< \omega}}$ and a negation $\psi(y_1, \ldots, y_k)$ of the formula $\exists x (\phi(x, y_1) \wedge \ldots \wedge \phi(x, y_k))$ such that:
\begin{itemize}
    \item[(1)] for all $\sigma \in \omega^\omega$ the set $\{ \phi(x, a_{\sigma|_n}): n < \omega\}$ is consistent,
    \item[(2)] for all $\eta \in \omega^{< \omega}$ and $i_1 < \ldots < i_k < \omega$ we have $\models \psi(a_{\eta^\frown i_1}, \ldots, a_{\eta^\frown i_k})$.
\end{itemize}

A formula $\phi(x,y)$ has the \emph{tree property} (\TP) if there exists a natural number $k \geq 2$ such that $\phi(x,y)$ has $k$-\TP.
\end{definition}

\begin{definition}
A formula $\phi(x,y)$ has the \emph{tree property of the first kind} (\TP[1]) if there are $(a_\eta)_{\eta \in \omega^{< \omega}}$ and a negation $\psi(y_1,y_2)$ of $\exists x (\phi(x,y_1) \wedge \phi(x,y_2))$ such that:
\begin{itemize}
    \item[(1)] for all $\sigma \in \omega^\omega$ the set $\{\phi(x,a_{\sigma |_n}) : n < \omega \}$ is consistent,
    \item[(2)] for all incomparable $\mu,\eta \in \omega^{<\omega}$ we have $\models \psi(a_\mu,a_\eta)$.
\end{itemize}
\end{definition}

\begin{definition}
A formula $\phi(x,y)$ has the \emph{$k$-tree property of the second kind} ($k$-\TP[2]) if there are $(a_{i,j})_{i,j < \omega}$ and a negation $\psi(y_1, \ldots, y_k)$ of the formula $\exists x (\phi(x, y_1) \wedge \ldots \wedge \phi(x, y_k))$ such that:
\begin{itemize}
    \item[(1)] for all $\sigma \in \omega^\omega$ the set $\{ \phi(x, a_{i, \sigma(i)}): i < \omega\}$ is consistent,
    \item[(2)] for all $i < \omega$ and $j_1 < \ldots < j_k < \omega$ we have $\models \psi(a_{i,j_1}, \ldots, a_{i,j_k})$.
\end{itemize}
A formula $\phi(x,y)$ has the \emph{tree property of the second kind} (\TP[2]) if there exists a natural number $k \geq 2$ such that $\phi(x,y)$ has $k$-\TP[2].
\end{definition}

The definition of \TP[2] in positive logic first appeared in \cite{haykazyan_existentially_2021}, as did the following definition of \SOP[1].

\begin{definition}
A formula $\phi(x,y)$ has the \emph{$1$-strong order property} (\SOP[1]) if there are $(a_\eta)_{\eta \in 2^{<\omega}}$ and a negation $\psi(y_1,y_2)$ of $\exists x (\phi(x,y_1) \wedge \phi(x,y_2))$ such that:
\begin{itemize}
    \item[(1)] for all $\sigma \in 2^\omega$ the set $\{\phi(x,a_{\sigma |_n}) : n < \omega \}$ is consistent,
    \item[(2)] for all $\mu, \eta \in 2^{<\omega}$, if $\mu^\frown 0 \preceq \eta$ then $\models \psi(a_{\mu^\frown 1}, a_\eta)$.
\end{itemize}
\end{definition}

\begin{definition}
A formula $\phi(x,y)$ has the \emph{$2$-strong order property} (\SOP[2]) if there are $(a_\eta)_{\eta \in 2^{< \omega}}$ and a negation $\psi(y_1,y_2)$ of $\exists x (\phi(x,y_1) \wedge \phi(x,y_2))$ such that:
\begin{itemize}
    \item[(1)] for all $\sigma \in 2^\omega$ the set $\{\phi(x,a_{\sigma |_n}) : n < \omega \}$ is consistent,
    \item[(2)] for all incomparable $\mu,\eta \in 2^{<\omega}$ we have $\models \psi(a_\mu,a_\eta)$.
\end{itemize}
\end{definition}

\begin{definition}
A theory $T$ has one of the properties above (\OP, \IP, $k$-\TP, $k$-\TP[2], \SOP[1], \SOP[2]) if there exists a formula witnessing it. 
\end{definition}

\begin{definition}
A theory $T$ has the \emph{$3$-strong order property} (\SOP[3]) if there are formulas $\phi_0(x,y)$ and $\phi_1(x,y)$, a sequence $(a_i)_{i < \omega}$, and a negation $\psi(y_1,y_2)$ of $\exists x (\phi_0(x,y_2) \wedge \phi_1(x,y_1))$ such that:
\begin{itemize}
    \item[(1)] for all $k < \omega$ the $\{\phi_0(x,a_i):i < k\} \cup \{\phi_1(x,a_j):j \geq k\} $ is consistent,
    \item[(2)] for all $i<j<\omega$ we have $\models \psi(a_i,a_j)$.
\end{itemize}
\end{definition}

In the full first-order setting, \SOP[3] is usually defined on the level of formulas with a definition which easily generalizes to any natural number larger than $3$, giving rise to the notion of an \SOP[n] formula (or theory). This definition heavily relies on the use of negation, which forms an obstruction to translating it to the positive setting, see \thref{rem:sop}. The definition given here, on the level of theories, is based on \cite[Claim 2.19]{shelah_toward_1996}.

\begin{definition}
If a theory $T$ does not have one of the properties \OP, \IP, \TP, \TP[1], \TP[2], \SOP[1], \SOP[2], \SOP[3], we say that $T$ is \NOP, \NIP, \NTP, \NTP[1], \NTP[2], \NSOP[1], \NSOP[2], \NSOP[3] respectively.
\end{definition}

\section{Implications between the combinatorial properties}
\label{sec:implication-between-the-properties}
In this section we prove \thref{implication-diagram} by proving the implications between the various properties of positive theories defined in Section \ref{sec:definitions}. We break up the proof in its individual components, stating each arrow separately. We start from the left-most implication and make our way inside the diagram. Some of the implications will be proved on a formula level (e.g.\ \thref{proposition:sop2ifftp1}) and for some implications this will only happen on a theory level (e.g.\ \thref{cor:tp-iff-2-tp}).
\begin{remark}
\thlabel{rem:converse-implications}
We make some remarks about the strictness of the implications in \thref{implication-diagram}.
\begin{itemize}
\item The strictness of the implication \SOP[3] $\implies$ \SOP[2] is an open question even in full first-order logic. See also \thref{q:sop3-iff-sop2}.
\item Mutchnik's recent preprint \cite{mutchnik_nsop_2_2022} proves the implication \SOP[1] $\implies$ \SOP[2] in full first-order logic. As the machinery used there is considerably more involved than what we apply here, we do not deal with this problem and leave it to future work, see \thref{q:sop1-iff-sop2}.
\item In full first-order logic we have $k$-\TP[2] $\implies$ $2$-\TP[2]. A recent preprint by the third author proves this for thick theories \cite[Theorem 1.4]{kamsma_positive_2023}. See also \thref{q:2-tp2-iff-k-tp2}.
\item The remaining implications are known to be strict, already in full first-order logic.
\end{itemize}
\end{remark}

\begin{proposition}
If a theory $T$ has \SOP[3] then it has \SOP[2].
\end{proposition}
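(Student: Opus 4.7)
The plan is to build \SOP[2] data from the \SOP[3] witnesses by associating to each tree node a pair of parameters arranged in nested intervals. First, by compactness (realising in the monster the enriched positive theory with constants $\{c_q : q \in \Q\}$), extend the \SOP[3] sequence $(a_i)_{i < \omega}$ to a sequence $(a_q)_{q \in \Q}$ such that for every cut $\Q = Q_0 \sqcup Q_1$ (with $Q_0 < Q_1$) the set $\{\phi_0(x, a_p) : p \in Q_0\} \cup \{\phi_1(x, a_q) : q \in Q_1\}$ is consistent, and such that $\models \psi(a_p, a_q)$ for all $p < q$ in $\Q$. Finite satisfiability of this enriched theory is immediate by mapping the finitely many $c$'s order-preservingly back into $\omega$ and using the original \SOP[3] data.

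Next, recursively assign to each $\eta \in 2^{<\omega}$ a pair $(l_\eta, r_\eta) \in \Q^2$ with $l_\eta < r_\eta$ such that the intervals $(l_\eta, r_\eta)$ are strictly nested along any branch and such that the intervals attached to two incomparable nodes are disjoint. This is a routine construction: given $(l_\eta, r_\eta)$, pick two disjoint rational subintervals strictly inside it and assign them to $\eta^\frown 0$ and $\eta^\frown 1$. Set
\[
\phi(x, y_1 y_2) := \phi_0(x, y_1) \wedge \phi_1(x, y_2), \quad b_\eta := a_{l_\eta} a_{r_\eta}, \quad \psi'(y_1 y_2, z_1 z_2) := \psi(y_2, z_1) \vee \psi(z_2, y_1),
\]
so that $\psi'$ is a positive formula. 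Because each disjunct of $\psi'$ negates a subconjunction of $\exists x(\phi(x, y_1 y_2) \wedge \phi(x, z_1 z_2))$, the full disjunction $\psi'$ is a negation of that existential, as required by \SOP[2].

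For any $\sigma \in 2^\omega$, nesting collapses any finite subset of $\{\phi(x, b_{\sigma|_n}) : n < \omega\}$ to a finite sub-type of $\phi_0(x, a_{l_{\sigma|_N}}) \wedge \phi_1(x, a_{r_{\sigma|_N}})$ for the deepest index $N$ appearing, and this sub-type is realised by invoking the \SOP[3] cut at any rational strictly between $l_{\sigma|_N}$ and $r_{\sigma|_N}$; compactness then yields consistency of $\{\phi(x, b_{\sigma|_n}) : n < \omega\}$ along the whole branch. For incomparable $\eta, \mu$, disjointness gives (WLOG) $r_\eta < l_\mu$, whence $\models \psi(a_{r_\eta}, a_{l_\mu})$ by the \SOP[3] $\psi$-clause, i.e.\ the first disjunct of $\psi'(b_\eta, b_\mu)$ holds.

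The only real subtlety is to align the asymmetry of $\psi$---which witnesses the inconsistency of $\phi_0$ applied to its second argument together with $\phi_1$ applied to its first---with the interval-disjointness datum; this asymmetry is exactly what dictates the shape $\psi'(y_1 y_2, z_1 z_2) = \psi(y_2, z_1) \vee \psi(z_2, y_1)$, and everything else is compactness together with the standard nested-interval embedding of $2^{<\omega}$ into $\Q$.
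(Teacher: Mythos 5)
Your proposal is correct and follows essentially the same route as the paper's proof: extend the \SOP[3] sequence to a $\Q$-indexed one by compactness, index $2^{<\omega}$ by nested pairs of rationals, use $\phi_0(x,y_1)\wedge\phi_1(x,y_2)$ as the \SOP[2] formula, and take the symmetric disjunction $\psi(y_2,z_1)\vee\psi(z_2,y_1)$ as the uniform inconsistency witness, which is exactly the paper's $\theta$. The only point to tighten is that the two child intervals should be chosen with disjoint closures (not merely disjoint as open intervals), so that incomparable $\eta,\mu$ really give $r_\eta<l_\mu$ or $r_\mu<l_\eta$ strictly, as your verification of the $\psi$-clause requires.
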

This proof is based on the similar argument in Proposition 1.8 of \cite{conant_dividing_2012}, adapted to the definition of \SOP[3] at the level of theories that we use here.
\begin{proof}
Assume $T$ has \SOP[3], witnessed by formulas $\phi_0(x, y)$, $\phi_1(x, y)$ and a negation $\psi(y_1, y_2)$ of $\exists x(\phi_0(x, y_2) \wedge \phi_1(x, y_1))$. By compactness we find a sequence $(b_q)_{q \in \Q}$ such that:
\begin{enumerate}[label=(\arabic*)]
    \item for all $t \in \Q$ the set $\{ \phi_0(x,b_q) : q < t \} \cup \{\phi_1(x,b_r):r \geq t\}$ is consistent,
    \item for all $q < r$ in $\Q$ we have $\models \psi(b_q,b_r)$.
\end{enumerate}
Consider $\chi(x, y_1, y_2) := \phi_0(x, y_1) \wedge \phi_1(x, y_2)$. We inductively define a tree, indexed by $2^{<\omega}$, which will witness that $\chi$, and hence $T$, has \SOP[2].
\begin{align*}
c_\varnothing &= (b_0,b_1) , \\
c_{\eta^\frown 0} &= (b_q,b_{\frac{2}{3}q + \frac{1}{3}r}) \quad\text{for } c_\eta=(b_q,b_r), \\
c_{\eta^\frown 1} &= (b_{\frac{1}{3}q + \frac{2}{3}r}, b_r) \quad\text{for } c_\eta=(b_q,b_r).
\end{align*}
Let moreover $\theta(y_1,y_2,y_3,y_4)$ denote the formula $\psi(y_4,y_1) \lor \psi(y_2, y_3)$. 

We claim that $\theta(y_1,y_2,y_3,y_4)$ is a negation of $\exists x (\chi(x,y_1,y_2) \wedge \chi(x,y_3,y_4))$. Indeed, assume that $\theta(a,b,c,d)$ holds. Then either $\psi(d,a)$ holds or $\psi(b,c)$ holds. By definition of $\psi$ we have that in the first case $\exists x (\phi_0(x,a) \wedge \phi_1(x,d))$ does not hold, and in the second case $\exists x (\phi_0(x, c) \wedge \phi_1(x, b))$. Either way, we have that $\exists x(\phi_0(x,a) \wedge \phi_1(x,d) \wedge \phi_0(x, c) \wedge \phi_1(x, b))$ does not hold. The claim now follows from the definition of $\chi$.

We will now verify that $(c_\eta)_{\eta \in 2^{<\omega}}$ and $\theta$ witness that $\chi$ has \SOP[2]. For consistency along the branches, let $\sigma \in 2^\omega$ and $n < \omega$. Then there are $0=q_0\leq\dots\leq q_n<r_n\leq\dots\leq r_0=1$ such that for $0 \leq i \leq n$, $c_{\sigma |_i}=(b_{q_i},b_{r_i})$. Taking $t = r_n$ in (1) above, we see that
\[
\{\phi_0(x,b_{q_i}) : i \leq n\} \cup \{\phi_1(x,b_{r_i}): i \leq n\}
\]
is consistent. Thus $\{ \chi(x,c_{\sigma|_i}) : i < \omega \}$ is finitely consistent, and hence consistent.

Now let $\mu, \eta$ be incomparable. Then there are $q<r<s<t$ such that either $c_\mu=(b_q,b_r)$ and $c_\eta=(b_s,b_t)$, or $c_\mu=(b_s,b_t)$ and $c_\eta=(b_q,b_r)$. In both cases, since $r < s$, we have $\models \psi(b_r,b_s)$. That means both $\models \theta(b_q,b_r,b_s,b_t)$ and $\models \theta(b_s,b_t,b_q,b_r)$, giving $\models \theta(c_\mu, c_\eta)$ in any case. This concludes the proof.
\end{proof}

\begin{proposition}\thlabel{proposition:sop2ifftp1}
A formula $\phi(x,y)$ has \SOP[2] if and only if it has \TP[1].
\end{proposition}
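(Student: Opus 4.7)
The plan is to prove each direction by exhibiting a tree embedding between the index sets $2^{<\omega}$ and $\omega^{<\omega}$ that preserves the prefix order and incomparability. Since the inconsistency witness $\psi(y_1, y_2)$ has the same form in both \SOP[2] and \TP[1], namely a negation of $\exists x(\phi(x,y_1) \wedge \phi(x,y_2))$, the same $\psi$ can be reused throughout, and nothing in the argument will depend on having full first-order negations available.

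The implication \TP[1] $\Rightarrow$ \SOP[2] is immediate: identify $2^{<\omega}$ with the subtree of $\omega^{<\omega}$ consisting of sequences with entries in $\{0,1\}$, and restrict a \TP[1] witness $(a_\eta)_{\eta \in \omega^{<\omega}}$ to this subtree. Any $\sigma \in 2^\omega$ lies in $\omega^\omega$, so the branch-consistency is inherited, and incomparable binary strings remain incomparable as elements of $\omega^{<\omega}$, so the same $\psi$ does the job.

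For \SOP[2] $\Rightarrow$ \TP[1] I would define an injection $\iota\colon \omega^{<\omega} \to 2^{<\omega}$ recursively by $\iota(\emptyset) = \emptyset$ and $\iota(\eta^\frown n) = \iota(\eta)^\frown 0^n {}^\frown 1$, where $0^n$ denotes the binary string of $n$ zeros. A direct induction shows that $\iota$ is prefix-preserving. For preservation of incomparability, suppose $\eta, \mu \in \omega^{<\omega}$ are incomparable with maximal common initial segment $\nu$, and write $\eta = \nu^\frown n^\frown \eta'$ and $\mu = \nu^\frown m^\frown \mu'$ with $n < m$; then at position $|\iota(\nu)| + n$ the string $\iota(\eta)$ has value $1$ while $\iota(\mu)$ has value $0$, so $\iota(\eta)$ and $\iota(\mu)$ are incomparable in $2^{<\omega}$. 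Moreover, for any branch $\sigma \in \omega^\omega$ the strings $\iota(\sigma|_k)$ form an increasing chain under $\preceq$, and thus determine a branch $\tilde\sigma \in 2^\omega$ with $\{\phi(x, a_{\iota(\sigma|_k)}) : k < \omega\} \subseteq \{\phi(x, a_{\tilde\sigma|_n}) : n < \omega\}$; the latter is consistent by \SOP[2], hence so is the former. Setting $b_\eta := a_{\iota(\eta)}$ for $\eta \in \omega^{<\omega}$ then yields a \TP[1] witness for $\phi$ with the same negation $\psi$.

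The argument is essentially combinatorial bookkeeping about trees; the only point that requires actual verification is that the "encoding'' via blocks $0^n {}^\frown 1$ really does send incomparable $\omega$-sequences to incomparable binary sequences, which is the reason for inserting a terminal $1$ after each block of zeros. Since the negation $\psi$ has the same syntactic shape in both properties, the passage to positive logic is transparent and no thickness or other additional hypothesis is needed.
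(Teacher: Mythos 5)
Your proposal is correct and coincides with the paper's proof: the easy direction uses the inclusion $2^{<\omega} \subseteq \omega^{<\omega}$, and the converse uses exactly the same block encoding $\eta^\frown n \mapsto \iota(\eta)^\frown 0^n {}^\frown 1$ together with the observation that the negation $\psi$ can be reused unchanged.
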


\begin{proof}
One direction is obvious: if $(a_\eta)_{\eta \in \omega^{<\omega}}$ and $\psi(y_1,y_2)$ witness \TP[1] of $\phi$, then $(a_\eta)_{\eta \in 2^{<\omega}}$ and $\psi(y_1,y_2)$ witness \SOP[2] of $\phi$.

For the converse, let us then assume that $\phi(x,y)$ has \SOP[2], witnessed by $(a_\eta)_{\eta \in 2^{<\omega}}$ and $\psi(y_1,y_2)$. We inductively define a function $h:\omega^{<\omega} \rightarrow 2^{< \omega}$ as 
\begin{align*}
&h(\varnothing)=\varnothing , \\
&h(\eta^\frown i)=h(\eta)^\frown (0)^{i \frown} 1 \quad\text{for } i<\omega.
\end{align*}
Note that $\eta \preceq \mu$ implies $h(\eta) \preceq h(\mu)$ and so for any $\sigma \in \omega^\omega$ there is $\sigma' \in 2^\omega$ such that $\{h(\sigma|_n) : n < \omega\} \subseteq \{\sigma'|_n : n < \omega\}$.

Define a tree $(b_\eta)_{\eta \in \omega^{<\omega}}$ by $b_\eta = a_{h(\eta)}$. We verify that this tree witnesses \TP[1] for $\phi(x, y)$,  with the same negation $\psi(y_1, y_2)$.

For any $\sigma \in \omega^\omega$ there is $\sigma' \in 2^\omega$ such that
\[
\{ \phi(x, b_{\sigma|_n}) : n < \omega \} =
\{ \phi(x, a_{h(\sigma|_n)}) : n < \omega \} \subseteq
\{ \phi(x, a_{\sigma'|_n}) : n < \omega \}.
\]
The rightmost set is consistent because $(a_\eta)_{\eta \in 2^{<\omega}}$ witnesses \SOP[2] for $\phi(x, y)$, so the leftmost set is consistent.

Let $\mu, \eta \in \omega^{< \omega}$ be incomparable. Then there are $\gamma, \mu_0, \eta_0 \in \omega^{<\omega}$ and $i\neq j < \omega$ such that $\mu=\gamma^\frown i^\frown \mu_0$ and $\eta=\gamma^\frown j ^\frown \eta_0$. By definition of $h$, there are $\mu_1,\eta_1 \in 2^{<\omega}$ such that $h(\eta)=h(\gamma)^\frown (0)^{i \frown} 1 ^\frown \eta_1$ and $h(\mu)=h(\gamma)^\frown (0)^{j \frown} 1 ^\frown \mu_1$. If $i<j$ then $h(\eta)$ has a $1$ in a place where $h(\mu)$ has a $0$, and thus they are incomparable. Similarly, if $j < i$ then $h(\eta)$ and $h(\mu)$ are again incomparable. Hence, by definition of \SOP[2], $\models \psi(a_{h(\eta)}, a_{h(\mu)})$, and so $(b_\eta)_{\eta \in \omega^{<\omega}}$ and $\psi$ witness \TP[1] of $\phi$.
\end{proof}

\begin{proposition}
If a formula $\phi(x,y)$ has \SOP[2] then it has \SOP[1].
\end{proposition}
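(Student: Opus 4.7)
The plan is to observe that any witness for \SOP[2] is already a witness for \SOP[1] without any modification, so the proof reduces to unpacking the definitions. Both properties use a tree $(a_\eta)_{\eta \in 2^{<\omega}}$, a negation $\psi(y_1, y_2)$ of the same formula $\exists x(\phi(x, y_1) \wedge \phi(x, y_2))$, and the branch-consistency clause is identical in both definitions, so I just need to argue that the \SOP[1] inconsistency clause follows from the \SOP[2] one.

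The key observation is that if $\mu^\frown 0 \preceq \eta$ in $2^{<\omega}$, then $\mu^\frown 1$ and $\eta$ are incomparable: they agree on $\mu$ and then diverge at coordinate $|\mu|$, where $\mu^\frown 1$ takes value $1$ while $\eta$ (being an extension of $\mu^\frown 0$) takes value $0$. Hence neither extends the other.

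Given this, I would take any $(a_\eta)_{\eta \in 2^{<\omega}}$ and $\psi(y_1, y_2)$ witnessing \SOP[2] of $\phi$. For any $\mu, \eta \in 2^{<\omega}$ with $\mu^\frown 0 \preceq \eta$, the indices $\mu^\frown 1$ and $\eta$ are incomparable by the observation above, so the \SOP[2] clause yields $\models \psi(a_{\mu^\frown 1}, a_\eta)$. Combined with the (unchanged) branch-consistency of the paths $\{\phi(x, a_{\sigma|_n}) : n < \omega\}$ for $\sigma \in 2^\omega$, this shows that the same data witnesses \SOP[1] of $\phi$.

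There is no real obstacle: \SOP[1] is a weakening of the inconsistency pattern of \SOP[2] (one only demands inconsistency for a distinguished family of incomparable pairs rather than all of them), so the implication is structurally immediate.
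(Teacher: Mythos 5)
Your proof is correct and is essentially identical to the paper's argument: both rest on the single observation that $\mu^\frown 0 \preceq \eta$ forces $\mu^\frown 1$ and $\eta$ to be incomparable, so the \SOP[2] witness serves unchanged as an \SOP[1] witness. Nothing is missing.
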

\begin{proof}
Suppose $\phi(x,y)$ has \SOP[2], witnessed by $(a_\eta)_{\eta \in 2^{<\omega}}$ and $\psi(y_1,y_2)$. Let $\mu, \eta \in 2^{<\omega}$, and $\mu^\frown 0 \preceq \eta$. Then $\mu^\frown 1$ and $\eta$ are incomparable, so by \SOP[2] we have $\models \psi(a_{\mu^\frown 1}, a_\eta)$. Hence the second clause in the definition of \SOP[1] is satisfied. The first clause is the same as the first clause of the definition of \SOP[2], and therefore $\phi(x,y)$ has \SOP[1], witnessed again by $(a_\eta)_{\eta \in \omega^{<\omega}}$ and $\psi(y_1,y_2)$.
\end{proof}

\begin{proposition}
If a formula $\phi(x,y)$ has \SOP[1] then it has $2$-\TP.
\end{proposition}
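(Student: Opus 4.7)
The plan is to reuse exactly the re-indexing trick from the preceding proof of \SOP[2]\,$\Rightarrow$\,\TP[1] (\thref{proposition:sop2ifftp1}), since we again need to blow up a $2$-branching tree into an $\omega$-branching tree while turning a one-sided ``left-then-right'' inconsistency into inconsistency of all siblings. Suppose that $\phi(x,y)$ has \SOP[1], witnessed by $(a_\eta)_{\eta\in 2^{<\omega}}$ and a negation $\psi(y_1,y_2)$ of $\exists x(\phi(x,y_1)\wedge\phi(x,y_2))$. Define $h:\omega^{<\omega}\to 2^{<\omega}$ inductively by $h(\varnothing)=\varnothing$ and $h(\eta^\frown i)=h(\eta)^\frown(0)^{i\frown}1$, and set $b_\eta=a_{h(\eta)}$. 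I would claim that $(b_\eta)_{\eta\in\omega^{<\omega}}$ together with the same $\psi$ witnesses $2$-\TP for $\phi$.

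Consistency along branches is proved verbatim as in \thref{proposition:sop2ifftp1}: since $\eta\preceq\mu$ implies $h(\eta)\preceq h(\mu)$, for any $\sigma\in\omega^\omega$ the sequence $(h(\sigma|_n))_{n<\omega}$ is an increasing chain in $2^{<\omega}$ and its union determines some $\sigma'\in 2^\omega$ with $\{h(\sigma|_n):n<\omega\}\subseteq\{\sigma'|_m:m<\omega\}$, so $\{\phi(x,b_{\sigma|_n}):n<\omega\}\subseteq\{\phi(x,a_{\sigma'|_m}):m<\omega\}$, and the latter is consistent by \SOP[1].

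The only genuinely new calculation is checking that distinct siblings are $\psi$-inconsistent. Given $\eta\in\omega^{<\omega}$ and $i_1<i_2<\omega$, I would set $\mu:=h(\eta)^\frown(0)^{i_1}$. Then $\mu^\frown 1=h(\eta)^\frown(0)^{i_1\frown}1=h(\eta^\frown i_1)$, while $\mu^\frown 0=h(\eta)^\frown(0)^{i_1+1}$, and since $i_1+1\leq i_2$ this is a prefix of $h(\eta)^\frown(0)^{i_2\frown}1=h(\eta^\frown i_2)$; that is, $\mu^\frown 0\preceq h(\eta^\frown i_2)$. The second clause of \SOP[1] applied to $\mu$ and $h(\eta^\frown i_2)$ then gives $\models\psi(a_{\mu^\frown 1},a_{h(\eta^\frown i_2)})$, i.e.\ $\models\psi(b_{\eta^\frown i_1},b_{\eta^\frown i_2})$, as required.

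I do not expect any real obstacle: all the work is done by choosing $h$ so that, for any two siblings $\eta^\frown i_1,\eta^\frown i_2$ with $i_1<i_2$, the image of the later sibling lies in the left subtree hanging off the node $\mu^\frown 0$ where the image of the earlier sibling is $\mu^\frown 1$. That is exactly the configuration the \SOP[1] axiom controls, so the same $\psi$ that witnesses \SOP[1] also witnesses $2$-\TP.
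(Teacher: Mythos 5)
Your proposal is correct and is essentially identical to the paper's proof: the paper also reuses the map $h$ from \thref{proposition:sop2ifftp1}, keeps the same negation $\psi$, and verifies sibling-inconsistency by observing that $h(\eta)^\frown(0)^{i_1+1}\preceq h(\eta^\frown i_2)$ while $h(\eta^\frown i_1)=h(\eta)^\frown(0)^{i_1\frown}1$, exactly your choice of $\mu$. No gaps.
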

\begin{proof}
Suppose that $\phi(x,y)$ has \SOP[1], witnessed by $(a_\eta)_{\eta \in 2^{<\omega}}$ and $\psi(y_1,y_2)$. We define $h:\omega^{<\omega} \rightarrow 2^{< \omega}$ as in the proof of Proposition \ref{proposition:sop2ifftp1}. Again, we define $(b_\eta)_{\eta \in \omega^{<\omega}}$ by $b_\eta = a_{h(\eta)}$, so that for any $\sigma \in \omega^\omega$, we get that $\{ \phi(x, b_{\sigma|_n} : n < \omega \}$ is consistent.

Now let $\eta \in \omega^{<\omega}$ and let $i < j < \omega$. Then
\[
h(\eta)^\frown (0)^{i+1} \preceq h(\eta)^\frown (0)^{j \frown} 1=h(\eta ^\frown j).
\]
We have that $h(\eta^\frown i)=h(\eta)^\frown (0)^{i \frown}(1)$ and hence, by the second clause in the definition of \SOP[1] we have $\models \psi(a_{h(\eta ^\frown i)}, a_{h(\eta^ \frown j)})$. Hence $(b_\eta)_{\eta \in \omega^{< \omega}}$ and $\psi$ witness $2$-\TP for $\phi$.
\end{proof}
The following argument is based on \cite[Theorem III.7.7]{shelah_classification_1990}.
\begin{theorem}
\thlabel{thm:tp-implies-2-tp}
If $\phi(x, y)$ has \TP then for some $k'$ the conjunction $\phi(x, y_1) \wedge \ldots \wedge \phi(x, y_{k'})$ has $2$-\TP.
\end{theorem}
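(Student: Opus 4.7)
The plan is to adapt the classical argument of \cite[Theorem III.7.7]{shelah_classification_1990} to the positive-logic setting, working with $\psi$ as an inconsistency witness throughout. Begin with a $k$-TP witness $(a_\eta)_{\eta \in \omega^{<\omega}}$ together with a negation $\psi(y_1, \ldots, y_k)$ of $\exists x \bigwedge_{i=1}^{k} \phi(x, y_i)$. Stretching the tree by compactness and iteratively applying \thref{erdos-rado-indiscernible-sequences} to sibling sequences (top-down, level by level), I extract a sibling-indiscernible tree $(a_\eta)_{\eta \in \omega^{<\omega}}$: every sibling sequence $(a_{\mu^\frown n})_{n<\omega}$ is indiscernible over the part of the tree outside the subtree rooted at $\mu$. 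Replacing $k$ by the minimal value for which $\phi$ has $k$-TP, I may assume $\phi$ does not have $(k-1)$-TP, and sibling-indiscernibility then forces any $k-1$ siblings in the extracted tree to be jointly $\phi$-consistent (else indiscernibility would give a $(k-1)$-TP witness).

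If $k = 2$ take $k' = 1$. Otherwise set $k' = k - 1$ and $\phi'(x, y_1, \ldots, y_{k-1}) := \bigwedge_{i=1}^{k-1} \phi(x, y_i)$. Define the index shift $g : \omega^{<\omega} \to \omega^{<\omega}$ by $g(\eta_0^\frown n) := g(\eta_0)^\frown (n + k - 2)$, leaving the indices $0, \ldots, k-3$ free for shared extras. Define
\[
b_{\eta_0^\frown n} := \bigl(a_{g(\eta_0)^\frown 0}, a_{g(\eta_0)^\frown 1}, \ldots, a_{g(\eta_0)^\frown (k-3)}, a_{g(\eta_0^\frown n)}\bigr),
\]
and take the inconsistency witness
\[
\psi'(z_1, z_2) := \psi(z_{1,1}, \ldots, z_{1,k-1}, z_{2,k-1}) \vee \psi(z_{2,1}, \ldots, z_{2,k-1}, z_{1,k-1}),
\]
which is a negation of $\exists x (\phi'(x, z_1) \wedge \phi'(x, z_2))$ since each disjunct already contradicts a subformula of the conjunction. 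Pairwise sibling inconsistency follows immediately: for $n \neq m$ the combined entries of $b_{\eta_0^\frown n}$ and $b_{\eta_0^\frown m}$ include $k$ distinct children of $a_{g(\eta_0)}$ (at indices $0, \ldots, k-3, n+k-2, m+k-2$), and the disjunct of $\psi'$ matching their order in $\omega$ is an instance of $\psi$ on these siblings.

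The delicate step, and the expected main obstacle, is branch consistency: along $\sigma \in \omega^\omega$ the union $\bigcup_\ell b_{\sigma|_\ell}$ forms a ``wide branch'' consisting at each level of $k-1$ siblings of $a_{g(\sigma|_\ell)}$ (the $k-2$ extras together with the branch-following element). The thin sub-branch $\{a_{g(\sigma|_\ell)}\}_\ell$ is $\phi$-consistent by $k$-TP, and each level's $k-1$ siblings are $\phi$-consistent by the minimality assumption, but combining these across levels requires more. I would resolve this by strengthening the extraction to a fully tree-indiscernible (``modelling'') witness in the spirit of \cite[Section 5]{kim_tree_2014}, whose positive-logic analogue I expect to go through via iterated applications of \thref{erdos-rado-indiscernible-sequences}. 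Modelling indiscernibility reduces wide-branch consistency to the consistency of a finite tree-type, which is then verified by compactness from the two basic ingredients above. Checking that the required strong tree-indiscernibility extraction can be carried out positively is the most delicate point of the argument.
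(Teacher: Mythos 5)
There is a genuine gap, and it sits exactly where you flag it: the wide-branch consistency step. First, the tool you invoke to close it --- extraction of strongly tree-indiscernible (``modelling'') witnesses in positive logic --- is not available in this paper and is not plausibly obtained by ``iterated applications of \thref{erdos-rado-indiscernible-sequences}'': already in full first-order logic the modelling property for strongly indiscernible trees needs genuine Ramsey theory for trees, not just Ramsey/Erd\H{o}s--Rado for sequences, and the paper deliberately avoids this route (compare \thref{q:2-tp2-iff-k-tp2}, where the analogous array-modelling machinery is explicitly deferred to a separate preprint). Second, even granting full tree-modelling, your reduction does not go through: modelling only transfers finite configurations back to configurations of the same tree-shape in the original witness, and the original $k$-\TP witness gives you nothing about a path \emph{together with} $k-2$ extra siblings at each of several levels. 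Thin-path consistency plus single-level fan consistency (your ``two basic ingredients'') simply do not imply multi-level wide-branch consistency --- an element realising the fan at one level need not satisfy any $\phi(x,\cdot)$ at children further down --- so the finite tree-types you need cannot be ``verified by compactness from the two basic ingredients''. This is the combinatorial heart of the theorem, not a technicality; note also that even classically the tree-indiscernible proof does not produce $k'=k-1$ directly, but proceeds by a dichotomy (either \SOP[2]/\TP[1], or $\phi(x,y_1)\wedge\phi(x,y_2)$ has $(k-1)$-\TP) and iterates, so the arity of the conjunction can grow exponentially.

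By contrast, the paper's proof follows Shelah's original counting argument: stretch the tree to height $\kappa=|T|^+$, pass to a large family of pairwise $\phi$-inconsistent ``wide'' parameter sequences $S$ all of the same type, and run a pigeonhole/Erd\H{o}s--Rado argument on pairs to show that for some $s\in S$ one can extract $\kappa$ many finite blocks $(v^s_\alpha,\psi^s_\alpha)$ with a uniform negation $\psi$ and uniform size $k'$; these blocks, spread out by automorphisms, then assemble into a $2$-\TP tree for the conjunction. That argument never needs tree-indiscernibles, which is precisely why it adapts to positive logic with only \thref{erdos-rado-indiscernible-sequences}-style tools. Your bookkeeping for the negation $\psi'$ and the pairwise sibling inconsistency is fine, but to salvage your approach you would either have to develop positive tree-modelling (a substantial separate project) and then replace the direct $k'=k-1$ construction by the inductive dichotomy argument, or switch to the counting strategy the paper uses.
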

\begin{proof}
Let $\kappa=|T|^+$. We will find a set of parameters $A$ and a set $B \subseteq A^\kappa$ such that:
\begin{enumerate}[label=(\roman*)]
\item $|B| > |A|^{< \kappa} + 2^{2^\kappa}$;
\item if $B' \subseteq B$ and $|B'| > 2^\kappa$ then $\{ \phi(x, b(\alpha)) : b \in B', \alpha < \kappa \}$ is inconsistent; 
\item for any $b \in B$ we have that $\{ \phi(x, b(\alpha)) : \alpha < \kappa \}$ is consistent.
\end{enumerate}
Let $\lambda = \beth_\kappa(|T| + 2^\kappa)$. By compactness we find a tree $(a_\eta)_{\eta \in \lambda^{< \kappa}}$ such that for every $\sigma \in \lambda^\kappa$ the set $\{ \phi(x, a_{\sigma|_\alpha}) : \alpha < \kappa \}$ is consistent, but there exists $2 \leq k < \omega$ such that for any $\eta \in \lambda^{<\kappa}$ the set $\{ \phi(x, a_{\eta^\frown i}) : i < \lambda \}$ is $k$-inconsistent. Because of this last property we may assume that for any $\eta \in \lambda^{<\kappa}$ all the terms of $(a_{\eta^\frown i})_{i < \lambda}$ are distinct.

Write $A = \{a_\eta : \eta \in \lambda^{< \kappa}\}$ and for $\sigma \in \lambda^\kappa$ we define $b_\sigma: \kappa \to A$ by $b_\sigma(\alpha) = a_{\sigma|_\alpha}$. We claim that this $A$ together with $B = \{ b_\sigma : \sigma \in \lambda^\kappa \}$ satisfies (i)--(iii).
\begin{enumerate}[label=(\roman*)]
\item As $\cf(\lambda) = \kappa$ we have $|B| = \lambda^\kappa > \lambda = \lambda^{<\kappa} = |A|^{< \kappa}$, and the required inequality follows.  Here we use that for distinct $\sigma, \sigma' \in \lambda^\kappa$ we have $b_\sigma \neq b_{\sigma'}$, which follows from our earlier assumption that the terms of $(a_{\eta^\frown i})_{i < \lambda}$ are distinct for any $\eta \in \lambda^{<\kappa}$.
\item Let $B' \subseteq B$ be such that $\{ \phi(x, b(\alpha)) : b \in B', \alpha < \kappa \}$ is consistent, we will show that $|B'| \leq 2^\kappa$. Define $X = \{ \sigma \in \lambda^\kappa : b_\sigma \in B' \}$, so $\{ \phi(x, a_{\sigma|_\alpha}) : \sigma \in X, \alpha < \kappa \}$ is consistent. By construction of $(a_\eta)_{\eta \in \lambda^{< \kappa}}$ we then must have for all $\eta \in \lambda^{<\kappa}$ that the branches in $X$ pass through at most $k-1$ immediate successors of $\eta$, that is:
\[
|\{ i < \lambda : \text{there is } \sigma \in X \text{ such that } \eta^\frown i \preceq \sigma \}| < k.
\]
After re-indexing we then have that $X \subseteq k^\kappa$ and hence $|B'| = |X| \leq 2^\kappa$.
\item This is just consistency of $\{ \phi(x, a_{\sigma|_\alpha}) : \alpha < \kappa \}$ for every $\sigma \in \lambda^\kappa$.
\end{enumerate}
With $A$ and $B$ be as above, let $\mu = |A|^{< \kappa} + 2^{2^\kappa}$. We will find a cardinal $\kappa \leq \chi \leq 2^\kappa$ and a set $S \subseteq A^\chi$ such that:
\begin{enumerate}[label=(\arabic*)]
\item $|S| = \mu^+$;
\item for any distinct $s, s' \in S$ we have that $\{\phi(x, s(\alpha)) : \alpha < \chi\} \cup \{\phi(x, s'(\alpha)) : \alpha < \chi\}$ is inconsistent;
\item for any $s \in S$ the set $\{\phi(x, s(\alpha)) : \alpha < \chi\}$ is consistent;
\item for any $s, s' \in S$, viewing them as infinite tuples, we have $s \equiv s'$.
\end{enumerate}
First we may assume $|B| = \mu^+$. We inductively construct $U_i \subseteq B$ as follows: $U_i$ is a maximal subset such that $U_j \cap U_i = \emptyset$ for all $j < i$ and $\{\phi(x, b(\alpha)) : b \in U_i, \alpha < \kappa\}$ is consistent. Note that the latter implies that $|U_i| \leq 2^\kappa$ by (ii), which together with (iii) allows us to continue the construction until we have constructed $\{U_i\}_{i < \mu^+}$. By the pigeonhole principle we may assume that all the $U_i$ have the same cardinality. For all $i <\mu^+$, let $A_i=\{b(\alpha) :b \in U_i, \alpha <\kappa \}$, $\chi=|A_i|=\kappa \cdot |U_i|$, and $s_i \in A^\chi$ an enumeration of $A_i$. If we let $S = \{s_i : i < \mu^+\}$ then it satisfies (1)--(3), and by the pigeonhole principle we can replace $S$ by a subset to also ensure (4).

For $s \in S$ we now define pairs $(v^s_\alpha, \psi^s_\alpha)$ inductively on $\alpha < \delta_s$, where $\delta_s$ is the first $\alpha$ for which $(v^s_\alpha, \psi^s_\alpha)$ cannot be defined. We require:
\begin{enumerate}[label=(\Alph*)]
\item $v^s_\alpha \subseteq \chi$ is finite;
\item there is $\{s_n\}_{n < \omega} \subseteq S$ with $s_n(j) = s(j)$ for all $n < \omega$ and all $j \in \bigcup_{\beta < \alpha} v^s_\beta$;
\item $\psi^s_\alpha((y_\gamma)_{\gamma \in v^s_\alpha}, (y'_\gamma)_{\gamma \in v^s_\alpha})$ is a negation of $\exists x \left( \bigwedge_{\gamma \in v^s_\alpha} \phi(x, y_\gamma) \wedge \phi(x, y'_\gamma) \right)$;
\item for any distinct $n, m < \omega$ we have $\models \psi^s_\alpha((s_n(\gamma))_{\gamma \in v^s_\alpha}, (s_m(\gamma))_{\gamma \in v^s_\alpha})$.
\end{enumerate}
We will show that there is $s \in S$ such that $\delta_s \geq \kappa$. Suppose for a contradiction that $\delta_s < \kappa$ for all $s \in S$. There are $(\chi^{< \omega} \cdot |T|)^{< \kappa} = \chi^{<\kappa} \leq 2^\kappa \leq \mu$ many possible sequences $(v^s_\alpha, \psi^s_\alpha)_{\alpha < \delta_s}$. So by the pigeonhole principle there is $S_1 \subseteq S$ with $|S_1| = \mu^+$ and for all $s,s' \in S_1$ we have $\delta_s = \delta_{s'}$ and $(v^s_\alpha, \psi^s_\alpha)_{\alpha < \delta_s} = (v^{s'}_\alpha, \psi^{s'}_\alpha)_{\alpha < \delta_{s'}}$. Write $(v_\alpha, \psi_\alpha)_{\alpha < \delta} = (v^s_\alpha, \psi^s_\alpha)_{\alpha < \delta_s}$ for some $s \in S_1$. As $\delta < \kappa$ we have that $|\bigcup_{\beta < \delta} v_\beta| < \kappa$. So as $|A|^{<\kappa} \leq \mu$ we can again apply the pigeonhole principle to find $S_2 \subseteq S_1$ with $|S_2| = \mu^+$ and for any $s, s' \in S_2$ we have that $s(j) = s'(j)$ for all $j \in \bigcup_{\beta < \delta} v_\beta$.

By (2) we have that any two distinct $s, s' \in S_2$ the set $\{\phi(x, s(\alpha)) : \alpha < \chi\} \cup \{\phi(x, s'(\alpha)) : \alpha < \chi\}$ is inconsistent. So we can assign a finite $u_{s,s'} \subseteq \chi$ and $\theta_{s,s'}((y_\gamma)_{\gamma \in u_{s,s'}}, (y'_\gamma)_{\gamma \in u_{s,s'}})$ to each such a pair, such that $\theta_{s,s'}((y_\gamma)_{\gamma \in u_{s,s'}}, (y'_\gamma)_{\gamma \in u_{s,s'}})$ is a negation of $\exists x \left( \bigwedge_{\gamma \in u_{s,s'}} \phi(x, y_\gamma) \wedge \phi(x, y'_\gamma) \right)$ and $\models \theta_{s,s'}((s(\gamma))_{\gamma \in u_{s,s'}}, (s'(\gamma))_{\gamma \in u_{s,s'}})$. This defines a colouring function on $[S_2]^2$ with $(\chi^{<\omega} \cdot |T|) = \chi$ many colours. As $\mu^+ \geq (2^\chi)^+$ we can apply the Erd\H{o}s-Rado theorem to find $S_3 \subseteq S_2$ with $|S_3| = \chi^+$ such that $u = u_{s,s'}$ and $\theta = \theta_{s,s'}$ do not depend on the pair $s,s' \in S_3$. However, for any $s \in S_3$ we could now have taken $(v^s_{\delta_s}, \psi^s_{\delta_s})$ to be $(u, \theta)$, contradicting the definition of $\delta_s$. Indeed (A) and (C) follow immediately from the construction of $u$ and $\theta$. For (B) and (D) any $\{s_n\}_{n < \omega} \subseteq S_3$ suffices, which exists because $|S_3| = \chi^+$ is infinite, then (B) follows because this is also a subset of $S_2$ and (D) follows from the construction of $\theta$.

There is thus some $s \in S$ such that $\delta_s \geq \kappa$. As $\kappa = |T|^+$ there is some $k'$ and $\psi(\bar{y}, \bar{y}') := \psi(y_1, \ldots, y_{k'}, y'_1, \ldots, y'_{k'})$ such that there are infinitely many $\alpha < \kappa$ with $|v^s_\alpha| = k'$ and $\psi^s_\alpha = \psi$ (after renaming variables). For convenience we may as well assume that this happens for all $\alpha < \omega$. We will show that $\phi(x, y_1) \wedge \ldots \wedge \phi(x, y_{k'})$ has $2$-\TP. The relevant negation will be $\psi$, so we need to construct the tree $(c_\eta)_{\eta \in \omega^{<\omega}}$ of parameters. For $\alpha < \omega$ we write $\bar{s}(\alpha)$ for the tuple $(s(\beta))_{\beta \in v^s_\alpha}$. We now construct $(c_\eta)_{\eta \in \omega^{<\omega}}$ by induction on the length of $\eta \in \omega^{<\omega}$, such that for $\eta \in \omega^n$ we have that $(c_{\eta|_\alpha})_{\alpha \leq n} \equiv (\bar{s}(\alpha))_{\alpha \leq n}$.

We can simply take $c_\emptyset = \bar{s}(0)$. Now assume we have constructed $c_{\eta}$ for $\eta \in \omega^n$, we will construct $c_{\eta^\frown i}$ for all $i < \omega$. By an automorphism we may assume $(c_{\eta|_\alpha})_{\alpha \leq n} = (\bar{s}(\alpha))_{\alpha \leq n}$. Let $(s_i)_{i < \omega}$ be as in (B) for $v^s_{n+1}$. We set $c_{\eta^\frown i} = \bar{s}_i(n+1)$ for all $i < \omega$. Then we get $$c_{\eta^\frown i} (c_{\eta|_\alpha})_{\alpha \leq n} = \bar{s}_i(n+1) (\bar{s}(\alpha))_{\alpha \leq n} = \bar{s}_i(n+1) (\bar{s}_i(\alpha))_{\alpha \leq n} \equiv \bar{s}(n+1) (\bar{s}(\alpha))_{\alpha \leq n}.$$ Here the second equality follows from (B) and the third equivalence follows from (4).

%By (B) we then have that $c_{\eta^\frown i} (c_{\eta|_\alpha})_{\alpha \leq n} = \bar{s}_i(n+1) (\bar{s}(\alpha))_{\alpha \leq n} = \bar{s}_i(n+1) (\bar{s}_i(\alpha))_{\alpha \leq n}$, so the induction hypothesis follows from (4).

We are left to verify that the tree $(c_\eta)_{\eta \in \omega^{<\omega}}$ is indeed an instance of $2$-\TP. Indeed, for any $\eta \in \omega^{<\omega}$ and $i < j < \omega$ we have $\models \psi(c_{\eta^\frown i}, c_{\eta^\frown j})$ by (D). Finally, for any $\sigma \in \omega^\omega$ we have by the induction hypothesis that $(c_{\sigma|_\alpha})_{\alpha < \omega} \equiv (\bar{s}(\alpha))_{\alpha < \omega}$, so the required consistency follows from (3).
\end{proof}
\begin{corollary}
\thlabel{cor:tp-iff-2-tp}
A theory $T$ has \TP if and only if it has $2$-\TP.
\end{corollary}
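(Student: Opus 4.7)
The plan is to derive the corollary directly from \thref{thm:tp-implies-2-tp} together with the definitions.

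For the nontrivial direction, suppose $T$ has \TP. By definition there is a formula $\phi(x,y)$ and some $k \geq 2$ such that $\phi(x,y)$ has $k$-\TP, so in particular $\phi(x,y)$ has \TP. I would then apply \thref{thm:tp-implies-2-tp} to obtain some $k'$ such that the conjunction $\phi(x,y_1) \wedge \ldots \wedge \phi(x,y_{k'})$ has $2$-\TP. The key observation is that this conjunction, viewing $\bar{y} = (y_1, \ldots, y_{k'})$ as a single (tuple) variable, is itself a positive formula $\phi'(x, \bar{y})$ in $T$. Hence $\phi'(x, \bar{y})$ witnesses $2$-\TP for the theory $T$.

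The converse direction is immediate from the definition of \TP: if $T$ has $2$-\TP then some formula witnesses $k$-\TP with $k = 2$, and this is by definition a witness of \TP for $T$.

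The only mild subtlety is checking that we are allowed to repackage a tuple of parameter variables as a single variable inside the definition of $2$-\TP: this is permitted because the definition of $k$-\TP (and hence of $2$-\TP) makes no restriction on the sorts or arities of the variables $x$ and $y$, and the negation $\psi(y_1, y_2)$ of $\exists x(\phi'(x, y_1) \wedge \phi'(x, y_2))$ produced by \thref{thm:tp-implies-2-tp} applies directly. No further obstacle is expected, since all the combinatorial work has already been done in \thref{thm:tp-implies-2-tp}.
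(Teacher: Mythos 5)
Your proposal is correct and matches the paper's intent: the corollary is stated without a proof precisely because it follows immediately from \thref{thm:tp-implies-2-tp} by repackaging the conjunction $\phi(x,y_1)\wedge\ldots\wedge\phi(x,y_{k'})$ as a single formula $\phi'(x,\bar y)$ witnessing $2$-\TP, together with the trivial converse that $2$-\TP is a special case of \TP. No gaps.
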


\begin{proposition}\thlabel{prop:tp-implies-many-types}
Suppose $\phi(x,y)$ has $2$-\TP. Then there exists an infinite set $B$ such that $|\S_\phi (B)| > (|B| + |T|)^{|T|}$.
\end{proposition}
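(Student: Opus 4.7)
The plan is to enlarge the $2$-\TP tree by compactness until it has so many branches that even the weak stability bound $(|B|+|T|)^{|T|}$ on $|\S_\phi(B)|$ from \thref{thm:stable-formula} is exceeded. Concretely, I would set $\kappa := \beth_{|T|^+}$ and work with an index tree of shape $\kappa^{<|T|^+}$. Over $B := \{a_\eta : \eta \in \kappa^{<|T|^+}\}$ I aim to produce $\kappa^{|T|^+}$ many distinct $\phi$-types, while $(|B|+|T|)^{|T|}$ evaluates to just $\kappa$, and König's theorem will then separate these cardinals.

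First I would produce by compactness a tree $(a_\eta)_{\eta \in \kappa^{<|T|^+}}$ together with realisers $(c_\sigma)_{\sigma \in \kappa^{|T|^+}}$ such that $\models \psi(a_{\eta^\frown i}, a_{\eta^\frown j})$ for every $\eta$ and $i < j < \kappa$, and $\models \phi(c_\sigma, a_{\sigma|_\alpha})$ for every $\sigma \in \kappa^{|T|^+}$ and $\alpha < |T|^+$. Finite satisfiability of the relevant partial type reduces to embedding a finite subtree of $\kappa^{<|T|^+}$ into $\omega^{<\omega}$ and using finite initial segments of branches of the original $2$-\TP witness: siblings in the finite subtree can be sent to siblings in $\omega^{<\omega}$ so the $\psi$-clauses survive, and the $\phi$-clauses follow from consistency of any branch through the relevant nodes of the $2$-\TP tree.

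With this tree in hand, for each $\sigma \in \kappa^{|T|^+}$ I set $p_\sigma := \tp_\phi(c_\sigma / B)$. If $\sigma \neq \sigma'$ and $\alpha$ is the first level at which they disagree, then $a_{\sigma|_{\alpha+1}}$ and $a_{\sigma'|_{\alpha+1}}$ are distinct immediate successors of the common node indexed by $\sigma|_\alpha$; whichever of the two has the smaller index gives $\models \psi$ on the pair, so $\phi(x, a_{\sigma|_{\alpha+1}}) \wedge \phi(x, a_{\sigma'|_{\alpha+1}})$ is unsatisfiable. Since $c_\sigma$ realises $\phi(x, a_{\sigma|_{\alpha+1}})$ and $c_{\sigma'}$ realises $\phi(x, a_{\sigma'|_{\alpha+1}})$, exactly one of these two formulas lies in each of $p_\sigma, p_{\sigma'}$, and so $p_\sigma \neq p_{\sigma'}$. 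The same inconsistency forces siblings $a_{\eta^\frown i}$ to be pairwise distinct, which together with the trivial upper bound yields $|B| = \kappa$.

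The remaining step is cardinal arithmetic. Since $\kappa = \beth_{|T|^+}$ is a strong limit with $\cf(\kappa) = |T|^+ > |T|$, a standard calculation gives $\kappa^{|T|} = \kappa$, hence $(|B|+|T|)^{|T|} = \kappa$. On the other hand König's theorem gives $\kappa^{|T|^+} \geq \kappa^{\cf(\kappa)} > \kappa$, so $|\S_\phi(B)| \geq \kappa^{|T|^+} > (|B|+|T|)^{|T|}$, which is what we want. The main obstacle I foresee is bookkeeping the compactness step cleanly — simultaneously encoding $|T|^+$-long branch realisability and sibling $\psi$-inconsistency as a finitely satisfiable partial type — and pinning down the cardinal $\kappa$ so that both $\kappa^{|T|} = \kappa$ (keeping $|B|$ small) and $\kappa^{|T|^+} > \kappa$ (producing strictly too many branches) hold. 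This second requirement forces $\cf(\kappa) \leq |T|^+$ and combined with the first (which needs $\cf(\kappa) > |T|$) pins down $\cf(\kappa) = |T|^+$, explaining the choice $\kappa = \beth_{|T|^+}$.
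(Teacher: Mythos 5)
Your proof is correct and follows essentially the same strategy as the paper's: stretch the $2$-\TP tree by compactness so that the set $B$ of tree parameters has cardinality $\kappa = \beth_{|T|^+}$ (chosen so that $\kappa^{|T|} = \kappa$), and observe that distinct branches yield distinct $\phi$-types via the sibling-inconsistency witness $\psi$. The only difference is a transposition of the tree dimensions: the paper stretches the height to $\kappa$ while keeping branching width $\omega$, obtaining $|\omega^\kappa| = 2^\kappa > \kappa$ branches by Cantor, whereas you stretch the branching width to $\kappa$ while keeping height $|T|^+$, obtaining $\kappa^{|T|^+} > \kappa$ branches by K\H{o}nig; in both cases $|B| = \kappa$ and the compactness/embedding step is equally routine.
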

\begin{proof}
Let $\kappa = \beth_{|T|^+}$, then $\omega^{<\kappa} = \kappa$ and $\kappa^{|T|} = \kappa$. To see the latter we note that for any $f: |T| \to \kappa$, there is $\alpha < |T|^+$ such that the image of $f$ is contained in $\beth_\alpha$. Hence $\kappa^{|T|} = \bigcup_{\alpha < |T|^+} \beth_\alpha^{|T|}$, and $\beth_\alpha^{|T|} \leq \beth_{\alpha+1}$, from which the equality follows.

We assume $\phi(x,y)$ has $2$-\TP, so by compactness we find $(b_\eta)_{\eta \in \omega^{< \kappa}}$ and a negation $\psi(y_1,y_2)$ of the formula $\exists x  (\phi(x, y_1) \wedge \phi(x, y_2))$ witnessing $2$-\TP.

Let $B = \{ b_\eta : \eta \in \omega^{< \kappa} \}$. For $\sigma \in \omega^\kappa$ let $a_\sigma$ be a realisation of $\{ \phi(x, b_{\sigma|_\alpha}): \alpha < \kappa \}$. Given distinct $\sigma_1, \sigma_2 \in \omega^\kappa$, we have $\tp_\phi(a_{\sigma_1}/B) \neq \tp_\phi(a_{\sigma_2}/B)$. Indeed, let $\eta \in \omega^{< \kappa}$ be such that $\eta \preceq \sigma_1, \sigma_2$ but there are $i \neq j < \omega$ such that $\eta^\frown i \preceq \sigma_1$ and $\eta^\frown j \preceq \sigma_2$. Without loss of generality, assume $i < j$. Then $\models \psi(b_{\eta^\frown i}, b_{\eta^\frown j})$ and so because we have $\models \phi(a_{\sigma_1}, b_{\eta^\frown i})$ and $\models \phi(a_{\sigma_2}, b_{\eta^\frown j})$ we cannot have $\models \phi(a_{\sigma_2}, b_{\eta^\frown i})$.

We thus find $\omega^\kappa > \kappa$ many types in $\S_\phi(B)$, while at the same time $(|B| + |T|)^{|T|} = (\omega^{<\kappa} + |T|)^{|T|} = \kappa^{|T|} = \kappa$ by our choice of $\kappa$.
\end{proof}

\begin{corollary}
\thlabel{2-tp-implies-op}
If a formula $\phi(x,y)$ has $2$-\TP then it has \OP.
\end{corollary}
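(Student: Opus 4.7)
The plan is simply to combine the preceding proposition with the stable-formula characterisation from Section~\ref{sec:positive-stability}. Concretely, \thref{prop:tp-implies-many-types} supplies an infinite parameter set $B$ for which
\[
|\S_\phi(B)| > (|B| + |T|)^{|T|},
\]
which is precisely the negation of condition~(ii) in \thref{thm:stable-formula}. Thus $\phi(x,y)$ fails to be stable.

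By the equivalence (i)~$\Leftrightarrow$~(iii) of \thref{thm:stable-formula}, any formula that is not stable has \OP, and we are done. So the entire proof reduces to one sentence invoking \thref{prop:tp-implies-many-types} and \thref{thm:stable-formula}.

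There is no genuine obstacle here: the work has already been done in the previous proposition (which handled the Erd\H{o}s--Rado/counting argument) and in \thref{lem:nop-implies-stable} (which provided the non-trivial direction of the stable-formula characterisation, namely that many $\phi$-types force \OP). The only thing to double-check when writing it up is that the type-counting bound violated in \thref{prop:tp-implies-many-types} matches exactly the bound appearing in clause~(ii) of \thref{thm:stable-formula}, which it does by design.
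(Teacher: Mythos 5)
Your proposal is correct and is exactly the paper's argument: the corollary is deduced by combining \thref{prop:tp-implies-many-types} with the equivalences of \thref{thm:stable-formula}, with the only content being the observation that the violated type-counting bound matches clause~(ii) there. Nothing further is needed.
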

\begin{proof}
By \thref{prop:tp-implies-many-types} and \thref{thm:stable-formula}.
\end{proof}

\begin{proposition}
If a formula $\phi(x,y)$ has $k$-\TP[2] then it has $k$-\TP.
\end{proposition}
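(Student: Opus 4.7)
The plan is to repurpose the $k$-\TP[2] array directly as a tree witnessing $k$-\TP, keeping the very same negation $\psi$. Given an array $(a_{i,j})_{i,j < \omega}$ witnessing $k$-\TP[2], the natural idea is to arrange the tree so that the children of any level-$n$ node correspond to the elements of row $n$ of the array: concretely, set $b_{\eta^\frown j} = a_{|\eta|, j}$ for every $\eta \in \omega^{<\omega}$ and $j < \omega$, or equivalently $b_\eta = a_{|\eta|-1, \eta(|\eta|-1)}$ whenever $|\eta| \geq 1$. This assignment is engineered precisely so that the row-inconsistency clause of $k$-\TP[2] becomes the sibling-inconsistency clause of $k$-\TP, while choosing $\sigma(i)$ in row $i$ along a branch of the tree translates back to a column selector of the original $k$-\TP[2] array.

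With this setup in place, I would first verify clause (2) of $k$-\TP: for any $\eta \in \omega^{<\omega}$ and $i_1 < \cdots < i_k < \omega$, the parameters $b_{\eta^\frown i_1}, \ldots, b_{\eta^\frown i_k}$ all lie in row $|\eta|$ of the array, and hence $\models \psi(b_{\eta^\frown i_1}, \ldots, b_{\eta^\frown i_k})$ follows directly from clause (2) of $k$-\TP[2]. Next, for clause (1), along any branch $\sigma \in \omega^\omega$ the formulas $\phi(x, b_{\sigma|_n})$ for $n \geq 1$ recover exactly the family $\{\phi(x, a_{i, \sigma(i)}) : i < \omega\}$, whose consistency is clause (1) of $k$-\TP[2].

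The main obstacle, mild but genuine, will be defining $b_\emptyset$ so that the full set $\{\phi(x, b_{\sigma|_n}) : n < \omega\}$ (which now also contains $\phi(x, b_\emptyset)$) remains consistent for every branch $\sigma$. A naive choice of $b_\emptyset$ taken from row $0$ is doomed, since by clause (2) of $k$-\TP[2] any such element is $\psi$-inconsistent with $k-1$ other parameters from that same row, and hence incompatible with many branches. To sidestep this I would appeal to compactness to extend the $k$-\TP[2] pattern to an array indexed by $(\{-1\} \cup \omega) \times \omega$, where the new row $-1$ contains only one element $a_{-1,0}$; the inconsistency clause (2) on row $-1$ is then vacuous (there are fewer than $k$ elements available), while the consistency clause (1) for the extended array guarantees that this extra element is compatible with every column selection from the original rows. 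Setting $b_\emptyset := a_{-1,0}$ completes the tree, and branch consistency for $(b_\eta)_{\eta \in \omega^{<\omega}}$ reduces immediately to clause (1) of $k$-\TP[2] applied to the extended array with selector $\sigma'(-1) = 0$ and $\sigma'(i) = \sigma(i)$ for $i \geq 0$.
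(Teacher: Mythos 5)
Your proof is correct and follows essentially the same route as the paper's: row $n$ of the array supplies the siblings at a fixed level of the tree, and the same negation $\psi$ witnesses the sibling-inconsistency clause. The only divergence is the root, which the paper handles more economically by setting $b_\emptyset = a_{0,0}$ and drawing the children of a depth-$n$ node from row $n+1$ (a shift that makes your extra row $-1$ and the compactness step unnecessary); your compactness argument does work, but the finite-satisfiability check you leave implicit is verified by exactly this observation, namely witnessing the new element by an entry of a far-away, otherwise unused row.
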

\begin{proof}
Suppose that $\phi(x,y)$ has $k$-\TP[2], witnessed by $(a_{i,j})_{i,j < \omega}$ and $\psi(y_1,\ldots, y_k)$. We construct $(b_\eta)_{\eta\in\omega^{<\omega}}$ such that together with $\psi(y_1, \ldots, y_k)$ they witness $k$-\TP. For $\eta\in\omega^{<\omega}$, let $\ell(\eta)$ be the length (domain) of $\eta$ and let $t(\eta)$ be the last element of $\eta$ and $t(\emptyset) = 0$. Define $b_\eta = a_{\ell(\eta), t(\eta)}$.

For any $\sigma\in \omega^\omega$ we have that $\{ \phi(x, b_{\sigma|_n}) : n < \omega \} = \{ \phi(x, a_{n, t(\sigma|_n)}) : n < \omega\}$ is consistent. Let now $\eta \in \omega^{< \omega}$, and write $n = \ell(\eta) +1$. Then for any $i_1 < \ldots < i_k < \omega$ we have $\models \psi(a_{n, i_1}, \ldots, a_{n,i_k})$. This is the same as $\models \psi(b_{\eta^\frown i_1}, \ldots, b_{\eta^\frown i_k})$. Hence, $(b_\eta)_{\eta\in\omega^{<\omega}}$ and $\psi(y_1, \ldots, y_k)$ witness $\phi(x,y)$ having $k$-\TP.
\end{proof}

\begin{proposition}
If a theory $T$ has $2$-\TP[2] then it has \IP.
\end{proposition}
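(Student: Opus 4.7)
The plan is to work on the level of formulas: I will show that if $\phi(x,y)$ witnesses $2$-$\TP_2$, then some formula built from $\phi$ and its associated witness $\psi$ will witness $\IP$. The subtle point in positive logic is that the witness $\psi(y_1,y_2)$ is only a negation of $\exists x(\phi(x,y_1) \wedge \phi(x,y_2))$, not of $\phi$ itself, so I cannot directly read off an $\IP$-configuration for $\phi$. The idea is to fold the extra hypothesis $\phi(x,y_1)$ into the formula I use for \IP, so that $\psi$ supplies the required negation.

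Concretely, let $\phi(x,y)$ and $\psi(y_1,y_2)$ witness $2$-$\TP_2$ with array $(a_{i,j})_{i,j<\omega}$. For every $\sigma\in 2^\omega\subseteq\omega^\omega$, clause (1) of $2$-$\TP_2$ gives a realisation $c_\sigma$ of $\{\phi(x, a_{i,\sigma(i)}) : i<\omega\}$. Define the formulas
\[
\alpha(x_1 x_2, y) := \phi(y, x_2), \qquad \beta(x_1 x_2, y) := \phi(y, x_1) \wedge \psi(x_1, x_2).
\]
Then $\beta$ is a negation of $\alpha$: $\exists x_1 x_2 y(\alpha \wedge \beta)$ would give $x_1, x_2, y$ with $\phi(y,x_1) \wedge \phi(y,x_2) \wedge \psi(x_1,x_2)$, contradicting the assumption on $\psi$. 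I will take $\hat a_i := a_{i,0}a_{i,1}$ as the ``probe'' parameters and $c_\sigma$ as the ``target'' parameters for $\IP$ of $\alpha$.

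It remains to check the two clauses of $\IP$. If $\sigma(i) = 1$, then $\models \phi(c_\sigma, a_{i,1})$ by the choice of $c_\sigma$, so $\models \alpha(\hat a_i, c_\sigma)$. If instead $\sigma(i) = 0$, then $\models \phi(c_\sigma, a_{i,0})$ from the realisation, and clause (2) of $2$-$\TP_2$ applied to $0<1$ gives $\models \psi(a_{i,0}, a_{i,1})$, whence $\models \beta(\hat a_i, c_\sigma)$. This furnishes the $\IP$-configuration for $\alpha$ with negation $\beta$.

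The only place where real work is needed is identifying the right negation, since the ``naive'' attempt of using $\psi$ directly as a negation of $\phi$ does not typecheck: the obstacle is purely the positive-logic subtlety that $\psi$ witnesses inconsistency of a conjunction rather than of $\phi$ alone, and the device of enlarging the parameter tuple by a row-index and embedding $\phi(y,x_1)$ into $\beta$ sidesteps this. Everything else is a direct verification, with no Ramsey or compactness manipulation required because $2$-$\TP_2$ already provides both the array and the branch realisations.
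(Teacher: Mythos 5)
Your proof is correct and is essentially identical to the paper's own argument: the same padded formula $\phi(t,z_2)$ with negation $\phi(t,z_1)\wedge\psi(z_1,z_2)$, the same parameter tuples $a_{i,0}a_{i,1}$, and the same verification using the branch realisations $c_\sigma$ for $\sigma\in 2^\omega$.
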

\begin{proof}
Assume $T$ has $2$-\TP[2] witnessed by the formula $\phi(x,y)$, $(a_{i,j})_{i, j < \omega}$ and a negation $\psi(y_1,y_2)$ of the formula $\exists x (\phi(x, y_1) \wedge \phi(x, y_2))$. Then for every $\sigma \in 2^\omega \subseteq \omega^\omega$, there exists $c_\sigma$ such that for all $i < \omega$ we have $\models \phi(c_\sigma, a_{i, \sigma(i)})$.

Consider the formulas $\chi(z_1 z_2, t) := \phi(t, z_2) $ and $\xi(z_1 z_2, t) := \phi(t, z_1) \land \psi(z_1, z_2) $. Also for $i<\omega$ let $b_i$ be the tuple $a_{i,0} \ a_{i,1}$. We are going to show that $\chi$ has \IP witnessed by $(b_i)_{i< \omega}$, $(c_\sigma)_{\sigma\in 2^\omega}$ and $\xi$.

First of all note that $\xi$ is indeed a negation of $\chi$, since
\[
T \models \neg \exists z_1 z_2 t \ (\phi(t, z_2) \land \phi(t, z_1) \land \psi(z_1, z_2)).
\]
Now take any $i < \omega$ and $\sigma \in 2^\omega$. If $\sigma(i) = 1$, then we have $\models \phi(c_\sigma, a_{i, 1})$ and therefore $\models \chi(b_i, c_\sigma)$. If $\sigma(i) = 0$, then we have $\models \phi(c_\sigma, a_{i, 0})$ as well as $\models \psi(a_{i,0}, a_{i, 1})$ and therefore $\models \xi(b_i, c_\sigma)$. Hence, $(b_i)_{i< \omega}$, $(c_\sigma)_{\sigma\in 2^\omega}$ and $\xi$ witness \IP of $\chi$ and $T$ has \IP.
\end{proof}

\begin{proposition}
If a formula $\phi(x,y)$ has \IP then it has \OP.
\end{proposition}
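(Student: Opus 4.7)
The plan is to reuse the same sequence $(a_i)_{i<\omega}$ and the same negation $\psi(x,y)$ from the witness of \IP, and to extract the order-property witness $(b_j)_{j<\omega}$ from the family $(c_\sigma)_{\sigma \in 2^\omega}$ by choosing appropriate initial-segment indicator functions. The key observation is that the definition of \IP in positive logic already hands us $c_\sigma$ for \emph{every} $\sigma \in 2^\omega$, so no compactness manoeuvre is needed to produce the required parameters; everything reduces to selecting the right $\sigma$'s.

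Concretely, I would define, for each $j < \omega$, the function $\sigma_j \in 2^\omega$ by
\[
\sigma_j(i) = \begin{cases} 1 & \text{if } i < j, \\ 0 & \text{if } i \geq j, \end{cases}
\]
and then set $b_j := c_{\sigma_j}$. By the defining property of \IP applied with $\sigma = \sigma_j$, we have $\models \phi(a_i, b_j)$ whenever $\sigma_j(i) = 1$, i.e.\ whenever $i < j$, and $\models \psi(a_i, b_j)$ whenever $\sigma_j(i) = 0$, i.e.\ whenever $i \geq j$. This is exactly the configuration required by \thref{def:order-property}, so $(a_i)_{i<\omega}$, $(b_j)_{j<\omega}$ and $\psi(x,y)$ witness \OP for $\phi(x,y)$.

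There is essentially no obstacle here: the role of the negation $\psi(x,y)$ built into the positive formulation of \IP is precisely what makes the implication to \OP transparent, because $\psi$ can simply be carried over to certify the ``negative'' half of the order configuration, avoiding any appeal to full first-order negation.
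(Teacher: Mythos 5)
Your proof is correct and is essentially identical to the paper's own argument: both select the indicator functions $\sigma_j$ of the initial segments $\{i : i < j\}$, take $b_j = c_{\sigma_j}$, and reuse the same $(a_i)_{i<\omega}$ and negation $\psi(x,y)$ to witness \OP. Nothing further is needed.
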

\begin{proof}
Suppose $\phi(x, y)$ has \IP, witnessed by $(a_i)_{i < \omega}$, $(c_\sigma)_{\sigma\in 2^\omega}$ and $\psi(x,y)$. We use the same $\psi(x,y)$ and $(a_i)_{i < \omega}$ to show that $\phi(x,y)$ has \OP. Let $\sigma_j \in 2^\omega$ be defined by 
\[
   \sigma_j(i) =
  \begin{cases}
    1 & \text{if $i<j$,} \\
    0 & \text{if $i \geq j$.}
  \end{cases}
\]
Then we get 
\begin{align*}
&\models \phi(a_i, c_{\sigma_j}) \quad\text{if} \ i < j, \\
&\models \psi(a_i, c_{\sigma_j}) \quad\text{if} \ i \geq j.
\end{align*}
Therefore, $\phi(x,y)$ has \OP, witnessed by $(a_i)_{i < \omega}$, $(c_{\sigma_j})_{j < \omega}$ and $\psi(x,y)$.
\end{proof}

\section{Interactions with independence relations}
\label{sec:interactions-with-independence-relations}
In this section we study the interaction between independence relations and some of the combinatorial properties studied above. We first recall the notion of dividing and the corresponding definition of simplicity, and the way different notions of independence interact with a theory being \NSOP[1], simple or stable. We do not define Kim-dividing or use it directly; we rely on the axiomatic characterization of the notion of independence given in \cite[Theorem 9.1]{dobrowolski_kim-independence_2022}.
\begin{definition}
\thlabel{def:dividing}
Let $p(x, b) = \tp(a/Cb)$ be a type. We say that $p(x, b)$ \emph{divides} over $C$ if there is a $C$-indiscernible sequence $(b_i)_{i < \omega}$, with $b_0 \equiv_C b$, such that $\bigcup_{i < \omega} p(x, b_i)$ is inconsistent. We write $a \ind^d_C b$ if $\tp(a/Cb)$ does not divide over $C$.
\end{definition}
\begin{lemma}
\thlabel{psi-dividing}
A type $p(x, b) = \tp(a/Cb)$ divides over $C$ if and only if it contains a formula $\phi(x, b) \in p(x, b)$ and there are a negation $\psi(y_1, \ldots, y_k)$ of $\exists x(\phi(x, y_1) \wedge \ldots \wedge \phi(x, y_k))$ and some infinite sequence $(b_i)_{i < \omega}$ such that $b_i \equiv_C b$ for all $i < \omega$ and for all $i_1 < \ldots < i_k < \omega$ we have $\models \psi(b_{i_1}, \ldots, b_{i_k})$.
\end{lemma}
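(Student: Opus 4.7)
I would split the equivalence into its two directions.

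For the direction $(\Leftarrow)$, start from the given formula $\phi(x,b)\in p(x,b)$, negation $\psi(y_1,\ldots,y_k)$, and sequence $(b_i)_{i<\omega}$. First, observe that the conditions ``$b_i\equiv_C b$'' and ``$\models\psi(b_{i_1},\ldots,b_{i_k})$ for every increasing $k$-tuple'' form a partial $C$-type in infinitely many variables that is finitely satisfied by the given sequence, so by compactness I can elongate it to any length $\lambda$ required by Erd\H{o}s--Rado. I then apply \thref{erdos-rado-indiscernible-sequences} to extract a $C$-indiscernible sequence $(b'_i)_{i<\omega}$ based on this elongation. The based-on clause transfers both conditions to $(b'_i)$: taking the initial segment of length $1$ gives $b'_i\equiv_C b$ for all $i$, and taking length $k$ together with indiscernibility gives $\models\psi(b'_{i_1},\ldots,b'_{i_k})$ for every $i_1<\ldots<i_k$. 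Then $\bigcup_{i<\omega} p(x,b'_i)$ is inconsistent: any realiser $a'$ would satisfy $\models\phi(a',b'_0)\wedge\ldots\wedge\phi(a',b'_{k-1})$, hence witness $\exists x(\phi(x,b'_0)\wedge\ldots\wedge\phi(x,b'_{k-1}))$, directly contradicting $\models\psi(b'_0,\ldots,b'_{k-1})$ and the fact that $\psi$ is a negation of that existential formula. So $p$ divides over $C$.

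For the direction $(\Rightarrow)$, let $(b_i)_{i<\omega}$ be a $C$-indiscernible sequence with $b_0\equiv_C b$ and $\bigcup_{i<\omega} p(x,b_i)$ inconsistent. By compactness there is a finite inconsistent subset; since positive formulas are closed under $\wedge$, this collapses to a single formula $\phi(x,y)\in p$ (with $y$ corresponding to the parameter $b$) together with a finite set of indices. By $C$-indiscernibility I may take those indices to be $\{0,\ldots,k-1\}$, and indiscernibility then propagates the $k$-inconsistency to every increasing $k$-tuple of indices. In particular $\not\models\exists x(\phi(x,b_0)\wedge\ldots\wedge\phi(x,b_{k-1}))$, and since the monster is e.c.\ there is a positive negation $\psi(y_1,\ldots,y_k)$ of $\exists x(\phi(x,y_1)\wedge\ldots\wedge\phi(x,y_k))$ with $\models\psi(b_0,\ldots,b_{k-1})$. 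A final use of indiscernibility yields $\models\psi(b_{i_1},\ldots,b_{i_k})$ for every $i_1<\ldots<i_k$, while $b_i\equiv_C b_0\equiv_C b$ is automatic.

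The main obstacle is conceptual rather than technical: in positive logic ``inconsistency'' is not itself positively definable, so one cannot merely assert that some finite subset is inconsistent and stop there — one needs an actual positive formula $\psi$ that uniformly witnesses the failure of a common realiser along the sequence. This is exactly what the e.c.\ property of the monster delivers (whenever an existential positive formula fails, some positive negation of it is satisfied), and $C$-indiscernibility then distributes the single witness along all increasing $k$-tuples. In the backward direction, Erd\H{o}s--Rado (\thref{erdos-rado-indiscernible-sequences}) plays the analogous role, replacing the Ramsey-based indiscernible extraction that suffices in full first-order logic.
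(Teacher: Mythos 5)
Your proposal is correct and follows essentially the same route as the paper's proof: the forward direction extracts a single formula and an inconsistency-witnessing negation $\psi$ via compactness and the e.c.\ property, then spreads it along the sequence by $C$-indiscernibility, while the converse elongates the sequence by compactness and applies \thref{erdos-rado-indiscernible-sequences} to obtain an indiscernible sequence on which $\psi$ forces inconsistency of $\bigcup_i p(x,b'_i)$. The only differences are presentational (e.g.\ your explicit remark on collapsing the finite inconsistent set to one formula by conjunction), so nothing further is needed.
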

The above lemma is the positive variant of $k$-dividing (see e.g.\ \cite[Definition 7.1.2]{tent_course_2012}). The role of $k$ is replaced by $\psi$, and accordingly we say in the situation of \thref{psi-dividing} that $\phi(x, b)$ \emph{$\psi$-divides over $C$}. The proof is standard, but instructive on how $\psi$ is used.
\begin{proof}
If $p(x, b)$ divides then let $(b_i)_{i < \omega}$ be an indiscernible sequence witnessing this. By compactness there is $\phi(x, b) \in p(x, b)$ and some $k < \omega$ such that $\{\phi(x, b_1), \ldots, \phi(x, b_k)\}$ is inconsistent. So there is a negation $\psi(y_1, \ldots, y_k)$ of $\exists x(\phi(x, y_1) \wedge \ldots \wedge \phi(x, y_k))$ with $\models \psi(b_1, \ldots, b_k)$. It then follows by $C$-indiscernibility that for all $i_1 < \ldots < i_k < \omega$ we have $\models \psi(b_{i_1}, \ldots, b_{i_k})$.

Conversely, suppose that $\phi(x, b) \in p(x, b)$ $\psi$-divides over $C$. Let $(b_i)_{i < \omega}$ be the infinite sequence witnessing this. By compactness we may elongate $(b_i)_{i < \omega}$ to $(b_i)_{i < \lambda}$ for suitably large $\lambda$. Then by \thref{erdos-rado-indiscernible-sequences} we find an indiscernible sequence $(b'_i)_{i < \omega}$ based on $(b_i)_{i < \lambda}$. In particular, there are $i_1 < \ldots < i_k < \lambda$ such that $b'_1 \ldots b'_k \equiv_C b_{i_1} \ldots b_{i_k}$. As we have $\models \psi(b_{i_1}, \ldots, b_{i_k})$, we thus have $\models \psi(b'_1, \ldots, b'_k)$. So $\{\phi(x, b'_1), \ldots, \phi(x, b'_k)\}$ is inconsistent and therefore that $\bigcup_{i < \omega} p(x, b'_i)$ is inconsistent. We conclude that $p(x, b)$ divides over $C$.
\end{proof}
\begin{definition}
\thlabel{def:simple-theory}
We say that a theory $T$ is \emph{simple} if \emph{dividing has local character}. That is, there is some cardinal $\lambda$ such that for any finite $a$ and any parameter set $B$ there is $B_0 \subseteq B$ with $|B_0| \leq \lambda$ such that $\tp(a/B)$ does not divide over $B_0$.
\end{definition}
To make independence work nicely in simple and \NSOP[1] positive theories we need the mild assumption of thickness from \cite{ben-yaacov_thickness_2003}. Note that in particular every theory in full first-order logic, viewed as a positive theory, is thick.
\begin{definition}
\thlabel{def:thickness}
A theory $T$ is called \emph{thick} if being an indiscernible sequence is type-definable. So there is a partial type $\Theta((x_i)_{i < \omega})$ such that $\models \Theta((a_i)_{i < \omega})$ iff $(a_i)_{i < \omega}$ is an indiscernible sequence.
\end{definition}
\begin{definition}
\thlabel{def:lascar-distance-and type}
We write $\d_B(a, a') \leq n$ if there are $a = a_0, a_1, \ldots, a_n = a'$ such that $a_i$ and $a_{i+1}$ are on a $B$-indiscernible sequence for all $0 \leq i < n$. We say that $a$ and $a'$ have the same \emph{Lascar strong type (over $B$)}, and write $a \equivls_B a'$, if $\d_B(a, a') \leq n$ for some $n < \omega$.
\end{definition}
\begin{fact}[{\cite[Lemma 2.20]{dobrowolski_kim-independence_2022}}]
\thlabel{fact:thick-type-over-lambda-t-saturated-model-is-lascar-strong}
Let $T$ be a thick theory and $M$ a $\lambda_T$-saturated e.c.\ model, then $a \equiv_M a'$ implies $a \equivls_M a'$.
\end{fact}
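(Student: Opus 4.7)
The plan is to prove $a \equivls_M a'$ by exhibiting a single $M$-indiscernible sequence on which both $a$ and $a'$ appear; this immediately gives $\d_M(a,a') \leq 1$ and hence $a \equivls_M a'$.

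First I would use thickness to recast the problem as a consistency question. By \thref{def:thickness} there is a partial type $\Theta((x_i)_{i<\omega})$ expressing ``$(x_i)_{i<\omega}$ is an indiscernible sequence''. Adjoining parameters from $M$ in the standard way, we obtain a partial type $\Theta_M$ characterising $M$-indiscernibility. It then suffices to show that the partial type
\[
\Sigma\bigl((x_i)_{i \geq 2}\bigr) \;:=\; \Theta_M(a, a', x_2, x_3, \ldots)
\]
in variables $(x_i)_{i \geq 2}$ with parameters $a, a', M$ is consistent: any realisation yields an $M$-indiscernible sequence starting with $a, a'$. By compactness it is enough to verify that every finite sub-type is realised, which amounts to finding, for each $n < \omega$, elements $b_2, \ldots, b_{n-1}$ such that $(a, a', b_2, \ldots, b_{n-1})$ starts an $M$-indiscernible sequence.

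Second, to produce such initial segments I would exploit the pair $(a, a')$ directly. Using the saturation of the monster, realise a long sequence $\bigl((a_\alpha, a'_\alpha)\bigr)_{\alpha < \lambda}$ of $2$-tuples in $\MM$, each with $(a_\alpha, a'_\alpha) \equiv_M (a, a')$, where $\lambda$ is large enough for \thref{erdos-rado-indiscernible-sequences} to apply. Extracting from this sequence of $2$-tuples produces an $M$-indiscernible sequence of pairs $(d_i, d'_i)_{i < \omega}$; by homogeneity over $M$ we may move $(d_0, d'_0)$ to $(a, a')$, so the sequence of pairs begins with $(a, a')$.

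The main obstacle is the third step: the sequence extracted above is only $M$-indiscernible as a sequence of $2$-tuples, whereas I need a genuine singleton-indiscernible sequence of length $n$ whose first two entries are $a$ and $a'$. This is precisely where $\lambda_T$-saturation of $M$ (as opposed to saturation of $\MM$) is used. The idea is to build the required finite segment inductively: at each stage the compatibility constraints imposed by the already-constructed portion of the sequence, together with the pair-indiscernibility of the $(d_i, d'_i)$, define a partial type of size $\leq \lambda_T$ over a subset of $M$ of size $\leq \lambda_T$, which is finitely satisfiable; $\lambda_T$-saturation of $M$ then provides a realisation. Iterating yields the desired length-$n$ initial segment, which completes the verification that $\Sigma$ is consistent and hence that $a$ and $a'$ lie on a common $M$-indiscernible sequence.
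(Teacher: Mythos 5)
Your reduction in the very first step is where the argument breaks: you aim to show $\d_M(a,a')\leq 1$, i.e.\ that $a$ and $a'$ lie on one common $M$-indiscernible sequence, but this is strictly stronger than the fact and is false in general, even for stable full first-order theories (Morleyised, hence thick) and arbitrarily saturated $M$. Take $T = \Th(\Z, s)$, viewed as a positive theory, let $M$ be any $\lambda_T$-saturated model, let $a$ realise the type over $M$ of an element of a fresh $\Z$-chain, and put $a' = s(a)$. Then $a \equiv_M a'$, but no $M$-indiscernible sequence $(a_i)_{i<\omega}$ can contain both: if they sit at positions $i<j$, indiscernibility applied to the positive formula $s(x_0)=x_1$ (or $x_0 = s(x_1)$ in the other order) forces $a_j = a_k$ for all $j,k > i$, and then the pair $(a_j,a_k)$ gives $s(a_j)=a_j$, contradicting $T$. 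Here $a \equivls_M a'$ holds with Lascar distance exactly $2$, witnessed by an element $c$ of a completely new chain with $\d_M(a,c)\leq 1$ and $\d_M(a',c)\leq 1$. So the partial type $\Sigma = \Theta_M(a,a',x_2,\dots)$ you want to realise is simply inconsistent in this situation, and no refinement of your steps 2--3 can fix that; any correct proof must allow a chain of length at least two, i.e.\ construct an auxiliary common neighbour $c$ rather than a single sequence through $a$ and $a'$.

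A second, related problem is the role of $\lambda_T$-saturation. The types you propose to realise in your third step have parameters among $a, a', d_i, d_i'$, which do not lie in $M$, and their realisations are not required to lie in $M$ either; $\lambda_T$-saturation of $M$ only provides realisations \emph{inside $M$} of types over subsets of $M$ of size $<\lambda_T$. What you actually invoke there is just saturation of the monster, so the hypothesis that carries the content of the statement never genuinely enters your argument. Note also that the paper does not prove this statement itself but cites it (Dobrowolski--Kamsma, Lemma 2.20); the argument there produces the intermediate element using the saturation of $M$ in an essential way, which is the ingredient your outline is missing.
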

\begin{fact}[{\cite[Theorem 9.1]{dobrowolski_kim-independence_2022}}]
\thlabel{fact:kim-pillay-nsop1}
Let $T$ be a thick theory. Then $T$ is \NSOP[1] if and only if there exists an automorphism invariant ternary relation $\ind$ on subsets, only allowing e.c.\ models in the base, satisfying the following properties:
\begin{description}
\item[\textsc{Finite Character}] if $a \ind_M b_0$ for all finite $b_0 \subseteq b$ then $a \ind_M b$.
\item[\textsc{Existence}] $a \ind_M M$ for any e.c.\ model $M$.
\item[\textsc{Monotonicity}] $a a' \ind_M b b'$ implies $a \ind_M b$.
\item[\textsc{Symmetry}] $a \ind_M b$ implies $b \ind_M a$.
\item[\textsc{Chain Local Character}] let $a$ be a finite tuple and $\kappa > |T|$ be regular then for every continuous chain $(M_i)_{i < \kappa}$, with $|M_i| < \kappa$ for all $i$, there is $i_0 < \kappa$ such that $a \ind_{M_{i_0}} M$, where $M = \bigcup_{i < \kappa} M_i$.
\item[\textsc{Independence Theorem}] if $a \ind_M b$, $a' \ind_M c$ and $b \ind_M c$ with $a \equivls_M a'$ then there is $a''$ such that $a''b \equivls_M ab$, $a''c \equivls_M a'c$ and $a'' \ind_M bc$.
\item[\textsc{Extension}] if $a \ind_M b$ then for any $c$ there is $a' \equiv_{Mb} a$ such that $a' \ind_M bc$.
\item[\textsc{Transitivity}] if $a \ind_M N$ and $a \ind_N b$ with $M \subseteq N$ then $a \ind_M Nb$.
\end{description}
Furthermore, in this case $\ind = \ind^K$ is given by non-Kim-dividing.
\end{fact}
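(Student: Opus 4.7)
My overall plan is to prove the biconditional in two stages together with the uniqueness clause. I would first handle the easier direction --- that existence of an $\ind$ satisfying the axioms forces $T$ to be \NSOP[1] --- by deriving a contradiction from any putative \SOP[1] witness; then construct a concrete candidate (non-Kim-dividing) for the converse and verify each axiom; and finally derive canonicity by showing any abstract $\ind$ satisfying the axioms must coincide with $\ind^K$. The main obstacle throughout is the \textsc{Independence Theorem}, both when invoking it to contradict \SOP[1] and when verifying it for Kim-independence.

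For the first direction, I would argue by contrapositive. Assume $\phi(x,y)$ witnesses \SOP[1] with tree $(a_\eta)_{\eta \in 2^{<\omega}}$ and negation $\psi(y_1,y_2)$ of $\exists x(\phi(x,y_1) \wedge \phi(x,y_2))$. Fix a $\lambda_T$-saturated e.c.\ model $M$ over which the tree lives, replacing the tree by a sufficiently indiscernible variant using \thref{erdos-rado-indiscernible-sequences}. Using \textsc{Chain Local Character} along a continuous chain capturing initial segments of the tree, together with \textsc{Extension} and \thref{fact:thick-type-over-lambda-t-saturated-model-is-lascar-strong}, I would extract two realisations $a \models \phi(x, b)$ and $a' \models \phi(x, c)$ along incomparable consistent branches such that $a \ind_M b$, $a' \ind_M c$, $b \ind_M c$, and both $a \equivls_M a'$ and $b \equivls_M c$. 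The \textsc{Independence Theorem} would then yield $a''$ with $\models \phi(a'', b) \wedge \phi(a'', c)$, contradicting $\models \psi(b, c)$ produced by the \SOP[1] configuration.

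For the converse, I would define Kim-dividing over an e.c.\ model $M$ by strengthening \thref{def:dividing}: replace ``$C$-indiscernible'' by ``$M$-Lascar-invariant Morley sequence'', whose abundance in thick theories is guaranteed by \thref{fact:thick-type-over-lambda-t-saturated-model-is-lascar-strong}. Then set $a \ind^K_M b$ iff no formula in $\tp(a/Mb)$ Kim-divides over $M$. The structural axioms (\textsc{Finite Character}, \textsc{Monotonicity}, \textsc{Symmetry}, \textsc{Existence}) are routine; \textsc{Extension} and \textsc{Transitivity} follow by standard saturation arguments applied to Morley sequences; \textsc{Chain Local Character} is obtained by the classical counting-types-along-a-chain argument. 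The hardest step --- and the main obstacle of the whole proof --- is the \textsc{Independence Theorem}, which is the signature feature of \NSOP[1] theories; its adaptation to the positive setting requires an inductive zig-zag amalgamation as carried out in \cite{dobrowolski_kim-independence_2022}, in which thickness is used essentially to pass between $\equiv_M$ and $\equivls_M$, and \NSOP[1] is used precisely to show that the attempted amalgamations cannot collapse. Canonicity would then follow by showing both inclusions: $\ind \supseteq \ind^K$ is essentially the contrapositive of the first direction applied to explicit witnesses of Kim-dividing, while the reverse inclusion uses \textsc{Extension} and \textsc{Symmetry} together with the \textsc{Independence Theorem} to rule out Kim-dividing from any instance of $a \ind_M b$.
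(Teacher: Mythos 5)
This statement is a \emph{Fact} in the paper: it is quoted verbatim from \cite[Theorem 9.1]{dobrowolski_kim-independence_2022} and the paper gives no proof of it at all, so there is nothing internal to compare your argument against; the only honest benchmark is the cited work itself, and measured against that your proposal is an outline with genuine gaps rather than a proof. Most seriously, at the central point you defer to the very reference being proved (``as carried out in \cite{dobrowolski_kim-independence_2022}''), which makes the converse direction circular as written. Moreover, several steps you classify as routine are in fact major theorems: \textsc{Symmetry} of Kim-independence is not a ``structural axiom'' that follows formally from the definition --- in the first-order setting it is the heart of Kaplan--Ramsey's work and requires Kim's lemma for Kim-dividing plus a tree/array argument, and the positive-logic version in \cite{dobrowolski_kim-independence_2022} needs substantial machinery (thickness, Lascar-invariant global types, dual trees); similarly \textsc{Chain Local Character} for $\ind^K$ is not obtained by ``the classical counting-types-along-a-chain argument'' (that argument gives local character for \emph{dividing} in simple theories), but is the analogue of the Kaplan--Ramsey--Shelah result and again needs the \NSOP[1] hypothesis in an essential way.

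In the other direction (axioms imply \NSOP[1]), your plan names the right target configuration --- $a \ind_M b$, $a' \ind_M c$, $b \ind_M c$, $a \equivls_M a'$ with $\models \phi(a,b)$, $\models \phi(a',c)$ and $\models \psi(b,c)$, contradicted by the \textsc{Independence Theorem} --- but the extraction of this configuration from an \SOP[1] tree is precisely the crux of the Chernikov--Ramsey-style argument, and ``using \textsc{Chain Local Character} along a continuous chain capturing initial segments of the tree, together with \textsc{Extension}'' does not explain how one arranges $b \ind_M c$ for the relevant pair of tree nodes nor why the two branch realisations can be taken Lascar-equivalent over a single suitable $M$; this needs the careful path/antichain analysis (stretching the tree to height a large cardinal, finding a node over which the branch type does not ``fork'' in the sense of the abstract relation, and then using indiscernibility over that base). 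As it stands, then, the proposal correctly identifies the shape of the Kim--Pillay-style criterion but does not constitute an independent proof of \thref{fact:kim-pillay-nsop1}; for the purposes of this paper no proof is expected, since the result is imported as a black box.
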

\begin{fact}[{\cite[Theorem 1.51]{ben-yaacov_simplicity_2003} and \cite[Theorem 1.15]{ben-yaacov_thickness_2003}}]
\thlabel{fact:kim-pillay-simple}
Let $T$ be a thick theory. Then $T$ is simple if and only if there exists an automorphism invariant ternary relation $\ind$ on subsets, only allowing e.c.\ models in the base, satisfying all the properties from \thref{fact:kim-pillay-nsop1} as well as:
\begin{description}
\item[\textsc{Base-Monotonicity}] if $a \ind_M B$ and $M \subseteq N \subseteq B$, with $N$ an e.c.\ model, then $a \ind_N B$.
\end{description}
Furthermore, in this case $\ind = \ind^d$ is given by non-dividing.

In this fact we may restrict the base of $\ind$ further to $\kappa$-saturated e.c.\ models for some fixed $\kappa$.
\end{fact}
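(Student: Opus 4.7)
The plan is to adapt the Kim--Pillay characterisation of simple theories to positive logic, essentially following \cite{ben-yaacov_simplicity_2003, ben-yaacov_thickness_2003}. I would prove both implications; then $\ind = \ind^d$ emerges as a canonicity statement from the axioms, and the $\kappa$-saturated reduction is a formal afterword.

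\textbf{Forward direction.} Assuming $T$ is simple, I would verify that $\ind^d$, restricted to e.c.-model bases, satisfies each listed axiom. Finite Character and Monotonicity are immediate from \thref{psi-dividing}; Existence over an e.c.\ $M$ is clear; Base-Monotonicity is a short unfolding of the definition of dividing along $M$-indiscernible sequences; Chain Local Character is exactly \thref{def:simple-theory} applied along a chain. For Symmetry I would run the positive variant of Kim's Lemma: if $\phi(x,b)$ $\psi$-divides over $M$, build a Morley-style array with $M$-indiscernible rows via \thref{erdos-rado-indiscernible-sequences} and read off $\psi$-dividing of $\tp(b/Ma)$ from the columns, using \thref{fact:thick-type-over-lambda-t-saturated-model-is-lascar-strong} to realise coherent Lascar-equivalent copies along the way. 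The Independence Theorem is the classical amalgamation of non-dividing extensions, which goes through over a $\lambda_T$-saturated e.c.\ base because thickness forces $\equiv_M$ and $\equivls_M$ to coincide there; Extension and Transitivity then follow in the standard way.

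\textbf{Backward direction.} Suppose an $\ind$ with the listed properties exists. The main step is to prove $\ind = \ind^d$ over e.c.\ bases; then Chain Local Character for $\ind$ transfers to $\ind^d$, giving local character of dividing and hence simplicity. For the inclusion $\ind \Rightarrow \ind^d$: given $a \ind_M b$ and an $M$-indiscernible sequence $(b_i)_{i<\omega}$ with $b_0 = b$, one inductively realises $\tp(a/Mb_i)$ amalgamated over $M$ using Extension and the Independence Theorem, passing to a $\lambda_T$-saturated $M$ so Lascar strong types freely combine; by compactness $\tp(a/Mb)$ does not divide. For the reverse inclusion: given $a \ind^d_M b$, apply Chain Local Character of $\ind$ along a continuous chain exhausting $Mb$ to obtain $a \ind_{M_0} M$ for some small $M_0 \subseteq M$, then upgrade to $a \ind_M b$ using Base-Monotonicity together with an automorphism argument that exploits non-dividing of $\tp(a/Mb)$ to move the witness into place.

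\textbf{The $\kappa$-saturated refinement and the main obstacle.} For the final clause, given $\ind$ defined only on $\kappa$-saturated e.c.\ bases, I would extend it to an arbitrary e.c.\ $M$ by choosing a $\kappa$-saturated $N \supseteq M$ and declaring $a \ind_M b$ to hold iff representatives $a', b' \equiv_M ab$ inside $N$ satisfy $a' \ind_N b'$; well-definedness and transfer of axioms reduce to the Independence Theorem on $\kappa$-saturated bases. The principal obstacle throughout is the Independence Theorem for $\ind^d$ in the forward direction: the free amalgamation of Lascar strong types that drives the classical proof behaves cleanly only over sufficiently saturated e.c.\ models, so the construction has to be assembled carefully to stay inside the monster while keeping thickness and \thref{fact:thick-type-over-lambda-t-saturated-model-is-lascar-strong} active at every step.
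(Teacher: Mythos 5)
There is nothing in the paper to compare your proposal against: \thref{fact:kim-pillay-simple} is stated as a \emph{Fact} and is not proved here at all; it is imported wholesale from \cite[Theorem 1.51]{ben-yaacov_simplicity_2003} together with \cite[Theorem 1.15]{ben-yaacov_thickness_2003}. So what you have written is a reconstruction of the external proof, and indeed your overall plan --- adapt the Kim--Pillay characterisation, verify the axioms for $\ind^d$ in a simple thick theory, and prove canonicity $\ind = \ind^d$ in the converse direction --- is exactly the route taken in those references.

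Two places in your sketch treat as routine what is in fact the crux. First, in the forward direction you say that \textsc{Extension} for non-dividing ``follows in the standard way''; it does not. In positive logic without thickness, non-dividing can fail \textsc{Extension} (see \thref{rem:local-character-dividing-vs-forking}: there is a stable positive theory with a type over $\emptyset$ that forks over $\emptyset$), and \textsc{Extension} for $\ind^d$ in thick theories is precisely the content of the second citation, \cite[Theorem 1.15]{ben-yaacov_thickness_2003} --- it is a genuine theorem using thickness, not a formal consequence of the other axioms. Second, in the backward direction your inclusion $\ind^d \Rightarrow \ind$ is not an argument as stated: \textsc{Chain Local Character} applies to a chain of e.c.\ models, not to a chain ``exhausting $Mb$'', and the proposed upgrade via \textsc{Base-Monotonicity} plus ``an automorphism argument'' does not obviously produce $a \ind_M b$; the standard canonicity proof instead stretches $b$ into a long $M$-indiscernible ($\ind$-Morley) sequence, uses non-dividing of $\tp(a/Mb)$ to realise all copies simultaneously, and then applies local character/pigeonhole and invariance to land on one element of the sequence. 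Relatedly, in the other inclusion you ``pass to a $\lambda_T$-saturated $M$'', but the base is given and cannot be changed; the axioms in \thref{fact:kim-pillay-nsop1} are formulated with $\equivls_M$ exactly so that the amalgamation works over an arbitrary e.c.\ base (elements of an $M$-indiscernible sequence are automatically Lascar equivalent over $M$), and $\lambda_T$-saturation is only needed where one wants $\equiv_M$ to imply $\equivls_M$, as in \thref{fact:thick-type-over-lambda-t-saturated-model-is-lascar-strong}.
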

\begin{fact}[{\cite[Theorem 2.8]{ben-yaacov_simplicity_2003}}]
\thlabel{fact:stable-iff-stationary-types}
Let $T$ be a thick theory. Then $T$ is stable if and only if it is simple and dividing independence satisfies \textsc{Stationarity} over $\lambda_T$-saturated e.c.\ models: whenever $M$ is a $\lambda_T$-saturated e.c.\ model, $a \ind^d_M  b$, $a' \ind^d_M b$ and $a \equiv_M a'$ then $a \equiv_{Mb} a'$.
\end{fact}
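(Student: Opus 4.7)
The plan is to handle both directions using the combinatorial characterisation of stability from \thref{thm:stable-formula} together with the axiomatic properties of $\ind^d$ collected in this section.

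For the forward direction, assume $T$ is stable. Then every formula is \NOP by \thref{thm:stable-formula}, and traversing \thref{implication-diagram} yields \NTP, hence simplicity by \thref{dividing-local-character-iff-ntp}. For \textsc{Stationarity}, fix a $\lambda_T$-saturated e.c.\ model $M$ with $a \ind^d_M b$, $a' \ind^d_M b$, and $a \equiv_M a'$. By \thref{thm:stable-formula}(vi), for each formula $\phi(x,y)$ the type $\tp(a/Mb)$ admits a $\phi$-definition. The standard classical argument—that over a sufficiently saturated base a non-dividing extension admits a $\phi$-definition over the base itself—can be adapted using the inconsistency witnesses $\psi$ from \thref{psi-dividing} in place of Boolean negations. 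Concretely, if $\tp_\phi(a/Mb)$ admitted no $\phi$-definition over $M$, one could use $\lambda_T$-saturation of $M$ to replicate $\phi(x,b)$ along an $M$-indiscernible sequence and produce an inconsistency witness, contradicting $a \ind^d_M b$. The same holds for $a'$, so both $\phi$-types are computed from the common $\phi$-definition induced by $\tp(a/M) = \tp(a'/M)$. As $\phi$ was arbitrary, $a \equiv_{Mb} a'$.

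For the converse, assume $T$ is simple and \textsc{Stationarity} over $\lambda_T$-saturated e.c.\ models holds for $\ind^d$. I would prove \NOP formula-by-formula; stability then follows from \thref{stable-theory}. Suppose $\phi(x,y)$ has \OP; by \thref{prop:order-property-indiscernible-sequence} this is witnessed by an indiscernible sequence $(a_i b_i)_{i<\omega}$ with $\models \phi(a_i, b_j)$ iff $i<j$. Using compactness elongate to a long sequence and, via local character and extension in \thref{fact:kim-pillay-simple}, produce a $\lambda_T$-saturated e.c.\ model $M$ together with an $M$-indiscernible Morley sequence $(a_i' b_i')_{i<\omega}$ realising the same Ehrenfeucht–Mostowski type. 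By \thref{fact:thick-type-over-lambda-t-saturated-model-is-lascar-strong}, equality of types over $M$ upgrades to Lascar equality, and iterated application of \textsc{Stationarity} to the base $M \cup \{a_j' b_j' : j \neq i\}$ forces any two elements of the sequence to realise the same type over the rest, so the sequence is totally indiscernible. This contradicts $\models \phi(a_0', b_1')$ versus $\not\models \phi(a_1', b_0')$.

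The principal obstacle is the forward direction: transferring the classical definability/non-dividing correspondence to positive logic. In full first-order logic one freely uses Boolean negation and the heir/coheir description of non-forking; here one must track the inconsistency witnesses $\psi$ provided by \thref{psi-dividing} at every step, and verify that $\lambda_T$-saturation of $M$ is enough to realise all relevant $\psi$-patterns. The converse should reduce to a routine diagram chase with the axioms of \thref{fact:kim-pillay-simple}, once Lascar strong types are invoked via \thref{fact:thick-type-over-lambda-t-saturated-model-is-lascar-strong} whenever equality of types over $M$ is to be promoted to Lascar equality.
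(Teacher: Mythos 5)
First, note that the paper does not prove this statement at all: it is imported as a Fact, cited to \cite[Theorem 2.8]{ben-yaacov_simplicity_2003}, and the only thing the authors add is the remark that Ben-Yaacov's proof, written for $|T|^+$-saturated e.c.\ models, goes through verbatim for $\lambda_T$-saturated ones. So you are attempting to reprove a nontrivial external result, and as it stands your proposal has real gaps rather than being a complete alternative proof.

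In the forward direction, the chain stable $\Rightarrow$ \NOP $\Rightarrow$ \NTP $\Rightarrow$ simple via \thref{dividing-local-character-iff-ntp} is fine, but the \textsc{Stationarity} part is exactly the hard core and you have not supplied it. Knowing from \thref{thm:stable-formula}(vi) that $\tp(a/Mb)$ has a $\phi$-definition (over $Mb$) is far from the statement you need, namely that a non-dividing extension of a type over a $\lambda_T$-saturated e.c.\ model is \emph{definable over $M$ itself} and, crucially, that this definition is \emph{uniquely} determined by $\tp(a/M)$, so that the two extensions agree. Your sentence ``both $\phi$-types are computed from the common $\phi$-definition induced by $\tp(a/M)=\tp(a'/M)$'' simply asserts this uniqueness; in the classical setting it is where heirs/coheirs or finite equivalence relations enter, and in positive logic it is precisely the content of Ben-Yaacov's theorem (note also that a $\phi$-definition in the sense of \thref{def:definable-phi-type} is a partial type of size $\leq|T|$, not a formula, which changes the bookkeeping). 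You acknowledge this as ``the principal obstacle'', which is an honest flag, but it means the direction is not proved. In the converse direction there is a concrete misstep: \textsc{Stationarity} is hypothesised only over $\lambda_T$-saturated e.c.\ models, yet you apply it with base $M\cup\{a'_jb'_j: j\neq i\}$, which is neither an e.c.\ model nor $\lambda_T$-saturated, so the total-indiscernibility argument does not go through as written. The standard repair is different: use uniqueness of non-dividing extensions over $\lambda_T$-saturated models together with local character and \textsc{Base-Monotonicity} from \thref{fact:kim-pillay-simple} to count types over a suitable model (every type over a large model is the unique non-dividing extension of its restriction to a small $\lambda_T$-saturated submodel), yielding $\lambda$-stability for appropriate $\lambda$; alternatively, one must first embed the side parameters into a $\lambda_T$-saturated model chosen independent over $M$, which is extra work your sketch omits. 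Invoking \thref{fact:thick-type-over-lambda-t-saturated-model-is-lascar-strong} is the right instinct for where thickness and $\lambda_T$-saturation are needed, but by itself it does not bridge either gap.
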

In \thref{fact:stable-iff-stationary-types} we use $\lambda_T$-saturated e.c.\ models, because we want types over these e.c.\ models to be Lascar strong types (see \thref{fact:thick-type-over-lambda-t-saturated-model-is-lascar-strong}). The proof of \cite[Theorem 2.8]{ben-yaacov_simplicity_2003} works with $|T|^+$-saturated e.c.\ models, but goes through for $\lambda_T$-saturated e.c.\ models as well (noting that $\lambda_T > |T|^+$).
\begin{proposition}
\thlabel{prop:stable-iff-stationarity}
A thick \NSOP[1] theory $T$ is stable iff Kim-independence satisfies \textsc{Stationarity} over $\lambda_T$-saturated e.c.\ models.
\end{proposition}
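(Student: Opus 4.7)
The plan is to combine \thref{fact:kim-pillay-nsop1}, \thref{fact:kim-pillay-simple} and \thref{fact:stable-iff-stationary-types}. For the forward direction, if $T$ is stable then by \thref{fact:stable-iff-stationary-types} it is simple, so $\ind^d$ satisfies all the axioms of \thref{fact:kim-pillay-simple}, and in particular those of \thref{fact:kim-pillay-nsop1}. The uniqueness clause in \thref{fact:kim-pillay-nsop1} then forces $\ind^d = \ind^K$, so the \textsc{Stationarity} of $\ind^d$ supplied by \thref{fact:stable-iff-stationary-types} transfers to $\ind^K$.

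For the reverse direction, the plan is to show that \textsc{Stationarity} forces $\ind^K$ to satisfy \textsc{Base-Monotonicity} over $\lambda_T$-saturated e.c.\ model bases. Using the remark in \thref{fact:kim-pillay-simple} permitting the base to be restricted to $\kappa$-saturated e.c.\ models for fixed $\kappa$, I would then apply it with $\kappa = \lambda_T$ to conclude that $T$ is simple and $\ind^K = \ind^d$. \textsc{Stationarity} of $\ind^K$ becomes \textsc{Stationarity} of $\ind^d$, and \thref{fact:stable-iff-stationary-types} delivers stability.

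The hard part will be the base-monotonicity derivation. Suppose $a \ind^K_M B$ with $M \subseteq N \subseteq B$ and both $M, N$ being $\lambda_T$-saturated e.c.\ models. First I would invoke \textsc{Existence} and \textsc{Extension} over $N$ to produce $a' \equiv_N a$ with $a' \ind^K_N B$. The crucial observation is that any $\sigma \in \Aut(\MM/N)$ with $\sigma(a) = a'$ also fixes $M$ pointwise; applying automorphism invariance to $a \ind^K_M N$ (a \textsc{Monotonicity} consequence of $a \ind^K_M B$) therefore yields $a' \ind^K_M N$, and \textsc{Transitivity} (using $M \subseteq N$) upgrades this to $a' \ind^K_M B$. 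At this point $a \equiv_M a'$ and both tuples are $\ind^K_M$-independent of $B$, so \textsc{Stationarity} over the $\lambda_T$-saturated model $M$ forces $a \equiv_B a'$; pulling $a' \ind^K_N B$ back along an automorphism fixing $B \supseteq N$ then gives $a \ind^K_N B$, as required.
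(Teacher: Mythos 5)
Your proposal is correct and follows essentially the same route as the paper: the forward direction via canonicity of the independence relation ($\ind^d = \ind^K$ for stable, hence simple, $T$) plus \thref{fact:stable-iff-stationary-types}, and the reverse direction by deriving \textsc{Base-Monotonicity} for $\ind^K$ over $\lambda_T$-saturated bases from \textsc{Existence}, \textsc{Extension}, \textsc{Monotonicity}, \textsc{Transitivity} and \textsc{Stationarity}, then invoking \thref{fact:kim-pillay-simple} with the base restricted to $\lambda_T$-saturated e.c.\ models. The individual steps match the paper's proof almost verbatim, so there is nothing further to add.
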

\begin{proof}
If $T$ is stable then Kim-dividing is the same as dividing by the canonicity parts of \thref{fact:kim-pillay-nsop1} and \thref{fact:kim-pillay-simple}, so by \thref{fact:stable-iff-stationary-types} we have \textsc{Stationarity} over $\lambda_T$-saturated e.c.\ models for Kim-dividing.

By the other direction of \thref{fact:stable-iff-stationary-types} it suffices to prove that \textsc{Stationarity} for $\ind^K$ implies that $T$ is simple. By \thref{fact:kim-pillay-simple} it is then enough to prove that $\ind^K$ satisfies \textsc{Base-Monotonicity}, where we may in fact restrict ourselves in the base to $\lambda_T$-saturated e.c.\ models. So let $M$ be a $\lambda_T$-saturated e.c.\ model such that $a \ind_M^K B$, and let $N$ be a ($\lambda_T$-saturated) e.c.\ model such that $M \subseteq N \subseteq B$. By \textsc{Existence} we have $a \ind_N^K N$, so by \textsc{Extension} we find $a' \equiv_N a$ with $a' \ind_N^K B$. By \textsc{Monotonicity} applied to $a \ind_M^K B$ we find $a \ind_M^K N$ and so $a' \ind_M^K N$. We then apply \textsc{Transitivity} to find $a' \ind_M^K B$. As $a \equiv_M a'$ we can apply \textsc{Stationarity} to find $a \equiv_B a'$ and so we conclude $a \ind_N^K B$, as required.
\end{proof}
\begin{definition}
\thlabel{def:kim-morley-sequence}
Let $M$ be some e.c.\ model. An \emph{$\ind^K_M$-Morley sequence} is an $M$-indiscernible sequence $(a_i)_{i < \omega}$ such that $a_i \ind^K_M (a_j)_{j < i}$ for all $i < \omega$.
\end{definition}
\begin{lemma}
\thlabel{nsop1-construct-kim-morley-sequence}
Let $T$ be a thick \NSOP[1] theory, and let $a$ be any tuple and $M$ be any e.c.\ model. Then there is an $\ind^K_M$-Morley sequence $(a_i)_{i < \omega}$ with $a_0 = a$.
\end{lemma}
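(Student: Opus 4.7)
The plan is a standard ``build long, extract short'' argument adapted to Kim-independence. First I would construct, by transfinite induction, a long sequence $(a'_\alpha)_{\alpha<\lambda}$ of tuples with $a'_0=a$, each $a'_\alpha \equiv_M a$, and satisfying $a'_\alpha \ind^K_M (a'_\beta)_{\beta<\alpha}$. Here $\lambda$ is chosen large enough to feed into \thref{erdos-rado-indiscernible-sequences} (e.g.\ $\lambda = \beth_{(2^{|T|+|M|+|a|})^+}$). The successor step uses \textsc{Existence} to get $a \ind^K_M M$ and then \textsc{Extension} against the parameter $(a'_\beta)_{\beta<\alpha}$ to produce $a''\equiv_M a$ with $a''\ind^K_M M(a'_\beta)_{\beta<\alpha}$; setting $a'_\alpha := a''$ and applying \textsc{Monotonicity} gives the required independence.

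Next I would apply \thref{erdos-rado-indiscernible-sequences} with $C=M$ to this long sequence to obtain an $M$-indiscernible sequence $(b_i)_{i<\omega}$ such that, for every $n<\omega$, some $i_1<\dots<i_n<\lambda$ give $b_1\dots b_n \equiv_M a'_{i_1}\dots a'_{i_n}$. In particular $b_1 \equiv_M a'_{i_1} \equiv_M a$, so by very homogeneity of $\MM$ there is $\sigma\in\Aut(\MM/M)$ with $\sigma(b_1)=a$. Setting $a_i := \sigma(b_{i+1})$ yields an $M$-indiscernible sequence with $a_0 = a$.

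Finally I would check the Kim-Morley condition $a_i \ind^K_M (a_j)_{j<i}$. By \textsc{Finite Character} it suffices to verify $a_i \ind^K_M a_{j_0}\dots a_{j_{m-1}}$ for every finite $j_0<\dots<j_{m-1}<i$. By $M$-indiscernibility of $(a_i)_{i<\omega}$ the tuple $(a_{j_0},\dots,a_{j_{m-1}},a_i)$ has the same type over $M$ as $(a_0,\dots,a_{m-1},a_m)$; undoing $\sigma$ and invoking the Erdős--Rado property, this type is also realized by some $(a'_{\alpha_0},\dots,a'_{\alpha_m})$ with $\alpha_0<\dots<\alpha_m<\lambda$. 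By construction and \textsc{Monotonicity}, $a'_{\alpha_m}\ind^K_M a'_{\alpha_0}\dots a'_{\alpha_{m-1}}$, and then automorphism invariance of $\ind^K$ transports this to $a_i\ind^K_M a_{j_0}\dots a_{j_{m-1}}$.

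There is no genuine obstacle here, only bookkeeping: the one point that needs care is ensuring $a_0=a$, which is why I build in the transfer by $\sigma$ rather than simply quoting the extracted indiscernible sequence. The argument uses only \textsc{Existence}, \textsc{Extension}, \textsc{Monotonicity}, \textsc{Finite Character} and automorphism invariance from \thref{fact:kim-pillay-nsop1}, together with thickness implicit in the use of indiscernible sequences; \textsc{Symmetry}, \textsc{Independence Theorem}, \textsc{Chain Local Character} and \textsc{Transitivity} are not needed.
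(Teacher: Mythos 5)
Your proposal is correct and follows essentially the same route as the paper's proof: build a long $\ind^K_M$-independent sequence of $M$-conjugates of $a$ by \textsc{Existence} and repeated \textsc{Extension}, extract an $M$-indiscernible sequence via \thref{erdos-rado-indiscernible-sequences}, move it by an automorphism so that the first term is $a$, and verify the Morley condition using \textsc{Finite Character} together with the fact that finite subtuples realise types of subtuples of the long sequence. Your explicit handling of the automorphism and the finite-tuple check is just a more detailed write-up of the same argument.
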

\begin{proof}
Standard, but we give the proof for completeness. By \textsc{Existence} we have $a \ind^K_M M$. So by repeatedly applying \textsc{Extension} we find, for some big enough $\lambda$, a sequence $(a'_i)_{i < \lambda}$ with $a'_i \equiv_M a$ and $a'_i \ind^K_M (a'_j)_{j < i}$ for all $i < \lambda$. Then using \thref{erdos-rado-indiscernible-sequences} we base an $M$-indiscernible sequence $(a_i)_{i < \omega}$ on $(a'_i)_{i < \lambda}$, and by an automorphism we may assume $a_0 = a$. By \textsc{Finite Character} it is then enough to verify that for any $i_1 < \ldots < i_n < \omega$ we have $a_{i_n} \ind^K_M a_{i_1} \ldots a_{i_{n-1}}$, which follows because there are $j_1 < \ldots < j_n < \lambda$ such that $a_{i_1} \ldots a_{i_n} \equiv_M a'_{j_1} \ldots a'_{j_n}$.
\end{proof}
\begin{repeated-theorem}[\thref{thm:op-iff-ip-or-sop1}]
A thick theory $T$ has \OP iff it has \IP or \SOP[1]. Equivalently: $T$ is stable iff it is \NIP and \NSOP[1].
\end{repeated-theorem}
\begin{proof}
From \thref{implication-diagram} we already know that \IP and \SOP[1] imply \OP, i.e.\ unstability. For the other direction we will prove that any thick unstable \NSOP[1] theory has \IP.

As $T$ is unstable we have by \thref{prop:stable-iff-stationarity} that there is a $\lambda_T$-saturated e.c.\ model $M$ such that \textsc{Stationarity} over $M$ fails. That is, there are $a_0, a_1, b$ such that $a_0 \ind_M^K b$, $a_1 \ind_M^K b$ and $a_0 \equiv_M a_1$ while $a_0 \not \equiv_{Mb} a_1$. Write $p_0(x, y) = \tp(a_0 b / M)$ and $p_1(x, y) = \tp(a_1 b / M)$. Use \thref{nsop1-construct-kim-morley-sequence} to find an $\ind^K_M$-Morley sequence $(b_i)_{i < \omega}$ with $b_0 = b$. We will now construct $(a_\eta)_{\eta \in 2^{< \omega}}$ by induction on the length (domain) of $\eta$, such that for $\eta \in 2^n$:
\begin{enumerate}
\item $\models p_{\eta(i)}(a_\eta, b_i)$ for all $i < n$,
\item $a_\eta \ind_M^K b_{<n}$,
\item $a_\eta \equivls_M a_0 \equivls_M a_1$.
\end{enumerate}
For $a_{\langle 0 \rangle}$ and $a_{\langle 1 \rangle}$ we can just take $a_0$ and $a_1$, respectively, where (3) is satisfied by \thref{fact:thick-type-over-lambda-t-saturated-model-is-lascar-strong} and the fact that $M$ is $\lambda_T$-saturated. Now assume that $(a_\eta)_{\eta \in 2^{\leq n}}$ has been constructed and let $\eta \in 2^{n+1}$. As $b_n \equivls_M b_0 = b$ we can find $a'$ such that $a' b_n \equivls_M a_{\eta(n)} b$. We also have $a' \ind_M^K b_n$, $a_{\eta|_n} \ind_M^K b_{<n}$ and $b_n \ind_M^K b_{<n}$, so by \textsc{Independence Theorem} we find the required $a_\eta$, where (1)--(3) are easily verified using the induction hypothesis and the application of \textsc{Independence Theorem}.

By (1) and compactness we now find $(a_\sigma)_{\sigma \in 2^\omega}$ such that $a_\sigma \models \bigcup_{i < \omega} p_{\sigma(i)}(x, b_i)$ for every $\sigma \in 2^\omega$. As $p_0(x, y)$ and $p_1(x, y)$ are distinct, there is $\phi(x, y) \in p_1(x, y)$ such that $\phi(x, y) \not \in p_0(x, y)$. So there is a negation $\psi(x, y)$ of $\phi(x, y)$ with $\psi(x, y) \in p_0(x, y)$. Now for any $\sigma \in 2^\omega$ and $i < \omega$ we have:
\[
\begin{array}{rcccl}
\sigma(i) = 1 & \implies & \models p_1(a_\sigma, b_i) & \implies & \models \phi(a_\sigma, b_i), \\
\sigma(i) = 0 & \implies & \models p_0(a_\sigma, b_i) & \implies & \models \psi(a_\sigma, b_i).
\end{array}
\]
Any parameters from $M$ contained in $\phi(x, y)$ or $\psi(x, y)$ can be assumed to be part of the $b_i$'s, so we see that the formula $\theta(y, x) := \phi(x, y)$ has \IP.
\end{proof}
\begin{repeated-theorem}[\thref{thm:tp-iff-sop1-or-tp2}]
A thick theory $T$ has \TP iff it has \SOP[1] or \TP[2]. Equivalently: $T$ is simple iff it is \NSOP[1] and \NTP[2].
\end{repeated-theorem}
\begin{proof}
From \thref{implication-diagram} we know that if $T$ has \SOP[1] or \TP[2] then it has \TP. We will prove the converse by proving that a thick non-simple \NSOP[1] theory has \TP[2]. This uses that simplicity is equivalent to \NTP, which is exactly \thref{dividing-local-character-iff-ntp}.

Assume then that $T$ is thick, non-simple, and \NSOP[1]. By \thref{fact:kim-pillay-nsop1} and \thref{fact:kim-pillay-simple} we have that an \NSOP[1] theory is simple iff $\ind^K$ satisfies \textsc{Base-Monotonicity}. We have that $\ind^d$ always satisfies \textsc{Base-Monotonicity} by definition: if $\tp(a/B)$ does not divide over $M$ and $M \subseteq N$ are e.c.\ models contained in $B$ then $\tp(a/B)$ does not divide over $N$. Hence we must have $\ind^d \neq \ind^K$. It follows easily from the definition of Kim-dividing (see for example \cite[Remark 4.12]{dobrowolski_kim-independence_2022}) that $\ind^d \implies \ind^K$, so there must be $a, b, M$ with $a \ind^K_M b$ while $a \nind^d_M b$. Write $p(x, b) = \tp(a/Mb)$ and let $J = (b_j)_{j < \omega}$ be an $M$-indiscernible sequence with $b_0 = b$ such that $\bigcup_{j < \omega} p(x, b_j)$ is inconsistent. So there is $\phi(x, y) \in p(x, y)$ together with a negation $\psi(y_1, \ldots, y_k)$ of $\exists x(\phi(x, y_1) \wedge \ldots \wedge \phi(x,y_k))$ such that for all $j_1 < \ldots < j_k < \omega$ we have $\models \psi(b_{j_1}, \ldots, b_{j_k})$. We claim that $\phi$ has $k$-\TP[2], as witnessed by $\psi$.

By \thref{nsop1-construct-kim-morley-sequence} we find an $\ind^K_M$-Morley sequence $(J_i)_{i < \omega}$ with $J_0 = J$. For $i < \omega$ we write $J_i = (c_{i,j})_{j < \omega}$. This yields an array $(c_{i,j})_{i,j < \omega}$ such that the following hold.
\begin{enumerate}[label=(\arabic*)]
\item For all $\sigma \in \omega^\omega$ the set $\{\phi(x, c_{i,\sigma(i)}) : i < \omega\}$ is consistent. First note that for any $i < \omega$ we have $c_{i,\sigma(i)} \equivls_M c_{i,0} \equivls_M c_{0, 0} = b$. So $(c_{i,\sigma(i)})_{i < \omega}$ is an $\ind^K_M$-independent sequence, all having the same Lascar strong type as $b$ over $M$, hence by the usual inductive application of compactness and \textsc{Independence Theorem} we get that $\{\phi(x, c_{i,\sigma(i)}) : i < \omega\}$ is consistent.
\item For all $i < \omega$ and $j_1 < \ldots < j_k < \omega$ we have $\models \psi(c_{i,j_1}, \ldots, c_{i,j_k})$. This follows because $J_i$ is an $M$-automorphic copy of $J_0 = J$.
\end{enumerate}
Any parameters from $M$ contained in $\phi$ or $\psi$ can be assumed to be part of the $c_{i,j}$'s, so we conclude that $\phi$ does indeed have $k$-\TP[2].
\end{proof}
\begin{remark}
\thlabel{rem:two-famous-theorems-comparison}
Compared to \thref{thm:first-order-op-iff-ip-or-sop} we replaced \SOP by \SOP[1] in \thref{thm:op-iff-ip-or-sop1}, which gives a weaker result. However, it is not even clear how the \SOP property should be formulated in positive logic, see also \thref{rem:sop}. In the \thref{thm:tp-iff-sop1-or-tp2} we replaced \TP[1] by \SOP[1], compared to \thref{thm:first-order-tp-iff-tp1-or-tp2}, again giving an a priori slightly weaker result. However, for full first-order logic Mutchnik's recent preprint \cite{mutchnik_nsop_2_2022} proves that \SOP[1] is equivalent to \TP[1], and it is very reasonable to expect the same thing in positive logic, see also \thref{q:sop1-iff-sop2}.

It is worth noting that the proofs of both theorems here are completely different from the classical proofs for \thref{thm:first-order-op-iff-ip-or-sop} and \thref{thm:first-order-tp-iff-tp1-or-tp2}. In particular, modulo Mutchnik's result, \thref{thm:tp-iff-sop1-or-tp2} gives a completely new proof of \thref{thm:first-order-tp-iff-tp1-or-tp2}.
\end{remark}
For the remainder of this section we shift our focus to the equivalent definitions of a simple theory.
\begin{theorem}
\thlabel{dividing-local-character-iff-ntp}
A theory $T$ is simple if and only if it does not have \TP.
\end{theorem}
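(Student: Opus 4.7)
The plan is to prove both directions by contraposition, leveraging \thref{cor:tp-iff-2-tp} to reduce \TP to $2$-\TP and \thref{psi-dividing} as the positive-logic reformulation of ``$k$-dividing''.

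For the ``\TP implies not simple'' direction I would fix a formula $\phi(x,y)$ witnessing $2$-\TP, together with its tree $(a_\eta)_{\eta \in \omega^{<\omega}}$ and negation $\psi(y_1,y_2)$ of $\exists x(\phi(x,y_1) \wedge \phi(x,y_2))$. Given an arbitrary regular cardinal $\kappa \geq \lambda_T$, compactness stretches this to a tree $(a_\eta)_{\eta \in \omega^{<\kappa^+}}$ with the same consistency along branches and $\psi$-inconsistency of siblings. An iterated application of \thref{erdos-rado-indiscernible-sequences} up the tree then refines it so that each sibling sequence $(a_{\eta^\frown i})_{i<\omega}$ is indiscernible over the initial segment $\{a_\mu : |\mu| \leq |\eta|\}$. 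Picking the constant-zero branch $\sigma$, I would realise its $\phi$-type by some $c$ and set $B = \{a_\eta : \eta \in \omega^{<\kappa^+}\}$. Any $B_0 \subseteq B$ of cardinality $\leq \kappa$ is contained in $\{a_\mu : |\mu| < \alpha\}$ for some $\alpha < \kappa^+$ by regularity; the sibling sequence $(a_{(\sigma|_\alpha)^\frown i})_{i<\omega}$ is then $B_0$-indiscernible and $\psi$-inconsistent, so \thref{psi-dividing} yields that $\phi(x, a_{\sigma|_{\alpha+1}}) \in \tp(c/B)$ $\psi$-divides over $B_0$, whence $\tp(c/B)$ divides over $B_0$. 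As $\kappa$ was arbitrary, local character fails and $T$ is not simple.

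For the converse, assume $T$ is not simple, so for every cardinal $\lambda$ (take $\lambda \geq 2^{|T|}$) there are a finite tuple $a$ and a parameter set $B$ with $\tp(a/B)$ dividing over every $B_0 \subseteq B$ of size $\leq \lambda$. Inductively construct an increasing continuous chain $(B_\alpha)_{\alpha < \lambda^+}$ of small subsets of $B$: at stage $\alpha$, \thref{psi-dividing} provides $\phi_\alpha(x,b_\alpha) \in \tp(a/B)$ and a negation $\psi_\alpha$ witnessing that $\phi_\alpha(x, b_\alpha)$ $\psi_\alpha$-divides over $B_\alpha$, and I set $B_{\alpha+1} = B_\alpha \cup \{b_\alpha\}$. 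Since $\lambda^+ > |T|$, the pigeonhole principle produces a cofinal set of $\alpha$'s on which $(\phi_\alpha,\psi_\alpha)$ equals a single fixed pair $(\phi,\psi)$ of arity $k$. The witnessing indiscernible sequences at these stages can then be woven into a tree $(a_\eta)_{\eta \in \omega^{<\omega}}$, with branch consistency maintained by transporting the realising tuple $a$ through automorphisms fixing the already-built portion of the tree; the resulting tree witnesses $k$-\TP for $\phi$ and $\psi$.

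The main obstacle is the tree-indiscernibility step in the forward direction: positive logic does not give a modelling theorem for trees for free, so the refinement has to be built up from \thref{erdos-rado-indiscernible-sequences} applied level by level while simultaneously preserving both consistency along branches and $\psi$-inconsistency of siblings. A related, pervasive subtlety is that positive logic forbids pointwise negation, so every appeal to inconsistency must be routed through an explicitly chosen negation $\psi$ -- this is why \thref{psi-dividing} is unavoidable, and why the pigeonhole in the converse direction must fix \emph{both} $\phi$ and $\psi$ rather than merely the formula $\phi$.
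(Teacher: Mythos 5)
Your right-to-left half (non-simplicity implies \TP) is in essence the paper's own argument, just reorganised: the paper builds the tree of height $|T|^+$ directly, invoking dividing at each successor stage over the set of previously chosen witness parameters (a small subset of $B$), keeps the invariant that every path is conjugate to the leftmost one, and only afterwards pigeonholes on the pair $(\phi,\psi)$ and passes to the subtree of the selected levels; your ``record a chain, pigeonhole, then weave by path-conjugating automorphisms'' is the same construction with the bookkeeping deferred, and your emphasis on fixing both $\phi$ and $\psi$ is exactly right.

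The left-to-right half, however, has a genuine gap at the step you yourself flag as the main obstacle. You stretch only the height, keeping the branching at $\omega$, and then claim that an ``iterated application of \thref{erdos-rado-indiscernible-sequences} up the tree'' makes every sibling family $(a_{\eta^\frown i})_{i<\omega}$ indiscernible over $\{a_\mu : |\mu| \le |\eta|\}$. This cannot get started: the lemma extracts an indiscernible sequence over a base $C$ only from a sequence of length $\beth_{(2^{|T|+|C|+\kappa})^+}$, whereas your sibling families have length $\omega$ and your base (all nodes of level $\le |\eta|$) has size up to $2^{\kappa}$; nor can pigeonhole produce even an infinite same-type subfamily, since there are in general far more types over that base than siblings. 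Even granting branching stretched far beyond this bound, what you are asking for is simultaneous indiscernibility of all sibling families over entire lower levels, i.e.\ a tree-modelling statement: the level-by-level refinement must replace each sibling family by a freshly extracted sequence and re-attach the subtrees above it by automorphisms, and since the iteration has length $\kappa^+$, at limit levels the provisional upper part of the tree has been discarded and must be re-established -- an argument you do not supply, and precisely the machinery you note positive logic does not give for free. The paper's proof shows none of this is needed: stretch the \emph{branching} to $(2^{|T|+\lambda^+})^+$ while taking as parameter set only a single branch, so that every $B_0$ of size $\le\lambda$ sits inside an initial segment of the branch; at each level a pigeonhole on types over the branch constructed so far yields infinitely many siblings realising one common type over that initial segment, and \thref{psi-dividing} requires only a sequence of realisations of the same type (not an indiscernible sequence), so this already gives dividing over every small subset. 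Rerouting your argument this way also lets you drop the reduction to $2$-\TP and the whole tree from the parameter set.
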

\begin{proof}
\underline{\TP $\implies$ failure of local character.}
Take any cardinal $\lambda$ and suppose that $\phi(x, y)$ has \TP. By compactness we can assume this is witnessed by $(c_\eta)_{\eta\in \kappa^{< \lambda^+}}$ for $\kappa = (2^{|T| + \lambda^+})^+$ and $\psi(y_1, \ldots, y_k)$.

We construct some $\sigma \in \kappa^{\lambda^+}$ by induction on its length (i.e.\ its domain). Suppose we have already defined $\sigma|_\gamma$ for some $\gamma < \lambda^+$. Write $C = \{c_{\sigma|_i} : i \leq \gamma\}$ and $\eta = \sigma|_{\gamma}$. Consider the set of types $\{\tp(c_{\eta^\frown i} / C) : i < \kappa\}$. There are at most $2^{|T| + \lambda^+}$ different types over $C$, so by our choice of $\kappa$ and the pigeonhole principle there exists infinite $I_\gamma \subseteq \kappa$ such that for any $i, j \in I_\gamma$ we have $\tp(c_{\eta^\frown i} / C) = \tp(c_{\eta^\frown j} / C)$. Let $i_0$ be the least element of $I_\gamma$ and define $\sigma(\gamma) = i_0$.

Now that the construction of $\sigma$ is finished we write $b_i = c_{\sigma|_i}$ for $i < \lambda^+$. With this notation, and using the fact that $(c_\eta)_{\eta\in \kappa^{< \lambda^+}}$ witnesses \TP for $\phi(x, y)$, we find some $a$ realising $\{\phi(x, b_i) : i < \lambda^+ \}$. We claim that $\tp(a / (b_i)_{i < \lambda^+})$ divides over every subset $B_0 \subseteq (b_i)_{i<\lambda^+}$ with $|B_0| \leq \lambda$. Suppose for a contradiction that $\tp(a / (b_i)_{i < \lambda^+})$ does not divide over some $B_0 \subseteq (b_i)_{i<\lambda^+}$, where $|B_0| \leq \lambda$. Let $\gamma < \lambda^+$ such that $B_0 \subseteq (b_i)_{i<\gamma}$. Then $\tp(a / (b_i)_{i < \lambda^+})$ does not divide over $(b_i)_{i<\gamma}$ by \textsc{Base-Monotonicity} (which holds for dividing in any theory, as already mentioned in the proof of \thref{thm:tp-iff-sop1-or-tp2}). We have $\phi(x, b_\gamma) \in \tp(a / (b_i)_{i<\lambda^+})$, hence by \thref{psi-dividing} it suffices to prove that $\phi(x, b_\gamma)$ $\psi$-divides over $(b_i)_{i<\gamma}$. Enumerate the set $I_\gamma$ from the construction of $\sigma$ as $i_0 < i_1 < \ldots$ and let $d_j = c_{{\sigma|_\gamma}^\frown i_j}$ for $j < \omega$. Note that $d_0 = b_\gamma$. Then by the construction of $I_\gamma$, $(d_j)_{j < \omega}$ is a sequence of realizations of $\tp(b_\gamma / (b_i)_{i<\gamma})$. Moreover, by \TP we also have $\models \psi (d_{j_1}, \ldots, d_{j_k})$ for all \mbox{$j_1 < \ldots < j_k < \omega$}. Therefore, $\tp(a/(b_i)_{i<\lambda^+})$ divides over $B_0$ and $T$ does not have local character.

\underline{Failure of local character $\implies$ \TP.}
Let $\lambda = |T|^+$. As local character for dividing fails there is some finite $a$ and a parameter set $B$ such that $\tp(a/B)$ divides over $B_0$ for every $B_0 \subseteq B$ with $|B_0| \leq \lambda$.

We construct a tree $(c_\eta)_{\eta \in \omega^{< \lambda}}$ by induction on its height. Let $\zeta_\alpha \in \omega^\alpha$ denote the constant zero function. As induction hypothesis for step $\delta$ we use three statements:
\begin{itemize}
    \item $c_{\zeta_\alpha}$ is a finite tuple of elements from $B$ for all $\alpha \leq \delta$;
    \item $(c_{\eta|_\alpha})_{\alpha \leq \delta} \equiv (c_{\zeta_\alpha})_{\alpha \leq \delta}$ for all $\eta \in \omega^\delta$;
    \item if $\delta = \gamma + 1$ is a successor then there are $\phi_\delta(x, y)$ and a negation $\psi_\delta(y_1, \ldots, y_{k_\delta})$ of $\exists x (\phi_\delta(x, y_1) \wedge \ldots \wedge \phi_\delta(x, y_{k_\delta}))$ such that for any $\eta \in \omega^\gamma$ and any $i_1 < \ldots < i_{k_\delta} < \omega$ we have $\models \psi_\delta(c_{\eta^\frown i_1}, \ldots, c_{\eta^\frown i_{k_\delta}})$.
\end{itemize}
For $\delta < \lambda$ limit or zero we let all $c_\eta$, where $\eta \in \omega^\delta$, be the empty tuple. Now suppose that we constructed $(c_\eta)_{\eta \in \omega^{\leq \delta}}$ and we need to construct level $\delta+1$. As $c_{\zeta_\alpha}$ is a finite tuple for all $\alpha \leq \delta$ and $\delta < \lambda$, we have that $|\{c_{\zeta_\alpha} : \alpha \leq \delta \}| < \lambda$ and so $\tp(a/B)$ divides over $\{c_{\zeta_\alpha} : \alpha \leq \delta \}$. By \thref{psi-dividing} there exists formulas $\phi_{\delta+1}(x, d) \in \tp(a/B)$ and a negation $\psi_{\delta+1}(y_1, \ldots, y_{k_{\delta+1}})$ of $\exists x (\phi_{\delta+1}(x, y_1) \wedge \ldots \wedge \phi_{\delta+1}(x, y_{k_{\delta+1}}))$ together with a sequence $(d_i)_{i< \omega}$ such that $d_i \equiv_{(c_{\zeta_\alpha})_{\alpha \leq \delta}} d$ for all $i < \omega$ and for any $i_1 < \ldots < i_{k_{\delta+1}} < \omega$ we have $\models \psi_{\delta+1}(d_{i_1}, \ldots, d_{i_{k_{\delta+1}}})$. By an automorphism we may assume $d_0 = d$. Note that $d_0$ is a finite tuple of elements from $B$. We start by defining $c_{\zeta_\delta^\frown i}$ to be $d_i$ for all $i < \omega$. Since $d_i \equiv_{(c_{\zeta_\alpha})_{\alpha \leq \delta}} d_j$ for all $i, j < \omega$, we have $(c_{{\zeta_\delta^\frown i}|_\alpha})_{\alpha \leq \delta+1} \equiv (c_{\zeta_\alpha})_{\alpha \leq \delta+1}$. Now take any $\eta \in \omega^\delta$. We know by induction hypothesis that $(c_{\eta|_\alpha})_{\alpha \leq \delta} \equiv (c_{\zeta_\alpha})_{\alpha \leq \delta}$. Let $(c_{\eta^\frown i})_{i < \omega}$ be such that $(c_{\eta^\frown i})_{i < \omega} (c_{\eta|_\alpha})_{\alpha \leq \delta} \equiv (d_i)_{i < \omega} (c_{\zeta_\alpha})_{\alpha \leq \delta}$. Then the induction hypothesis holds by construction. Moreover, since for any $i_1 < \ldots < i_{k_{\delta+1}} < \omega$ we have $\models \psi_{\delta+1}(d_{i_1}, \ldots, d_{i_{k_{\delta+1}}})$, we now also get $\models \psi_{\delta+1}(c_{\eta^\frown i_1}, \ldots, c_{\eta^\frown i_{k_{\delta+1}}})$. This completes the inductive construction of the tree $(c_\eta)_{\eta \in \omega^{< \lambda}}$.

There are $|T|$ possible pairs of formulas $\phi(x,y)$ and $\psi(y_1, \ldots, y_k)$ but we have $\lambda = |T|^+$ successor levels, and each is assigned a pair $\phi_\delta(x,y)$ and $\psi_\delta(y_1, \ldots, y_{k_\delta})$. Hence, by pigeonhole principle we can choose an infinite set of successor levels $l_0 < l_1 < l_2 < \ldots$ having the same $\phi_\delta$ and $\psi_\delta$. We denote these just as $\phi(x,y)$ and $\psi(y_1, \ldots, y_k)$. We consider a subtree $(f_\mu)_{\mu \in \omega^{< \omega}}$ that consists only of the chosen levels (with the root being the leftmost point on level $l_0$). That is, for $\mu \in \omega^{< \omega}$ of length $n$ we define $\eta_\mu \in \omega^{l_n}$ of length $l_n$ as 
\[
\eta_\mu(l) =
    \begin{cases}
        \mu(i) & \text{if } l = l_{i+1} - 1\\
        0 & \text{otherwise.}
    \end{cases}
\]
Note that $l_{i+1} - 1$ makes sense, because we only chose successor levels. Let $f_\mu = c_{\eta_\mu}$.

We claim $(f_\mu)_{\mu \in \omega^{< \omega}}$ and $\psi$ witness \TP for $\phi$. Let $\sigma \in \omega^\omega$. By construction $\{\phi (x, f_{\zeta_n}) : n < \omega\} \subseteq \tp(a/B)$, and so this set is consistent. Then since $(f_{\sigma|_n})_{n < \omega} \equiv (f_{\zeta_n})_{n < \omega}$ we get that $\{\phi (x, f_{\sigma|_n}) : n < \omega\}$ is consistent. Finally take any $\mu \in \omega^{< \omega}$ and $i_1 < \ldots < i_k < \omega$. The elements $f_{\mu^\frown i_1}, \ldots, f_{\mu^\frown i_k}$ are equal to $c_{\eta^\frown i_1}, \ldots, c_{\eta^\frown i_k}$ for some $\eta \in \omega^{< \lambda}$. Hence, by construction of the subtree, we get $\models \psi(f_{\mu^\frown i_1}, \ldots, f_{\mu^\frown i_k})$. We conclude that $\phi$, and thus $T$, has \TP.
\end{proof}
\begin{remark}
\thlabel{rem:local-character-dividing-vs-forking}
In \cite{ben-yaacov_simplicity_2003} simplicity of a theory is defined as dividing having local character, as we did here. In \cite{pillay_forking_2000} simplicity is defined as forking having local character. Note that trivially local character of forking implies local character of dividing. In thick theories the converse is true: by \cite[Theorem 1.15]{ben-yaacov_thickness_2003} non-dividing satisfies \textsc{Extension} and so dividing coincides with forking.

Without the thickness assumption the converse can fail. By \cite[Example 4.3]{ben-yaacov_simplicity_2003} there is a stable positive theory $T$ with a type over the empty set that forks over the empty set.

The above example motivates our choice of terminology for simplicity, because if we defined simplicity in terms of local character for forking then stability would not imply simplicity. Furthermore, the fact that local character of dividing is equivalent to \NTP (\thref{dividing-local-character-iff-ntp}) does not need thickness. So the $T$ from above is an example of an \NTP theory where local character for forking fails, further motivating our choice of terminology.
\end{remark}

\section{Further discussion and open questions}
\label{sec:discussion-and-open-questions}
In light of the recent \cite{mutchnik_nsop_2_2022}, where it is shown that \SOP[1] is equivalent to \SOP[2] for theories in full first-order logic, the following is a natural question.
\begin{question}
\thlabel{q:sop1-iff-sop2}
Is \SOP[1] equivalent to \SOP[2] in positive logic? As Mutchnik's proof \cite{mutchnik_nsop_2_2022} makes heavy use of various notions of independence, and these tend to work better in thick theories \cite{dobrowolski_kim-independence_2022, ben-yaacov_thickness_2003}, it would be natural to assume thickness in order to answer this question.
\end{question}
\begin{question}
\thlabel{q:sop3-iff-sop2}
Is \SOP[3] equivalent to \SOP[2], and so, if \thref{q:sop1-iff-sop2} has a positive answer, also equivalent to \SOP[1]?
\end{question}
\begin{remark}
\thlabel{q:2-tp2-iff-k-tp2}
In full first-order logic we have that if $\phi(x, y)$ has $k$-\TP[2] for some $k \geq 2$ then some conjunction $\bigwedge_{i = 1}^n \phi(x, y_i)$ has $2$-\TP[2] \cite[Propostion 5.7]{kim_tree_2014}. The proof for this makes use of array-indiscernibles and array-modelling. The development of these tools is out of the scope of this paper, but it is done in a recent preprint \cite{kamsma_positive_2023} by the third author. In particular, \cite[Theorem 1.4]{kamsma_positive_2023} proves the above statement for thick theories. This implies in particular that a thick theory has $2$-\TP[2] iff it has $k$-\TP[2] for some $k \geq 2$.
\end{remark}
\begin{remark}
\thlabel{rem:sop}
In this work we left out the \SOP[n] hierarchy for $n \geq 4$, as well as the \emph{strict order property} \SOP. It is not clear what would be the right definition of these properties in positive logic. The combinatorial properties we have considered all have a similar form: there is some combinatorial configuration of parameters and we require a formula to be consistent along certain parts of those parameters, while being inconsistent along other parts. The only change for positive logic is then that we require this inconsistency to be uniformly witnessed by some negation. However, \SOP[\geq 4] and \SOP are defined in a different way and, unlike \SOP[3], there is no known equivalent formulation of the above form.
\end{remark}
\begin{remark}
\thlabel{rem:nip}
The first work to consider the independence property \IP in positive logic is \cite{dobrowolski_amalgamation_2023}. There some basics for positive \NIP theories are developed, such as closure of \NIP formulas under conjunctions and disjunctions and the fact that one can swap the roles of the variables. On the level of theories they also prove that to verify that a theory is \NIP one only needs to check the formulas $\phi(x, y)$ where $y$ is a single variable (as opposed to a tuple of variables).
\end{remark}
\begin{remark}
\thlabel{rem:preservation-of-combinatorial-properties-under-hyperimaginaries}
In positive logic we can add hyperimaginaries (e.g.\ the $(-)^\text{heq}$-construction) in the same way we can add imaginaries (e.g.\ the $(-)^\text{eq}$-construction) in full first-order logic, see \cite[Subsection 10C]{dobrowolski_kim-independence_2022} for details. In \cite[Theorem 10.18]{dobrowolski_kim-independence_2022} it is proved that whether a theory is \SOP[1] or \NSOP[1] is preserved under such hyperimaginary extensions. As is remarked there as well, the proof strategy should go through for any of the combinatorial properties discussed in this paper. For \NIP the details of this are verified in \cite[Proposition 6.22]{dobrowolski_amalgamation_2023}.
\end{remark}

% References
\bibliographystyle{alpha}
\bibliography{bibfile}

\end{document}